\let\realItem\item 
\NewDocumentCommand\myItem{ o }{%
   \IfNoValueTF{#1}%
      {\realItem}
      {\realItem[#1]\def\@currentlabel{#1}}
}
\setlist[enumerate]{
    before=\let\item\myItem,       
    label=\textnormal{(\arabic*)}, 
    widest=(2')                    
}
\def\namedlabel#1#2{\begingroup
	#2%
	\def\@currentlabel{#2}%
	\phantomsection\label{#1}\endgroup
}
\DeclareMathOperator{\dv}{div}
\DeclareMathOperator{\loc}{loc}
\newcommand{\RR}{\mathbb{R}}
\newcommand{\mA}{\mathcal{A}}
\newcommand{\Om}{\Omega}
\newcommand{\na}{\nabla}
\newcommand{\pa}{\partial}
\newcommand{\La}{\Lambda}
\newcommand{\al}{\alpha}
\newcommand{\ep}{\epsilon}
\newcommand{\la}{\lambda}
\newcommand{\sig}{\sigma}
\newcommand{\cv}{\kappa}
\newcommand{\law}{\lambda_w}
\newcommand{\lav}{\lambda_v}
\newcommand{\data}{\mathit{data}}
\newcommand{\mQ}{U}
\newcommand{\cc}{\textbf{c}_c}
\newcommand{\ccc}{\textbf{c}_{cc}}
\newcommand{\ca}{c_a}
\newcommand{\Caa}{\mathcal{C}^{\alpha,\frac\alpha 2}}
\definecolor{darkgreen}{rgb}{0.00, 0.50, 0.00}
\theoremstyle{plain}
\newtheorem{theorem}{Theorem}[section]
\newtheorem{lemma}[theorem]{Lemma}
\newtheorem{definition}[theorem]{Definition}
\newtheorem{proposition}[theorem]{Proposition}
\newtheorem{remark}[theorem]{Remark}
\def\Xint#1{\mathchoice
	{\XXint\displaystyle\textstyle{#1}}%
	{\XXint\textstyle\scriptstyle{#1}}%
	{\XXint\scriptstyle\scriptscriptstyle{#1}}%
	{\XXint\scriptstyle\scriptscriptstyle{#1}}%
	\!\int}
\def\XXint#1#2#3{{\setbox0=\hbox{$#1{#2#3}{\int}$}
		\vcenter{\hbox{$#2#3$}}\kern-.5\wd0}}
\def\Yint#1{\mathchoice
	{\YYint\displaystyle\textstyle{#1}}%
	{\YYint\textstyle\scriptstyle{#1}}%
	{\YYint\scriptstyle\scriptscriptstyle{#1}}%
	{\YYint\scriptscriptstyle\scriptscriptstyle{#1}}%
	\!\iint}
\def\YYint#1#2#3{{\setbox0=\hbox{$#1{#2#3}{\iint}$}
		\vcenter{\hbox{$#2#3$}}\kern-.51\wd0}}
\def\longdash{{-}\mkern-3.5mu{-}} 
\def\fiint{\Yint\longdash}
\def\Xint#1{\mathchoice
	{\XXint\displaystyle\textstyle{#1}}%
	{\XXint\textstyle\scriptstyle{#1}}%
	{\XXint\scriptstyle\scriptscriptstyle{#1}}%
	{\XXint\scriptscriptstyle\scriptscriptstyle{#1}}%
	\!\int}
\def\XXint#1#2#3{{\setbox0=\hbox{$#1{#2#3}{\int}$ }
		\vcenter{\hbox{$#2#3$ }}\kern-.6\wd0}}
\def\dashint{\Xint-}
\let\orgdescriptionlabel\descriptionlabel
\renewcommand*{\descriptionlabel}[1]{%
	\let\orglabel\label
	\let\label\@gobble
	\phantomsection
	\edef\@currentlabel{#1}%
	\let\label\orglabel
	\orgdescriptionlabel{#1}%
}
\numberwithin{equation}{section}
\author{Iwona Chlebicka}
\address[Iwona Chlebicka]{University of Warsaw, ul. Banacha 2, 02-097 Warsaw, Poland}
\email[Corresponding author]{i.chlebicka@mimuw.edu.pl}
\author{Prashanta Garain}
\address[Prashanta Garain]{Department of Mathematical Sciences,
Indian Institute of Science Education and Research Berhampur,
Berhampur, Permanent Campus, At/Po:-Laudigam,
Dist.-Ganjam, Odisha, India-760003}
\email{pgarain92@gmail.com}
\author{Wontae Kim}
\address[Wontae Kim]{Korea Institute for Advanced Study, 5 Hoegi-ro, Dongdaemun-gu, Seoul 02455, Republic of Korea}
\email{wontae@kias.re.kr}
\def\Xint#1{\mathchoice
    {\XXint\displaystyle\textstyle{#1}}%
    {\XXint\textstyle\scriptstyle{#1}}%
    {\XXint\scriptstyle\scriptscriptstyle{#1}}%
    {\XXint\scriptscriptstyle\scriptscriptstyle{#1}}%
    \!\int}
\def\XXint#1#2#3{\setbox0=\hbox{$#1{#2#3}{\int}$}
    \vcenter{\hbox{$#2#3$}}\kern-0.5\wd0}
\def\fint{\Xint-}
\def\dashint{\Xint{\raise4pt\hbox to7pt{\hrulefill}}}
\def\XXiint#1#2#3{\setbox0=\hbox{$#1{#2#3}{\iint}$}
    \vcenter{\hbox{$#2#3$}}\kern-0.5\wd0}
\begin{document}
	\thanks{I.C. is supported by grant funded by National Science Centre, Narodowe Centrum Nauki,  2019/34/E/ST1/00120. W.K. has been supported by KIAS Individual Grant (HP105501)}
    \thanks{Data sharing not applicable to this article as no datasets were generated or analysed during the current study.}
\title[Gradient higher integrability for double-phase systems]{Gradient higher integrability\\ of bounded solutions\\ to parabolic double-phase systems}

\everymath{\displaystyle}

\makeatletter
\@namedef{subjclassname@2020}{\textup{2020} Mathematics Subject Classification}
\makeatother

\begin{abstract} We prove that bounded solutions  to degenerate parabolic double-phase problem modelled upon
\[u_t-\dv(|\na u|^{p-2}\na u+a(x,t)|\na u|^{q-2}\na u)=-\dv(|F|^{p-2}F+a(x,t)|F|^{q-2}F)\,, \]
where a nonnegative weight $a$ is $\alpha$-H\"older continuous in space and $\tfrac \alpha 2$-H\"older continuous in time, have locally higher integrable gradients for the sharp range of exponents $p<q\le p+\alpha$. 
\end{abstract}

\keywords{Parabolic double-phase systems, parabolic $p$-Laplace systems, gradient estimates}
\subjclass[2020]{35D30, 35K55, 35K65}
\maketitle

\section{Introduction} Parabolic equations with double-phase growth represent a natural parabolic counterpart of models describing materials with heterogeneous hardening, composite media, or diffusion processes with switching intensities. Mathematical description of them have attracted considerable attention over the past decade~\cite{mira,C-b,H-book}. Even for strongly nonlinear problems, one typically expects solutions to exhibit regularity beyond that guaranteed by mere membership in the energy space; see, for instance,~\cite{ElMe,Str,KiLe-Duke}. Recently, there has been remarkable progress in the regularity theory for parabolic double-phase problems, such as \[u_t-\dv(|\na u|^{p-2}\na u+a(x,t)|\na u|^{q-2}\na u)=-\dv(|F|^{p-2}F+a(x,t)|F|^{q-2}F)\,. \] In a short time, it has led to several deep and influential regularity results including~\cite{WMS,WS,sen2025lipschitzregularityparabolicdouble,Sen2025,MR4926910,K-CZ,KKM,KIM2025110738, buskr,ssy}. Some results in a more refined framework are also available, 
 see \cite{oh2025gradient,MR4926910,MR4774133,Sen2025}. Nevertheless, the theory is still far from being fully understood.  
 
 In this work, we focus on gradient higher integrability, which plays a decisive role in the analysis of finer regularity of weak solutions. Solutions to double phase problems are expected be regular provided the closeness condition on the exponents is controlled by the regularity of the weight $a$, which broadens under a priori knowledge about the regularity of $u$. This is observed in the elliptic case~\cite{comi-bdd,bacomi-cv,ok-na}, but its parabolic counterpart remains largely unexplored. We study the regularity to parabolic double phase problems for a priori bounded solutions, where the evolution brings deep complications absent in the elliptic situation. Although related results are available for problems of similar type~\cite{BoDu-nonst,DiBFr,KiLe-Duke,KKM}, the structure of the system considered here prevents an application of the existing techniques. What is more, once we finished the first draft of our manuscript we learned about~\cite{kim2025boundedsolutionsinterpolativegap} where the method cannot embrace the presence of the non-zero right-hand side.   Henceforth, we deliver a substantially new approach.

Let us present in detail our result. We shall consider weak solutions to the parabolic double-phase system
\begin{align}\label{eq}
	u_t-\dv\mA(z,\na u)=-\dv(|F|^{p-2}F+a(z)|F|^{q-2}F)\quad \text{in $\Omega_T=\Omega\times(0,T)$\,,}
\end{align}
 where $\Omega$ is a bounded domain in $\mathbb{R}^n$, $n\geq 2$, and $T>0$. Here, 
we assume $\mA(z,\na u):\Omega_T\times \RR^{Nn}\longrightarrow \RR^{Nn}$ with $N\ge1$ is a Carath\'eodory vector field satisfying the following structure assumptions: there exist constants $0<\nu\le L<\infty$ such that
	\begin{align}\label{str}
	    \mA(z,\xi)\cdot \xi\ge \nu(|\xi|^p+a(z)|\xi|^q)
        \quad\text{and}\quad
			|\mA(z,\xi)|\le L(|\xi|^{p-1}+a(z)|\xi|^{q-1}),
	\end{align}
for almost every $z\in \Omega_T$ and every $\xi\in \RR^{Nn}$. Throughout the rest of the paper, we use the notation for $z\in\Omega_T$ and $s\geq 0$ \begin{equation}
    \label{H-def}
H(z,s)=s^p+a(z) s^q\,.
\end{equation}

We focus of gradient higher integrability of the solutions in the spirit of \cite{KiLe-Duke}. We prove it for a priori bounded $u$, $a \geq 0$ and $a\in \Caa(\Omega_T)$ for some $\al \in (0,1]$ and 
\begin{align} \label{range_pq}
2\le p <\infty, \quad p<q\le p+\alpha\,.    
\end{align}
Here $a\in  \Caa(\Omega_T)$ means that $a\in L^{\infty}(\Omega_T)$ and there exist constants $\ca>0$, such that
\begin{align}
	\label{def_holder}
		|a(x,t)-a(y,s)|\le c_a\left(\max\{ |x-y|^\alpha,|t-s|^\frac{\alpha}{2} \}\right)
\end{align}
for every $(x,y)\in\Omega$ and $(t,s)\in (0,T)$.

The main result of this paper is the following higher integrability estimate for the gradient of a weak solution to \eqref{eq} assuming that the forcing term $H(z,|F|)\in L^\gamma$,
where $\gamma=\tfrac{n+p}{p}$. We denote the constant $c=c(\data)$ if $c$ depends on the following
  \begin{align*}
  \begin{split}
      \mathit{data}=&(n,N,p,q,\nu,L,\ca,\|u\|_{L^\infty(\Om_T)},\|H(z,|F|)\|_{L^\gamma(\Om_T)})\,.
  \end{split}
\end{align*} Now we state our main result.

\begin{theorem}\label{main_theorem}
	Let $u\in C(0,T;L^2(\Om,\RR^N))\cap L^q(0,T;W^{1,q}(\Om,\RR^N))\cap L^\infty(\Omega_T)$ be a weak solution to \eqref{eq} in $\Omega_T$. 
	There exist constants $\ep_0=\ep_0(\mathit{data})$ and $c=c(\mathit{data} , \|a\|_{L^\infty(\Om_T)})$ such that for every $Q_{4r}(z_0)\subset\Om_T$ with $r\in (0,1)$ and $\ep\in(0,\ep_0)$,
	\begin{align*}
    \begin{split}
        &\iint_{Q_{r}(z_0)}H(z,|\na u|)^{1+\epsilon}\,dz\\
	&\le c\left(\left( \frac{\|u\|_{L^\infty(\Om_T)}}{r} \right)^p+\|a\|_{L^\infty(\Om_T)}\left( \frac{\|u\|_{L^\infty(\Om_T)}}{r} \right)^q +1\right)^{1+\frac{q\epsilon}{p}}\\
    &\qquad+ c\left( \fiint_{Q_{4r}(z_0)}H(z,|F|)^{1+\ep} \,dz\right)^{1+\frac{q}{2}}\,.
    \end{split}
\end{align*} 
\end{theorem}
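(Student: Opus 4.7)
My strategy would be a Gehring-type self-improvement based on a reverse Hölder inequality for $H(z,|\na u|)$ on carefully chosen intrinsic parabolic cylinders, followed by integration over level sets. The argument splits into (i) a Caccioppoli energy estimate adapted to the double-phase scaling and to bounded solutions, (ii) a stopping-time construction of intrinsic cylinders on which a dichotomy between a $p$-phase and a $(p,q)$-phase regime can be performed, and (iii) the passage to higher integrability through a parabolic Vitali covering.

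First I would derive, via the test function $(u-(u)_Q)\eta^2$ with a parabolic cutoff $\eta$ and Steklov averaging for the time derivative, a Caccioppoli inequality of the form
\[
\sup_{t}\fint_{B}|u-(u)_Q|^2\,dx+\fiint_{Q}H(z,|\na u|)\,dz\le c\fiint_{2Q}H\!\left(z,\tfrac{|u-(u)_Q|}{\rho}\right)dz+c\fiint_{2Q}H(z,|F|)\,dz.
\]
The $L^\infty$-bound enters here in the form $|u-(u)_Q|\le 2\|u\|_{L^\infty(\Om_T)}$ and will be crucial to absorb the $q$-phase right-hand side. Next, on intrinsic cylinders $Q_\rho^\la(z_0)=B_\rho(x_0)\times(t_0-\la^{2-p}\rho^2,t_0)$, possibly rescaled by the additional factor $(1+a(z_0)\la^{q-p})^{-1}$ to handle both phases simultaneously, I would set up a stopping-time argument: for every level $\la$ above a threshold $\la_0$ depending on $\|u\|_{L^\infty(\Om_T)}/r$, $\|a\|_{L^\infty(\Om_T)}$ and the forcing, each Lebesgue point $z\in\{H(z,|\na u|)>\la\}$ is placed inside a maximal intrinsic cylinder whose average equals $\la$. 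A dichotomy then distinguishes the $p$-intrinsic regime, where $a(z_0)\la^{q/p}$ is small and one uses only the $p$-phase, from the $(p,q)$-intrinsic regime, where $a\approx a(z_0)$ on $Q$ and the weight is frozen. The assumption $q\le p+\al$ is precisely what makes this effective, since on the stopping cylinder
\[
\osc_{Q}a\le c_{a}\rho^{\al}\quad\text{and}\quad \rho^{q-p}|\na u|^{q-p}\le c\,\|u\|_{L^\infty(\Om_T)}^{q-p},
\]
so the oscillation of $a$ against $|\na u|^q$ can be absorbed into the $p$-phase up to a constant controlled by $\|u\|_{L^\infty(\Om_T)}$.

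The principal obstacle, in my view, is the simultaneous balancing of three incommensurable scales: the intrinsic time scale $\la^{2-p}\rho^2$, the weight-oscillation scale $\rho^\al$, and the amplitude $\|u\|_{L^\infty(\Om_T)}$; the stopping-time selection must be compatible with all three. The presence of the non-zero right-hand side $F$ compounds this, since $H(z,|F|)$ must be tracked through the Caccioppoli-to-reverse-Hölder conversion with $a$ frozen at the cylinder centre, which is what forces the dependence on $\|a\|_{L^\infty(\Om_T)}$ in the constant $c$ while leaving the exponent $\ep_0$ to depend only on $\data$. Once the reverse Hölder inequality $\fiint_Q H(z,|\na u|)\,dz\le c\left(\fiint_{2Q} H(z,|\na u|)^\theta\,dz\right)^{1/\theta}+c\fiint_{2Q} H(z,|F|)\,dz$ is established for some $\theta\in(0,1)$ on all selected cylinders, a parabolic Vitali covering together with the standard level-set integration-in-$\la$ argument yields the stated gradient higher integrability with the announced scaling in $\|u\|_{L^\infty(\Om_T)}/r$ and in the forcing.
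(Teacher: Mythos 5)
Your roadmap mirrors the paper's: a Caccioppoli estimate exploiting $|u-(u)_Q|\le 2\|u\|_{L^\infty}$, a stopping-time construction with a $p$-intrinsic versus $(p,q)$-intrinsic dichotomy, a reverse H\"older inequality on intrinsic cylinders, and a Vitali covering with level-set integration. However, two of the steps you sketch would not close as written, and a third key mechanism is missing.

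First, the absorption step $\rho^{q-p}|\na u|^{q-p}\le c\,\|u\|_{L^\infty(\Om_T)}^{q-p}$ is not a valid pointwise bound: boundedness of $u$ does not control $|\na u|$ pointwise by $\|u\|_\infty/\rho$. The paper instead absorbs the $q$-phase on the right-hand side of the Caccioppoli inequality, where the offending quantity is $a(z)\,|u-(u)_Q|^q/\rho^q$, and there the $L^\infty$ bound applies directly. For the $a(z)|\na u|^q$ piece on the left side of the reverse H\"older estimate, the paper relies on a separate \emph{decay} estimate (Lemma~\ref{lemma_decay}), namely $\rho\le \cc\la^{-1}$ on every $p$-intrinsic stopping cylinder, which combined with the $p$-intrinsic condition $K\ge(\|u\|_\infty/\rho)^{q-p}\sup a$ yields $\sup a\le \cc^q\|u\|_\infty^{q-p}\la^{p-q}$ (Lemma~\ref{p_intr}); this is what recovers the usual phase criterion and lets the $q$-energy be dominated by the $p$-energy. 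Your sketch has no analogue of this decay inequality, and it is precisely the ingredient that makes the argument survive the nonzero forcing term.

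Second, and related: your stopping time selects cylinders on which the average of $H(z,|\na u|)$ alone equals the level, whereas the paper averages $H(z,|\na u|)+H(z,|F|)$ (conditions \textit{(p-2)} and \textit{(p,q-2)}). Without $H(z,|F|)$ inside the exit-time quantity, the term $\fiint H(z,|F|)\,dz$ that appears in the Caccioppoli estimate cannot be compared to $\la^p$ (resp.\ $\La$) on the stopping cylinder, and the reverse H\"older bound does not self-improve. This is the ``key point'' emphasised in the Methods paragraph of the paper. Finally, you invoke ``a parabolic Vitali covering'' as standard, but since the intrinsic time scales $\la^{2-p}_w$ vary with the stopping point $w$, a direct application of the standard Vitali lemma fails; the paper needs the comparability Lemma~\ref{la_comp_lem} (requiring both the H\"older continuity of $a$ and the decay lemma again) to produce a scaled covering constant $\cv$. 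These are not cosmetic refinements: each is needed to make the scheme you describe actually run.
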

For the rest of the paper, whenever we say that $u$ is a weak solution of \eqref{eq} in $\Omega_T$, we assume that $u\in C(0,T;L^2(\Om,\RR^N))\cap L^q(0,T;W^{1,q}(\Om,\RR^N))\cap L^\infty(\Omega_T)$. See Section~\ref{ssec:sol} for more comments.

\begin{remark}[Sharpness]\rm 
    The question on the optimality of a range of parameters for multiple regularity results, was a topic studied in depth on elliptic problems~\cite{Zhikov86,eslemi}, providing that under a priori boundedness assumption~\eqref{range_pq} is actually sharp, cf.~\cite{comi,comi-bdd,bacomi-cv,fomami,bcdfm}. From this point of view, the current paper is a natural spin-off of~\cite{WMS}, where  gradient higher integrability of the solutions to~\eqref{eq} was proven under no extra a priori regularity of a solution itself.
\end{remark}

{\bf Methods.} The main idea is typical for the double phase problems -- we use the phase separation to regions when the contribution of $a$ is small or dominating. As much as employing the exit time reasoning is expected, a key point is to exploit a delicate relation between the radii of the exit time balls and the level at which we perform the exit time argument, where the stopping time depends on the energy of $\nabla u$ and of $F$. We face several new challenges in the proof of reverse H\"older-type result (in Section~\ref{sec:rev-hold}) comparing to the previous analysis. In fact, the phase separation in  \cite{KKM}, i.e., in the case of non a priori bounded solutions, relied on the  comparison of two values $\la^p$ and $a(z_0)\la^q$ by using just the energy of the gradient of the solutions. In contrast, the phase analysis here is formulated on integrated quantities, allowing us to absorb the $q$-phase into the $p$-phase inside appropriate intrinsic cylinders and vice versa. Then the energy control (including the Caccioppoli type inequality) can be considered as perturbation corresponding estimates known for the $p$-Laplace systems.   We shall point out that the analysis requires delicate interplay between the homogeneity of the scaling factors of cubes and their radii.

A major challenge is obtaining a variant of reverse H\"older inequality within the intrinsic and locally varying scaling, that forms the analytic backbone for the higher integrability proved in Theorem~\ref{main_theorem}. We prove the reversed H\"older inequality  adapted to the intrinsic geometry distinguishing {\it $p$-intrinsic case}, when our problem is a perturbed $p$-Laplace evolution and a perturbation is controlled by  $\|u\|_{L^\infty}$. Namely, for some $K=K(\data)>1$ and relevant cube $Q_\rho$ it holds
\[K\geq (\|u\|_{\infty}/\rho)^{q-p}\sup_{cQ_{\rho}} a\,.\]
On the other hand, when this condition fails, we deal with the {\it $(p,q)$-intrinsic case}, when our problem resembles the $q$-Laplace problem. The reversed H\"older inequality is provided  for $p$-phase in Lemma~\ref{p_reverse_lem} and for $(p,q)$-phase in Lemma~\ref{q_reverse_lem}. 

This separation is effective in the presence of $F$ under the sharp range of phase parameters~\eqref{range_pq}. The proof is possible due to Lemma~\ref{lemma_decay} yielding a key decay property enabling the datum $F$ to be nonzero.

The arguments are closed with use of the Vitali covering theorem, see Lemma~\ref{vitali_lem}, and consequences of the reverse H\"older inequality provided for  $p$-phase in Proposition~\ref{p_est_vitali} and for $(p,q)$-phase in Proposition~\ref{q_est_vitali}. \textcolor{black}{Let us stress that since intrinsic geometries may vary from point to point, the standard Vitali covering lemma cannot be directly applied. The required comparability of the scaling factors is ensured by the Hölder continuity of the coefficient $a$ in Lemma~\ref{la_comp_lem}, which allows us to establish a modified Vitali covering lemma with a scaled covering constant.} The admissible range $q\le p+\alpha$ is precisely what guarantees that the oscillation of $a(\cdot)$ over the cylinders is below the threshold needed for the proof to be valid, see Section~\ref{stopping time argument}.\newline

{\bf Organization. } In Section~\ref{sec:prelim} we present notation and basic definitions. Section~\ref{sec:energy-estimates} provides a priori estimates, while Section~\ref{sec:rev-hold} is devoted to the reverse H\"older estimate. The main proof is concluded in Section~\ref{sec:main-proof}.\newline

\section{Preliminaries}\label{sec:prelim}
We introduce the following notation that will be used throughout this paper.
\subsection{Notation}
We denote a point in $\RR^{n+1}$ as $z=(x,t)$, where $x\in \RR^n$ and $t\in \RR$.
A ball with center $x_0\in\RR^n$ and radius $\rho>0$ is denoted as
\[
    B_\rho(x_0):=\{x\in \RR^n:|x-x_0|<\rho\}\,.
\]
Parabolic cylinders with center $z_0=(x_0,t_0)$ and quadratic scaling in time are denoted as
\[
    Q_\rho(z_0):=B_\rho(x_0)\times I_\rho(t_0)\,,\quad \text{where }\ I_\rho(t_0):=(t_0-\rho^2,t_0+\rho^2)\,.
\]
In this paper, we use two types of intrinsic cylinders. For $\la\geq1$ and $\rho > 0$, a $p$-intrinsic cylinder centered at $z_0=(x_0,t_0)$ is 
\begin{align*}
	Q_\rho^\la(z_0):= 
     B_\rho(x_0)\times I^\la_{\rho}(t_0)\,,\quad \text{where }\ I^\la_\rho(t_0) := I_{\la^{\frac{2-p}{2}}\rho}(t_0)\,, 
\end{align*}
and a $(p,q)$-intrinsic cylinders centered at $z_0=(x_0,t_0)$ is
\begin{align*}
    G_{\rho}^\la(z_0):=
        B_{\rho}(x_0)\times J_{\rho}^\la(t_0)\,, \quad \text{where }\ 
J_\rho^{\la}(t_0):=I_{\la (H(z_0,\la))^{-1}\rho}(t_0)\,
\end{align*}
for $H$ given by~\eqref{H-def}. For $c>0$, we write
\[
    cQ_\rho^\la(z_0)=Q_{c\rho}^\la(z_0)
    \quad\text{and}\quad 
    cG_\rho^\la(z_0)=G_{c\rho}^\la(z_0)\,.
\]
We also consider parabolic cylinders with arbitrary scaling in time and denote
 \[
     Q_{R,\ell}(z_0):=B_R(x_0)\times I_{\ell}(t_0),\quad R,\ell>0\,.
 \]
 
 The $(n+1)$-dimensional Lebesgue measure of a set $E\subset\RR^{n+1}$ is denoted as $|E|$.
For $f\in L^1(\Omega_T,\RR^N)$ and a measurable set $E\subset\Om_T$ with $0<|E|<\infty$, we denote the integral average of $f$ over $E$ as
\[
	(f)_{E}:=\frac{1}{|E|}\iint_{E}f\,dz=\fiint_{E}f\,dz\,.
\]
\subsection{Auxiliary results}
\begin{lemma}\label{tech_lem} 
	Let $0<r<R<\infty$ and $h:[r,R]\longrightarrow\RR$ be a non-negative and bounded function. Suppose there exist $\vartheta\in(0,1)$, $A,B\ge0$ and $\gamma>0$ such that
	\[
		h(r_1)\le \vartheta h(r_2)+\frac{A}{(r_2-r_1)^\gamma}+B
		\quad\text{for all}\quad
		0<r\le r_1<r_2\le R\,.
	\]
	Then there exists a constant $c=c(\vartheta,\gamma)$, such that
	\[
		h(r)\le c\left(\frac{A}{(R-r)^\gamma}+B\right)\,.
	\]
\end{lemma}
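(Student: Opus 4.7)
The plan is to run the classical Giaquinta--Modica iteration argument: build an increasing sequence $r_0=r<r_1<r_2<\cdots$ converging to $R$, iterate the hypothesis along this sequence to relate $h(r)$ to $h(r_k)$ with an explicit tail, and then exploit the fact that $h$ is bounded on $[r,R]$ to pass to the limit $k\to\infty$.

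Concretely, I would fix a parameter $\tau\in(0,1)$ to be chosen and set
\[
r_i = r + (1-\tau^i)(R-r),\qquad i=0,1,2,\ldots,
\]
so that $r_0=r$, $r_i\uparrow R$, and $r_{i+1}-r_i=(1-\tau)\tau^i(R-r)$. Applying the assumed inequality with $r_1=r_i$, $r_2=r_{i+1}$ gives
\[
h(r_i)\le \vartheta\, h(r_{i+1}) + \frac{A}{(1-\tau)^\gamma (R-r)^\gamma}\,\tau^{-i\gamma} + B.
\]
Iterating this bound $k$ times yields
\[
h(r)\le \vartheta^k h(r_k) + \frac{A}{(1-\tau)^\gamma (R-r)^\gamma}\sum_{i=0}^{k-1}\Bigl(\frac{\vartheta}{\tau^\gamma}\Bigr)^{\!i} + B\sum_{i=0}^{k-1}\vartheta^i.
\]

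The key balancing act—what I would call the only real subtlety—is the choice of $\tau$: on the one hand $\tau<1$ is required so that $r_i\to R$, on the other hand we need $\vartheta/\tau^\gamma<1$ so that the geometric series controlling the $A$-term converges. Both are achievable simultaneously because $\vartheta<1$: it suffices to pick any $\tau\in(\vartheta^{1/\gamma},1)$, for instance $\tau^\gamma=(1+\vartheta)/2$, which depends only on $\vartheta$ and $\gamma$.

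With that choice, since $h$ is bounded on $[r,R]$ we have $\vartheta^k h(r_k)\to 0$ as $k\to\infty$, and both geometric sums are bounded by constants depending only on $\vartheta$ and $\gamma$. Letting $k\to\infty$ therefore gives
\[
h(r)\le \frac{1}{1-\vartheta/\tau^\gamma}\cdot\frac{A}{(1-\tau)^\gamma(R-r)^\gamma} + \frac{B}{1-\vartheta},
\]
which is exactly the claimed estimate with $c=c(\vartheta,\gamma)$. No step requires more than elementary manipulation; the entire argument is a template for absorbing a contracting factor $\vartheta<1$ against a blowing-up $(r_2-r_1)^{-\gamma}$ penalty, and the use of $h\in L^\infty([r,R])$ enters only to eliminate the residual term $\vartheta^k h(r_k)$.
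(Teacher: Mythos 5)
Your argument is correct and complete: the choice $\tau^\gamma=(1+\vartheta)/2$ indeed makes $\vartheta/\tau^\gamma<1$ while keeping $\tau<1$, and boundedness of $h$ kills the residual term, so the constant depends only on $\vartheta$ and $\gamma$ as claimed. The paper states this lemma without proof (it is the classical Giaquinta--Modica iteration lemma), and your proof is precisely the standard argument it implicitly relies on, so there is nothing to add.
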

We make use of the following Gagliardo--Nirenberg lemma, see \cite[Lemma~8.3]{Giusti}.
\begin{lemma}\label{sobolev_lem}
	Let $B_{\rho}(x_0)\subset\RR^n$, $\sig,s,r\in[1,\infty)$ and $\vartheta\in(0,1)$ such that 
	\[
		-\frac{n}{\sig}\le \vartheta\left(1-\frac{n}{s}\right)-(1-\vartheta)\frac{n}{r}\,.
	\]
	Then for every $v\in W^{1,s}(B_{\rho}(x_0))$ there exists a constant $c=c(n,\sig)$ such that
	\[
		\fint_{B_{\rho}(x_0)}\frac{|v|^\sig}{\rho^\sig}\,dx
		\le c\left(\fint_{B_{\rho}(x_0)}\left(\frac{|v|^s}{\rho^s}+|\na v|^s\right)\,dx\right)^\frac{\vartheta \sig}{s}\left(\fint_{B_{\rho}(x_0)}\frac{|v|^r}{\rho^r}\,dx\right)^\frac{(1-\vartheta)\sig}{r}\,.
	\]
\end{lemma}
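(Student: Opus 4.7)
The plan is to reduce the inequality to the unit ball by scaling, and there invoke the standard Gagliardo--Nirenberg--Sobolev interpolation together with Hölder's inequality on a bounded set to accommodate the inequality (rather than equality) scaling condition. Since the statement is the very one from Giusti~\cite{Giusti}, the main content is to verify that both sides rescale consistently and then recall the ingredients.

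First I would rescale: set $\tilde v(y):=\rho^{-1}v(x_0+\rho y)$ for $y\in B_1(0)$. Then $\nabla\tilde v(y)=\nabla v(x_0+\rho y)$ and a direct change of variables gives
\[
    \fint_{B_\rho(x_0)}\tfrac{|v|^\sigma}{\rho^\sigma}\,dx=\fint_{B_1}|\tilde v|^\sigma\,dy,\qquad \fint_{B_\rho}\tfrac{|v|^s}{\rho^s}\,dx=\fint_{B_1}|\tilde v|^s\,dy,
\]
and analogously for the $L^s$-norm of $\nabla v$ and the $L^r$-norm of $v/\rho$. Thus it suffices to prove the inequality with $\rho=1$.

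On $B_1$ I would treat the cases $s<n$, $s=n$, $s>n$ uniformly by fixing any $s^\star\in[1,\infty]$ with $W^{1,s}(B_1)\hookrightarrow L^{s^\star}(B_1)$ and $\tfrac{1}{s^\star}\le\tfrac{1}{s}-\tfrac{1}{n}$ (with $\tfrac{1}{s^\star}=0$ permitted); the Sobolev embedding theorem then yields
\[
    \|\tilde v\|_{L^{s^\star}(B_1)}\le c(n,s^\star)\bigl(\|\tilde v\|_{L^s(B_1)}+\|\nabla\tilde v\|_{L^s(B_1)}\bigr).
\]
Define $\sigma_0$ by $\tfrac{1}{\sigma_0}=\tfrac{\vartheta}{s^\star}+\tfrac{1-\vartheta}{r}$. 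Hölder's inequality with exponents $s^\star/(\vartheta\sigma_0)$ and $r/((1-\vartheta)\sigma_0)$ gives
\[
    \|\tilde v\|_{L^{\sigma_0}(B_1)}\le \|\tilde v\|_{L^{s^\star}(B_1)}^{\vartheta}\|\tilde v\|_{L^r(B_1)}^{1-\vartheta},
\]
which combined with the previous embedding is exactly the required Gagliardo--Nirenberg inequality on $B_1$ in the critical case $\sigma=\sigma_0$.

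The assumed condition $-\tfrac{n}{\sigma}\le\vartheta(1-\tfrac{n}{s})-(1-\vartheta)\tfrac{n}{r}$ rearranges to $\tfrac{1}{\sigma}\ge\vartheta\bigl(\tfrac{1}{s}-\tfrac{1}{n}\bigr)+\tfrac{1-\vartheta}{r}\ge\tfrac{1}{\sigma_0}$, i.e.\ $\sigma\le\sigma_0$. Because $B_1$ has finite measure, the monotonicity of $L^p$-norms in $p$ (that is, Hölder's inequality with $|B_1|<\infty$) yields $\|\tilde v\|_{L^\sigma(B_1)}\le c(n)\|\tilde v\|_{L^{\sigma_0}(B_1)}$, and this degrades the critical Gagliardo--Nirenberg identity into the claimed inequality for the given $\sigma$. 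Raising to power $\sigma$ and undoing the rescaling back to $B_\rho(x_0)$ produces the statement; the constant depends only on $n$ and $\sigma$, as claimed.

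The only subtlety worth flagging is the borderline case $s=n$, where $s^\star=\infty$ is not available through a direct embedding; there I would pick any $s^\star<\infty$ large enough so that $\vartheta/s^\star+(1-\vartheta)/r\le 1/\sigma$, which is possible because $\vartheta<1$ and $r\ge 1$, and then repeat the Hölder interpolation above. Beyond this, there is no genuine obstacle --- the argument is a routine scaling reduction plus Sobolev embedding and interpolation.
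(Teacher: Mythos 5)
The paper itself gives no proof of this lemma (it is quoted verbatim from Giusti, Lemma 8.3), so the only question is whether your argument actually proves the stated statement. Your scaling reduction and the subcritical case $s<n$ are fine: critical Sobolev embedding, Hölder interpolation with weights $\vartheta,1-\vartheta$, and the inclusion $L^{\sigma_0}(B_1)\subset L^{\sigma}(B_1)$ do give the claim there, and this is the regime the paper mostly needs. The gap is in the range $s\ge n$, which the lemma as stated includes. For $s>n$ your own requirement $\tfrac{1}{s^\star}\le\tfrac1s-\tfrac1n$ is unsatisfiable (the right-hand side is negative), and taking $s^\star=\infty$ gives $\tfrac{1}{\sigma_0}=\tfrac{1-\vartheta}{r}$, but the hypothesis $\tfrac1\sigma\ge\vartheta(\tfrac1s-\tfrac1n)+\tfrac{1-\vartheta}{r}$ does \emph{not} imply $\sigma\le\sigma_0$, since the first term is negative; so the final step ``$\|\tilde v\|_{L^\sigma}\le c\|\tilde v\|_{L^{\sigma_0}}$'' goes the wrong way. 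Concretely, $n=2$, $s=4$, $r=1$, $\vartheta=\tfrac12$, $\sigma=\tfrac83$ satisfies the hypothesis with equality, but embedding into $L^\infty$ plus Hölder only yields $\|v\|_{L^{8/3}}\le c\|v\|_{W^{1,4}}^{5/8}\|v\|_{L^1}^{3/8}$, and you cannot trade the exponents $(5/8,3/8)$ for the claimed $(1/2,1/2)$: that would require an upper bound on $\|v\|_{W^{1,s}}/\|v\|_{L^r}$, which fails for concentrating bumps. The same defect hits your $s=n$ patch at the borderline: choosing a large finite $s^\star$ needs strict inequality $\tfrac1\sigma>\tfrac{1-\vartheta}{r}$, so e.g.\ the legitimate instance $\|v\|_{L^4(B_1)}\le c\|v\|_{W^{1,2}}^{3/4}\|v\|_{L^1}^{1/4}$ in the plane ($s=n=2$, $r=1$, $\vartheta=3/4$, equality) is out of reach of your argument.

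The missing idea is the Hölder/Morrey endpoint of Nirenberg's interpolation: for $s\ge n$ one must interpolate quantitatively between the $C^{0,1-n/s}$ seminorm (controlled by $\|\nabla v\|_{L^s}$ via Morrey) and $\|v\|_{L^r}$, not merely between $\|v\|_{L^\infty}$ (or $L^{s^\star}$) and $\|v\|_{L^r}$ by Hölder's inequality; this is exactly what the proof in Giusti supplies, and it is why the admissibility condition involves $\tfrac1s-\tfrac1n$ even when this quantity is negative. A secondary point: your closing claim that the constant depends only on $(n,\sigma)$ is asserted but not justified — in your chain the Sobolev constant for $W^{1,s}(B_1)\hookrightarrow L^{s^\star}(B_1)$ depends on $s$ and degenerates as $s\uparrow n$, so matching the stated dependence also requires an argument (or a different proof) rather than a remark.
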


\subsection{Solutions}\label{ssec:sol}
The notion of weak solutions to \eqref{eq} is defined as follows.
\begin{definition}
	A map $u:\Om_T\longrightarrow\RR^N$ satisfying
 \[
			u\in C(0,T;L^2(\Om,\RR^N))\cap L^q(0,T;W^{1,q}(\Om,\RR^N))
	\]
is a weak solution to \eqref{eq}, if 
for every $\varphi\in C_0^\infty(\Omega_T,\RR^N)$ it holds
\begin{align*}
\begin{split}
    &\iint_{\Om_T}(-u\cdot\varphi_t+\mA(z,\na u)\cdot \na \varphi)\,dz\\
    &=\iint_{\Om_T}(|F|^{p-2}F\cdot\na \varphi+a(z)|F|^{p-2}F\cdot \na\varphi)\,dz\,.
\end{split}
\end{align*}
\end{definition}

 We note that to our best knowledge that despite the existence of weak solutions to~\eqref{eq} is expected in the full scope of our current inhnomogeneous in time and space study~\eqref{range_pq}, it is not yet established. We refer to~\cite{cgzg,C-b} for existence to problems with more general growth, but covering probably not sharp regularity with respect to the time variable, and \cite{bgs-discontinuous} for the existence to related problems, that do not fully embrace our case, but remarkably relax demanded time regularity.  At this place we shall stress that in the steady case, due to~\cite{eslemi}, if for $u$ being a weak solution or a local minimizer to related variational functionals, $H(x,\nabla u)\in L^1$, $a\in C^{0,\alpha}$ and $p$ and $q$ satisfy some closeness condition governed by $\alpha$, then $u\in L^q$. Surprisingly, a counterpart of such a result in a parabolic setting is still missing. Nonetheless, we expect it holds true and we continue our work for $L^q(0,T;W^{1,q}(\Om,\RR^N))$-solutions. As for the parabolic approaches under the natural energy regime  $H(z,\nabla u)\in L^1$, let us refer to~\cite{KIM2025110738},

\section{Energy estimates}\label{sec:energy-estimates}
In this section, we provide energy estimates. The first estimate is the Caccioppoli inequality.
\begin{lemma}\label{caccio_lem}
	Let $u$ be a weak solution to \eqref{eq} in $\Omega_T$. Then for every $Q_{R,\ell}(z_0)\subset\Omega_T$, with $R,\ell>0$, and for $r\in [R/2,R)$ and $\tau^2\in[\ell^2/2^2,\ell^2)$, there exists a constant $c=c(n,p,q,\nu,L)$, such that
	\begin{align*}
		\begin{split}
			&\sup_{t\in I_{\tau}(t_0)}\fint_{B_{r}(x_0)}\frac{|u-u_{Q_{r,\tau}(z_0)}|^2}{\tau^2}\,dx+\fiint_{Q_{r,\tau}(z_0)}(|\na u|^p+a(z)|\na u|^q)\,dz\\
			&\le c\fiint_{Q_{R,\ell}(z_0)}\left(\frac{|u-u_{Q_{R,\ell}(z_0)}|^p}{(R-r)^p}+a(z)\frac{|u-u_{Q_{R,\ell}(z_0)}|^q}{(R-r)^q}\right)\,dz\\
			&\qquad+c\fiint_{Q_{R,\ell}(z_0)}\frac{|u-u_{Q_{R,\ell}(z_0)}|^2}{\ell^2-\tau^2}\,dz+c\fiint_{Q_{R,\ell}(z_0)} (|F|^p+a(z)|F|^q)  \,dz\,.
		\end{split}
	\end{align*}
\end{lemma}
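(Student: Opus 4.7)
The statement is a Caccioppoli inequality on \emph{anisotropic} cylinders with independent spatial radius $R$ and temporal half-width $\ell$. My plan is to test the weak formulation with the standard cut-off of the mean-zero solution, then exploit the coercivity in \eqref{str} to absorb the good gradient terms on the left, while the cross term is controlled by Young's inequality and the right-hand side by the growth bound in \eqref{str}. Specifically, pick a spatial cut-off $\eta\in C_0^\infty(B_R(x_0))$ with $\eta\equiv 1$ on $B_r(x_0)$ and $|\na\eta|\le c/(R-r)$, and a temporal cut-off $\zeta\in C^\infty(\RR)$ with $\zeta\equiv 1$ on $I_\tau(t_0)$, $\spt\zeta\subset I_\ell(t_0)$, and $|\zeta'|\le c/(\ell^2-\tau^2)$ (the quadratic time-scale difference comes precisely from the difference of the squared half-widths defining $I_\ell$ and $I_\tau$). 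Since $u$ is only $C(0,T;L^2)$, the time derivative is not pointwise defined, so I would work with Steklov averages $[u]_h$ and pass to the limit $h\to 0$, as is classical for parabolic $p$-Laplace-type systems.

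\textbf{Testing and the three resulting terms.} I test with $\varphi=\eta^q\zeta^2 (u-(u)_{Q_{R,\ell}(z_0)})$ (the mean is a constant vector, so $\na\varphi=\eta^q\zeta^2\na u+q\eta^{q-1}\zeta^2(u-(u)_{Q_{R,\ell}(z_0)})\otimes\na\eta$). The time-derivative term produces, after integration by parts in $t$,
\[
\tfrac{1}{2}\iint_{Q_{R,\ell}(z_0)}|u-(u)_{Q_{R,\ell}(z_0)}|^2\,\eta^q\,\partial_t(\zeta^2)\,dz,
\]
which is bounded by the $|u-(u)_{Q_{R,\ell}(z_0)}|^2/(\ell^2-\tau^2)$ integral on the right, while the supremum-in-time term is obtained by replacing $\zeta^2$ with a product $\zeta^2(t)\chi_{(-\infty,t_1)}(t)$ (approximated smoothly), taking the supremum over $t_1\in I_\tau(t_0)$, exactly as in the classical argument. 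The principal part $\mA(z,\na u)\cdot\na\varphi$ splits into the coercive piece $\mA(z,\na u)\cdot\na u\,\eta^q\zeta^2\ge \nu\eta^q\zeta^2 H(z,|\na u|)$ and a cross term
\[
q\,\mA(z,\na u)\cdot(u-(u)_{Q_{R,\ell}(z_0)})\otimes\na\eta\,\eta^{q-1}\zeta^2,
\]
which by \eqref{str} is bounded by $cL\,(|\na u|^{p-1}+a(z)|\na u|^{q-1})\,\eta^{q-1}\,|\na\eta|\,|u-(u)_{Q_{R,\ell}(z_0)}|\,\zeta^2$. Applying Young's inequality separately with exponents $(p,p')$ (to the $p$-part) and $(q,q')$ (to the $q$-part), the gradient factors are absorbed into the coercive term on the left, leaving contributions controlled by $|u-(u)_{Q_{R,\ell}(z_0)}|^p/(R-r)^p$ and $a(z)|u-(u)_{Q_{R,\ell}(z_0)}|^q/(R-r)^q$, which matches the first line on the right of the claimed inequality.

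\textbf{Forcing term and conclusion.} The right-hand side of \eqref{eq} contributes
\[
\iint_{Q_{R,\ell}(z_0)}\bigl(|F|^{p-2}F+a(z)|F|^{q-2}F\bigr)\cdot\na\varphi\,dz,
\]
and after expanding $\na\varphi$ as above, Young's inequality with exponents $(p,p')$ and $(q,q')$ yields $|\na u|^p+a(z)|\na u|^q$ terms (absorbed on the left), plus $|F|^p+a(z)|F|^q$ terms and the same $|u-(u)_{Q_{R,\ell}(z_0)}|^p/(R-r)^p$, $a|u-(u)_{Q_{R,\ell}(z_0)}|^q/(R-r)^q$ contributions. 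Combining all pieces and normalizing to averaged integrals (dividing by $|Q_{R,\ell}(z_0)|\sim R^n\ell^2$) gives the claim. The main technical point I would have to be careful about is keeping the two phases separate in the Young inequality splittings so as to reproduce exactly the polynomial structure $|u-(u)_{Q_{R,\ell}(z_0)}|^p/(R-r)^p+a(z)|u-(u)_{Q_{R,\ell}(z_0)}|^q/(R-r)^q$ on the right (in particular not mixing the $a$-weighted and unweighted terms, which would require a lower bound on $a$ that we do not have); for that reason, I test with the $\eta^q$ weight (not $\eta^p$), so that after absorption the remaining bad term carries $\eta^{q-p}\le 1$ where needed.
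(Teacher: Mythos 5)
Your proposal matches the paper's argument essentially step for step: same cut-off geometry ($\eta^q\zeta^2$ with $|\nabla\eta|\lesssim(R-r)^{-1}$, $|\zeta'|\lesssim(\ell^2-\tau^2)^{-1}$), same sup-in-time device (your smoothed $\chi_{(-\infty,t_1)}$ factor is precisely the paper's $\zeta_\delta$), same use of Steklov averages and the same phase-separated Young inequalities with exponents $(p,p')$ and $(q,q')$, and the same reason for the $\eta^q$ (rather than $\eta^p$) weight. The one small step you leave implicit is the final triangle inequality passing from $u_{Q_{R,\ell}(z_0)}$ to $u_{Q_{r,\tau}(z_0)}$ in the sup-term on the left, but this is routine.
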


\begin{proof}
	Let $\eta\in C_0^\infty(B_{R}(x_0))$ be a cut-off function in the spatial direction satisfying
	\begin{align}\label{sec3:1}
		0\le \eta\le 1\text{ in }B_R(x_0),\quad\eta\equiv1\text{ in } B_{r}(x_0)\quad\text{and}\quad\lVert \na \eta\rVert_{L^\infty(B_R(x_0))}\le \frac{2}{R-r}\,.
	\end{align}
	For $\tau^2\in[\ell^2/2,\ell^2)$, we take sufficiently small $h_0>0$ so that there exists a cut-off function in the time direction $\zeta\in C_0^\infty(I_{\ell-h_0}(t_0))$ such that
	\begin{align}\label{sec3:3}
			0\le \zeta\le 1 \text{ in }I_{l-h_0}(t_0)\,,\ \  \zeta\equiv1\text{ in } I_{\tau}(t_0)\ \text{ and }\ \lVert\pa_t\zeta\rVert_{L^\infty(I_{l-h_0}(t_0))}\le \frac{3}{\ell^2-\tau^2}\,.
	\end{align}
Meanwhile, we take an arbitrary $t_*\in I_\tau(t_0)$ and $\delta\in(0,h_0)$. We define $\zeta_\delta$ as
\begin{align}\label{sec3:4}
	\zeta_{\delta}(t)=
	\begin{cases}
		1\,,&t\in (-\infty,t_*-\delta)\,,\\
		1-\frac{t-t_*+\delta}{\delta}\,,&t\in[t_*-\delta,t_*]\,.\\
		0\,,&t\in(t_*,\infty)\,.
	\end{cases}
\end{align}

For $h\in(0,h_0)$, we take Steklov averages to \eqref{eq} and deduce
\begin{align}\label{sec3:2}
	\pa_t[u-u_{Q_{R,\ell}(z_0)}]_h-\dv [\mA(\cdot,\na u)]_h\
	=-\dv[|F|^{p-2}F+a|F|^{q-2}F]_h
\end{align}
in $B_{R}(x_0)\times I_{\ell-h}(t_0)$.
On the other hand, the function $\varphi=[u-u_{Q_{R,\ell}(z_0)}]_h\eta^q\zeta^2 \zeta_\delta$ belongs to
\[
 W^{1,2}_0(I_{\ell-h}(t_0);L^2(B_{R}(x_0),\RR^N))\cap L^q(I_{\ell-h}(t_0);W_0^{1,q}(B_{R}(x_0),\RR^N))\,.
\]
Taking $\varphi$ as a test function in \eqref{sec3:2}, we have
\begin{align}\label{sec3:6}
	\begin{split}
		\mathrm{I}+\mathrm{II}&=
		\fiint_{Q_{R,\ell}(z_0)}\pa_t[u-u_{Q_{R,\ell}}(z_0)]_h\cdot\varphi\,dz
		+\fiint_{Q_{R,\ell}(z_0)}[\mA(\cdot,\na u)]_h\cdot\na\varphi \,dz\\
		&=\fiint_{Q_{R,\ell}(z_0)}[|F|^{p-2}F+a|F|^{q-2}F]_h\cdot \na\varphi\,dz=\mathrm{III}\,.
	\end{split}
\end{align}
The integrals $\mathrm{II}$ and $\mathrm{III}$ are finite. 
Indeed, it follows from \eqref{str} and the properties of the Steklov average that
	\begin{align*}
			\begin{split}
			    \mathrm{II}
			&\le L\fiint_{Q_{R,\ell}(z_0)}\left|[|\nabla u|^{p-1}]_h(x,t)\right|\left|\nabla\varphi(x,t)\right|\, dx\ dt\\
            &\qquad+L\fiint_{Q_{R,\ell}(z_0)}|[a|\nabla u|^{q-1}]_h(x,t)| |\nabla\varphi(x,t)|\, dx\ dt\,.
			\end{split}
	\end{align*}
     The first term on the right-hand side is finite as in parabolic $p$-Laplace systems. Meanwhile, the second term on the right-hand side can be written as
	\begin{align*}
     \begin{split}
     &\fiint_{Q_{R,\ell}(z_0)}|[a|\nabla u|^{q-1}]_h(x,t)| |\nabla\varphi(x,t)|\, dx\ dt\\
    & =\fiint_{Q_{R,\ell}(z_0)}\fint_t^{t+h}[a(x,s)]^\frac{q-1}{q}|\nabla u(x,s)|^{q-1}\  [a(x,s)]^\frac{1}{q}|\nabla\varphi(x,t)| \, ds \, dx\ dt\,.
      \end{split}
	\end{align*} 
	Employing H\"older's inequality and the properties of the Steklov average, there exists a constant $c=c(n)$ such that
	\begin{align*}
		\begin{split}
			&\fiint_{Q_{R,\ell}(z_0)}|[a|\nabla u|^{q-1}]_h(x,t)| |\nabla\varphi(x,t)|\, dx\ dt\\
			&\le c\left(\fiint_{Q_{R,\ell}(z_0)}a|\nabla u|^{q}  \ dx\ dt\right)^\frac{q-1}{q}\left(\fiint_{Q_{R,\ell}(z_0)}\fint_t^{t+h}a(x,s)|\nabla \varphi(x,t)|^{q}\,ds\,dx\ dt\right)^\frac{1}{q}\\
            &= c\left(\fiint_{Q_{R,\ell}(z_0)}a|\nabla u|^{q}\, dx\ dt\right)^\frac{q-1}{q}\left(\fiint_{Q_{R,\ell}(z_0)}a_h(x,t)|\nabla \varphi(x,t)|^{q}\, dx\ dt\right)^\frac{1}{q}\,,
		\end{split}
	\end{align*} 
which shows that $\mathrm{II}$ is finite provided $|\na u|\in L^q(\Om_T)$. The same argument applies for the finiteness of $\mathrm{III}$. From below, we estimate each term.

\textbf{Estimate of $\mathrm{I}$:} Applying integration by parts, there holds
\begin{align}\label{sec3:7}
	\begin{split}
		\mathrm{I}
		&=\fiint_{Q_{R,\ell}(z_0)}\frac{1}{2}(\pa_t |[u-u_{Q_{R,\ell}(z_0)}]_h|^2)\eta^q\zeta^2\zeta_{\delta}\,dz\\
		&=-\fiint_{Q_{R,\ell}(z_0)}|[u-u_{Q_{R,\ell}(z_0)}]_h|^2\eta^q\zeta\zeta_{\delta}\pa_t\zeta\,dz\\
		&\qquad-\fiint_{Q_{R,\ell}(z_0)}\frac{1}{2}|[u-u_{Q_{R,\ell}(z_0)}]_h|^2\eta^q\zeta^2\pa_t\zeta_{\delta}\,dz\,.
	\end{split}
\end{align}
For the first term on the right-hand side of \eqref{sec3:7}, we estimate using \eqref{sec3:3}, Then,
\[
		-\fiint_{Q_{R,\ell}(z_0)} | [u-u_{Q_{R,\ell}(z_0)}]_h |^2\eta^q\zeta\zeta_{\delta}\pa_t\zeta\,dz
		\ge-c\fiint_{Q_{R,\ell}(z_0)}\frac{ |[u-u_{Q_{R,\ell}(z_0)}]_h |^2}{\ell^2-\tau^2}\,dz\,.
\]
Regarding the second term  on the right-hand side of \eqref{sec3:7}, we apply \eqref{sec3:4} to have
\begin{align*}
	\begin{split}
		&-\fiint_{Q_{R,\ell}(z_0)}\frac{1}{2} | [u-u_{Q_{R,\ell}(z_0)}]_h | ^2\eta^q\zeta^2\pa_t\zeta_{\delta}\,dz\\
		&=\frac{1}{|Q_{R,\ell}|}\fint_{t_*-\delta}^{t_*}\int_{B_{R}(x_0)}\frac{1}{2} | [u-u_{Q_{R,\ell}(z_0)}]_h |^2\eta^q\zeta^2\,dx\,dt\\
		&\ge \frac{1}{|Q_{R,\ell}|}\fint_{t_*-\delta}^{t_*}\int_{B_{r}(x_0)}\frac{1}{2} | [u-u_{Q_{R,\ell}(z_0)}]_h | ^2\,dx\,dt\,.
	\end{split}
\end{align*}
Since both $u-u_{Q_{R,\ell}(z_0)}$ and $[u-u_{Q_{R,\ell}(z_0)}]_h$ lie in $C(I_{\ell-h}(t_0);L^2(B_R(x_0),\mathbb{R}^N ))$, the above integral over ball and time interval converges to the integral over the ball at the time $t=t_*$ by the Lebesgue point theorem as $\delta$ goes to $0^+$, and moreover $[u-u_{Q_{R,\ell}(z_0)}]_h$ converges to $u-u_{Q_{R,\ell}(z_0)}$ in the norm of $C(I_{\ell}(t_0);L^2(B_R(x_0),\mathbb{R}^N ))$ as $h$ goes to $0^+$. Therefore, we obtain
\begin{align*}
	\begin{split}
		\lim_{h\to0^+}\lim_{\delta\to0^+}\mathrm{I}&\ge -c\fiint_{Q_{R,\ell}(z_0)}\frac{|u-u_{Q_{R,\ell}(z_0)}|^2}{\ell^2-\tau^2}\,dz\\
		&\qquad+\frac{1}{2|Q_{R,\ell}|}\int_{B_r(x_0)}|u(x,t_*)-u_{Q_{R,\ell}(z_0)}|^2\,dx\,.
	\end{split}
\end{align*}

\textbf{Estimate of $\mathrm{II}$:} Since Steklov averages are involved in the time direction, we have
\begin{align}\label{sec3:11}
	\begin{split}
		\mathrm{II}
		&=\fiint_{Q_{R,\ell}(z_0)}[\mA(\cdot,\na u)]_h\cdot[ \na u]_h\eta^q\zeta^2\zeta_\delta\,dz\\
		&\qquad+q\fiint_{Q_{R,\ell}(z_0)}[\mA(\cdot,\na u)]_h\cdot [u-u_{Q_{R,\ell}(z_0)}]_h\na\eta\eta^{q-1}\zeta^2\zeta_\delta\,dz\,.
	\end{split}
\end{align}
To estimate the first term in \eqref{sec3:11}, we apply properties of Steklov averages and \eqref{str}, Then we get
\begin{align*}
	\begin{split}
			&\lim_{h\to0^+}\lim_{\delta\to0^+}\fiint_{Q_{R,\ell}(z_0)}[\mA(\cdot,\na u)]_h\cdot [\na u]_h\eta^q\zeta^2\zeta_\delta\,dz	\\
			&\ge \frac{\nu}{|Q_{R,\ell}|}\int_{I_{\ell}(t_0)\cap (-\infty,t_*)}\int_{B_{R}(x_0)}(|\na u|^p+a(z)|\na u|^q)\eta^q\zeta^2\,dx\,dt\,.
	\end{split}
\end{align*}
To estimate the second term in \eqref{sec3:11}, we use \eqref{str} and \eqref{sec3:1}, Then
\begin{align*}
	\begin{split}
		&\lim_{h\to0^+}\lim_{\delta\to0^+}q\fiint_{Q_{R,\ell}(z_0)}[\mA(\cdot,\na u)]_h\cdot [u-u_{Q_{R,\ell}(z_0)}]_h\na\eta\eta^{q-1}\zeta^2\zeta_\delta\,dz\\
		&\ge - \frac{2Lq}{|Q_{R,\ell}|}\int_{I_{\ell}(t_0)\cap (-\infty,t_*)}\int_{B_{R}(x_0)}|\na u|^{p-1}\eta^{q-1}\zeta^2\frac{|u-u_{Q_{R,\ell}(z_0)}|}{R-r}\,dx\,dt\\
		&\qquad-\frac{2Lq}{|Q_{R,\ell}|}\int_{I_{\ell}(t_0)\cap (-\infty,t_*)}\int_{B_{R}(x_0)}a(z)|\na u|^{q-1}\eta^{q-1}\zeta^2\frac{|u-u_{Q_{R,\ell}(z_0)}|}{R-r}\,dx\,dt\,.
	\end{split}
\end{align*}
By applying Young's inequality, we have
\begin{align*}
	\begin{split}
		& \lim_{h\to0^+}\lim_{\delta\to0^+}q\fiint_{Q_{R,\ell}(z_0)}[\mA(z,\na u)]_h\cdot [u-u_{Q_{R,\ell}(z_0)}]_h\na\eta\eta^{q-1}\zeta^2\zeta_\delta\,dz\\
		&\ge-\frac{\nu}{4|Q_{R,\ell}|}\int_{I_{\ell}(t_0)\cap (-\infty,t_*)}\int_{B_{R}(x_0)}(|\na u|^p+a(z)|\na u|^q)\eta^q\zeta^2\,dx\,dt\\
		&\qquad-c\fiint_{Q_{R,\ell}(z_0)}\left(\frac{|u-u_{Q_{R,\ell}(z_0)}|^p}{(R-r)^p}+a(z)\frac{|u-u_{Q_{R,\ell}(z_0)}|^q}{(R-r)^q}\right)\,dz
	\end{split}
\end{align*}
for some $c=c(p,q,\nu,L)$.
It follows that
\begin{align*}
	\begin{split}
		\lim_{h\to0^+}\lim_{\delta\to0^+}\mathrm{II}&\ge \frac{3\nu}{4|Q_{R,\ell}|}\int_{I_{\ell}(t_0)\cap (-\infty,t_*)}\int_{B_{R}(x_0)}(|\na u|^p+a(z)|\na u|^q)\eta^q\zeta^2\,dx\,dt\\
		&\qquad-c\fiint_{Q_{R,\ell}(z_0)}\left(\frac{|u-u_{Q_{R,\ell}(z_0)}|^p}{(R-r)^p}+a(z)\frac{|u-u_{Q_{R,\ell}(z_0)}|^q}{(R-r)^q}\right)\,dz\,.
	\end{split}
\end{align*}

\textbf{Estimate of $\mathrm{III}$:} We apply properties of Steklov averages and Young's inequality as above. Then
\begin{align*}
	\begin{split}
		\lim_{h\to0^+}\lim_{\delta\to0^+}\mathrm{III}
		&\le c\fiint_{Q_{R,\ell}(z_0)}(|F|^p+a(z)|F|^q)\,dz\\
		&\qquad+\frac{\nu}{2|Q_{R,\ell}|}\int_{I_{\ell}(t_0)\cap (-\infty,t_*)}\int_{B_{R}(x_0)}(|\na u|^p+a(z)|\na u|^q)\eta^q\zeta^2\,dx\,dt\\
		&\qquad+c\fiint_{Q_{R,\ell}(z_0)}\left(\frac{|u-u_{Q_{R,\ell}(z_0)}|^p}{(R-r)^p}+a(z)\frac{|u-u_{Q_{R,\ell}(z_0)}|^q}{(R-r)^q}\right)\,dz\,.
	\end{split}
\end{align*}

Therefore combining all these estimates, \eqref{sec3:6} becomes
\begin{align*}
	\begin{split}
		&\frac{1}{|Q_{R,\ell}|}\int_{B_r(x_0)}|u(x,t_*)-u_{Q_{R,\ell}(z_0)}|^2\,dx\\
		&\qquad+\frac{1}{|Q_{R,\ell}|}\int_{I_{\ell}(t_0)\cap (-\infty,t_*)}\int_{B_{R}(x_0)}(|\na u|^p+a(z)|\na u|^q)\eta^q\zeta^2\,dx\,dt\\
		&\le c\fiint_{Q_{R,\ell}(z_0)}\left(\frac{|u-u_{Q_{R,\ell}(z_0)}|^p}{(R-r)^p}+a(z)\frac{|u-u_{Q_{R,\ell}(z_0)}|^q}{(R-r)^q}\right)\,dz\\		
		&\qquad+c\fiint_{Q_{R,\ell}(z_0)}\frac{|u-u_{Q_{R,\ell}(z_0)}|^2}{\ell^2-\tau^2}\,dz+c\fiint_{Q_{R,\ell}(z_0)}(|F|^p+a(z)|F|^q)\,dz\,.
	\end{split}
\end{align*}
Since $t_*\in I_{\tau}(t_0)$ is arbitrary, $|B_{R}|\approx c(n)|B_{r}|$ and $|I_{\ell}|\approx |I_{\tau}|$, we get
\begin{align*}
	\begin{split}
		&\sup_{t\in I_\tau(t_0)}\fint_{B_{r}(x_0)}\frac{|u(x,t)-u_{Q_{R,\ell}(z_0)}|^2}{\tau^2}\,dx+\fiint_{Q_{r,\tau}(z_0)}(|\na u|^p+a(z)|\na u|^q)\,dz\\
		&\le c\fiint_{Q_{R,\ell}(z_0)}\left(\frac{|u-u_{Q_{R,\ell}(z_0)}|^p}{(R-r)^p}+a(z)\frac{|u-u_{Q_{R,\ell}(z_0)}|^q}{(R-r)^q}\right)\,dz\\
		&\qquad+c\fiint_{Q_{R,\ell}(z_0)}\frac{|u-u_{Q_{R,\ell}(z_0)}|^2}{\ell^2-\tau^2}\,dz+c\fiint_{Q_{R,\ell}(z_0)}(|F|^p+a(z)|F|^q)\,dz\,.
	\end{split}
\end{align*}
Finally, since the triangle inequality implies 
\begin{align*}
    \begin{split}
        &\sup_{t\in I_\tau(t_0)}\fint_{B_{r}(x_0)}|u(x,t)-u_{Q_{r,\tau}(z_0)}|^2\,dx\\
        &=\sup_{t\in I_\tau(t_0)}\fint_{B_{r}(x_0)}|u(x,t)-u_{Q_{R,\ell}(z_0)} +u_{Q_{R,\ell}(z_0)}-u_{Q_{r,\tau}(z_0)}|^2\,dx\\
        &\le 2\sup_{t\in I_\tau(t_0)}\fint_{B_{r}(x_0)}|u(x,t)-u_{Q_{R,\ell}(z_0)}|^2\,dx+  2|u_{Q_{r,\tau}(z_0)}-u_{Q_{R,\ell}(z_0)}|^2\\
        &\le 4\sup_{t\in I_\tau(t_0)}\fint_{B_{r}(x_0)}|u(x,t)-u_{Q_{R,\ell}(z_0)}|^2\,dx\,,
    \end{split}
\end{align*}
the proof is completed by substituting this inequality to the left hand side of the previous inequality.
\end{proof}

The second lemma is a gluing result.
\begin{lemma}\label{sec3:lem:2}
	Let $u$ be a weak solution to \eqref{eq} in $\Omega_T$ and $\eta\in C_0^\infty(B_{R}(x_0))$ be a nonnegative function such that  some constant $c>0$ it holds
	\begin{align}\label{sec3:34}
		 \fint_{B_{R}(x_0)}\eta\,dx=1\quad\text{and}\quad \lVert  \eta\rVert_{L^\infty(B_R(x_0))}+R\lVert\na \eta\rVert_{L^\infty(B_R(x_0))}<c\,.
	\end{align} 
	Then for $Q_{R,\ell}(z_0)\subset\Omega_T$ with $R,\ell>0$, there exists a constant $c=c(L)>0$ such that
	\begin{align*}
		\begin{split}
			&\sup_{t_1,t_2\in I_{\ell}(t_0)}|(u\eta)_{B_R(x_0)}(t_2)-(u\eta)_{B_R(x_0)}(t_1)|\\
			&\le c\frac{\ell^2}{R}\fiint_{Q_{R,\ell}(z_0)}(|\na u|^{p-1}+a(z)|\na u|^{q-1})\,dz\\
			&\qquad+c\frac{\ell^2}{R}\fiint_{Q_{R,\ell}(z_0)}(|F|^{p-1}+a(z)|F|^{q-1})\,dz\,.
		\end{split}
	\end{align*}
\end{lemma}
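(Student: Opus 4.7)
\textbf{Proof proposal for Lemma~\ref{sec3:lem:2}.} The plan is to use $\eta$, which depends only on $x$, as a spatial test function against the Steklov-averaged equation, so that the time derivative collapses onto $(u\eta)_{B_R(x_0)}$. First I take the Steklov average of \eqref{eq} in time, obtaining
\[
\pa_t [u]_h -\dv[\mA(\cdot,\na u)]_h = -\dv[|F|^{p-2}F + a|F|^{q-2}F]_h
\]
on $B_R(x_0)\times I_{\ell-h}(t_0)$ for $h>0$ small. Multiplying by $\eta(x)$, integrating over $B_R(x_0)$ and using that $\eta$ is compactly supported in $B_R(x_0)$ (so that the boundary terms from integration by parts vanish), I get
\[
\pa_t \fint_{B_R(x_0)} [u]_h \eta\,dx = \fint_{B_R(x_0)}\bigl([\mA(\cdot,\na u)]_h - [|F|^{p-2}F+a|F|^{q-2}F]_h\bigr)\cdot\na\eta\,dx.
\]
Since $\eta$ is time-independent and has unit average on $B_R(x_0)$, the left-hand side is $\pa_t (u_h\eta)_{B_R(x_0)}(t)$.

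Next I integrate in time from $t_1$ to $t_2$ and pass $h\to 0^+$, using that $u\in C(0,T;L^2)$ and that $|\na u|^{p-1}+a|\na u|^{q-1},\,|F|^{p-1}+a|F|^{q-1}\in L^1_{\loc}(\Om_T)$ by the assumed regularity class of $u$ and $F$ (the latter from $H(z,|F|)\in L^\gamma$ with $\gamma\ge 1$ and $a\in L^\infty$). The convergence of the Steklov averages in $L^1$ yields
\begin{align*}
(u\eta)_{B_R(x_0)}(t_2) - (u\eta)_{B_R(x_0)}(t_1)
&= \int_{t_1}^{t_2}\fint_{B_R(x_0)}\mA(z,\na u)\cdot\na\eta\,dx\,dt \\
&\quad - \int_{t_1}^{t_2}\fint_{B_R(x_0)}(|F|^{p-2}F+a|F|^{q-2}F)\cdot\na\eta\,dx\,dt.
\end{align*}

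From here the conclusion is a direct estimation. The growth bound \eqref{str} gives $|\mA(z,\na u)|\le L(|\na u|^{p-1}+a(z)|\na u|^{q-1})$, and the cutoff hypothesis \eqref{sec3:34} gives $|\na\eta|\le c/R$. Extending the time integral to all of $I_{\ell}(t_0)$ (since $t_1,t_2\in I_{\ell}(t_0)$ and the integrand is nonnegative after absolute values) and using $|I_{\ell}(t_0)|=2\ell^2$, I convert
\[
\int_{t_1}^{t_2}\fint_{B_R(x_0)}|f|\,dx\,dt \le 2\ell^2\fiint_{Q_{R,\ell}(z_0)}|f|\,dz,
\]
which produces the factor $\ell^2/R$ and yields the claimed bound. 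I expect no genuine obstacle here: the only care required is in the limit $h\to 0^+$, where one should justify the $L^1$ convergence of $[\mA(\cdot,\na u)]_h$ and $[|F|^{p-2}F+a|F|^{q-2}F]_h$ to their unaveraged counterparts, which follows from $|\na u|\in L^q$, $|F|^{p-1}+a^{(q-1)/q}|F|^{q-1}\in L^{q/(q-1)}$ and standard properties of the Steklov average (identical to the justification used in the proof of Lemma~\ref{caccio_lem}).
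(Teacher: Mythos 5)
Your proof is correct and follows essentially the same strategy as the paper's: test the equation against the spatial cutoff $\eta$, regularize the time derivative (you use Steklov averaging where the paper uses a Lipschitz time cutoff $\zeta_\delta$, but these are interchangeable standard devices that lead to the same limit via $u\in C(0,T;L^2)$), and then bound $|\mA|$ and $|\na\eta|$ by \eqref{str} and \eqref{sec3:34} to produce the $\ell^2/R$ factor. One minor remark: the signs in your intermediate identity for $(u\eta)_{B_R}(t_2)-(u\eta)_{B_R}(t_1)$ are flipped (the $\mA$ term should carry a minus and the $F$ term a plus after integrating by parts in space), but this is immaterial since you take absolute values immediately afterward.
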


\begin{proof}
	Take arbitrary $t_1,t_2\in I_{\ell}(t_0)$ with $t_1<t_2$. For $\delta\in(0,1)$ small enough, let $\zeta_\delta\in W_0^{1,\infty}(I_{\ell}(t_0))$ be defined as
	\begin{align*}
		\zeta_\delta(t)=
		\begin{cases}
			\qquad 0\,,&t_0-\ell^2\le t\le t_1-\delta\,,\\
			\frac{t-t_1+\delta}{\delta}\,,&t_1-\delta<t<t_1be\,,\\
			\qquad1\,,&t_1\le t\le t_2\,,\\
			\frac{t_2+\delta-t}{\delta}\,,&t_2<t<t_2+\delta\,,\\
			\qquad0\,,&t+\delta\le t\le t_0+\ell^2\,.
		\end{cases}
	\end{align*}
	Taking $\pm\eta\zeta_\delta\in W_0^{1,\infty}(Q_{R,\ell}(z_0))$ as a test function in \eqref{eq}, integration by parts gives
	\begin{align*}
		\begin{split}
			&\mp\fint_{t_1-\delta}^{t_1}\fint_{B_{R}(x_0)} u\eta\,dx\,dt\pm\fint_{t_2+\delta}^{t_2}\fint_{B_{R}(x_0)} u\eta\,dx\,dt\\
			&\le L\int_{t_1-\delta}^{t_2+\delta}\fint_{B_{R}(x_0)} (|\na u|^{p-1}+a(z)|\na u|^{q-1})|\na \eta||\zeta_\delta|\,dx\,dt\\
			&\qquad+\int_{t_1-\delta}^{t_2+\delta}\fint_{B_{R}(x_0)}(|F|^{p-1}+a(z)|F|^{q-1})|\na \eta||\zeta_\delta|\,dz\,.
		\end{split}
	\end{align*}
	Since $u\in C(I_\ell(t_0),L^2(B_{R}(x_0)))$ holds, letting $\delta\to0^+$ with Lebesgue point theorem and using the third condition in \eqref{sec3:34}, we obtain
	\begin{align*}
		\begin{split}
			&|(u\eta)_{B_R(x_0)}(t_1)-(u\eta)_{B_R(x_0)}(t_2)|\\
			&\le c\frac{\ell^2}{R}\fiint_{Q_{R,\ell}(z_0)}(|\na u|^{p-1}+a(z)|\na u|^{q-1})\,dz\\
			&\qquad+c\frac{\ell^2}{R}\fiint_{Q_{R,\ell}(z_0)}(|F|^{p-1}+a(z)|F|^{q-1})\,dz\,.
		\end{split}
	\end{align*}
	This completes the proof.
\end{proof}

We will use the following version of the parabolic Poincar\'e type inequality.
\begin{lemma}\label{poincare_lem}
	Let $u$ be a weak solution to \eqref{eq} in $\Omega_T$. Then for every $Q_{R,\ell}(z_0)\subset\Omega_T$ with $R,\ell>0$, $m\in(1,q]$ and $\theta\in(1/m,1]$, there exists a constant $c=c(n,N,m,L)>0$, such that
	\begin{align*}
    		\begin{split}
			&\fiint_{Q_{R,\ell}(z_0)}\frac{|u-u_{Q_{R,\ell}(z_0)}|^{\theta m}}{R^{\theta m}}\,dz\\
            &\le c\fiint_{Q_{R,\ell}(z_0)}|\na u|^{\theta m}\,dz\\
			&\qquad+c\left(\frac{\ell^2}{R^2}\fiint_{Q_{R,\ell}(z_0)}(|\na u|^{p-1}+a(z)|\na u|^{q-1}+|F|^{p-1}+a(z)|F|^{q-1})
   \,dz\right)^{\theta m}\,.
		\end{split}
	\end{align*}
\end{lemma}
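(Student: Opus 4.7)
The plan is to combine a standard spatial Poincar\'e inequality with the temporal gluing estimate from Lemma~\ref{sec3:lem:2}. I would fix a nonnegative cutoff $\eta\in C_0^\infty(B_R(x_0))$ satisfying~\eqref{sec3:34}, write $(u)_{B_R(x_0)}(t):=\fint_{B_R(x_0)}u(x,t)\,dx$ for the unweighted spatial mean, and split
\[
|u-u_{Q_{R,\ell}(z_0)}|^{\theta m}\le c\,|u-(u)_{B_R(x_0)}(t)|^{\theta m}+c\,|(u)_{B_R(x_0)}(t)-u_{Q_{R,\ell}(z_0)}|^{\theta m}.
\]
The first summand is handled on every time slice by the classical $L^{\theta m}$-Poincar\'e inequality on $B_R(x_0)$, which requires only $\theta m\ge1$ (equivalent to the hypothesis $\theta>1/m$). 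After dividing by $R^{\theta m}$ and integrating in $t$, this piece yields the first right-hand side term of the claim.

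For the temporal piece I would first use Jensen's inequality to bound $|(u)_{B_R(x_0)}(t)-u_{Q_{R,\ell}(z_0)}|^{\theta m}$ by $\fint_{I_\ell(t_0)}|(u)_{B_R(x_0)}(t)-(u)_{B_R(x_0)}(s)|^{\theta m}\,ds$, and then estimate the pointwise difference by inserting the weighted mean $(u\eta)_{B_R(x_0)}(\tau)$:
\[
|(u)_{B_R(x_0)}(t)-(u)_{B_R(x_0)}(s)|\le \sum_{\tau\in\{t,s\}}|(u)_{B_R(x_0)}(\tau)-(u\eta)_{B_R(x_0)}(\tau)|+|(u\eta)_{B_R(x_0)}(t)-(u\eta)_{B_R(x_0)}(s)|.
\]
The normalisation $\fint_{B_R(x_0)}\eta\,dx=1$ gives the identity
\[
(u)_{B_R(x_0)}(\tau)-(u\eta)_{B_R(x_0)}(\tau)=\fint_{B_R(x_0)}\bigl(u(\cdot,\tau)-(u)_{B_R(x_0)}(\tau)\bigr)(1-\eta)\,dx,
\]
so $\|\eta\|_{L^\infty(B_R(x_0))}\le c$ and the $L^1$-Poincar\'e inequality yield $|(u)_{B_R(x_0)}(\tau)-(u\eta)_{B_R(x_0)}(\tau)|\le cR\fint_{B_R(x_0)}|\na u(\cdot,\tau)|\,dx$. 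The remaining middle term is precisely what Lemma~\ref{sec3:lem:2} controls, providing the factor $\frac{\ell^2}{R}\fiint_{Q_{R,\ell}(z_0)}(|\na u|^{p-1}+a(z)|\na u|^{q-1}+|F|^{p-1}+a(z)|F|^{q-1})\,dz$.

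Assembling the three pieces, raising to the power $\theta m$, averaging in $s$ and $t$, and applying Jensen once more (valid since $\theta m\ge1$) to upgrade $\fint_{B_R(x_0)}|\na u|\,dx$ into $\fiint_{Q_{R,\ell}(z_0)}|\na u|^{\theta m}\,dz$, one obtains the temporal contribution exactly in the form $cR^{\theta m}\fiint_{Q_{R,\ell}(z_0)}|\na u|^{\theta m}\,dz+c\bigl(\frac{\ell^2}{R}\fiint(\cdots)\,dz\bigr)^{\theta m}$. Dividing everything by $R^{\theta m}$ and combining with the spatial piece produces the stated inequality, since $\frac{\ell^2}{R}\cdot\frac{1}{R}=\frac{\ell^2}{R^2}$. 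The only mildly delicate point is the passage from weighted to unweighted spatial means, which is what forces the spatial Poincar\'e step for $1-\eta$; beyond that the argument is the standard Acerbi--Mingione parabolic Poincar\'e scheme, and the lower bound $\theta>1/m$ is what makes every invocation of Jensen's inequality above go in the correct direction.
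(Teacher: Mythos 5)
Your proposal follows the paper's proof essentially step by step: the same triangle-inequality split into a spatial piece (handled by the $L^{\theta m}$ Poincar\'e inequality on slices) and a temporal piece; the same insertion of the weighted mean $(u\eta)_{B_R(x_0)}(\tau)$ under the normalization $\fint_{B_R(x_0)}\eta\,dx=1$; the same use of the $L^1$-Poincar\'e inequality together with Jensen to bound $|(u)_{B_R(x_0)}(\tau)-(u\eta)_{B_R(x_0)}(\tau)|$; and the same appeal to Lemma~\ref{sec3:lem:2} to control the time oscillation of $(u\eta)_{B_R(x_0)}$. The only cosmetic difference is that you phrase the identity via $1-\eta$ while the paper writes it directly with $\eta$, which is the same computation.
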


\begin{proof}
	The triangle inequality gives
	\begin{align*}
		\begin{split}
			&\fiint_{Q_{R,\ell}(z_0)}\frac{|u-(u)_{Q_{R,\ell}(z_0)}|^{\theta m}}{R^{\theta m}}\,dz\\
			&\le c\fiint_{Q_{R,\ell}(z_0)}\frac{|u(x,t)-(u)_{B_R(x_0)}(t)|^{\theta m}}{R^{\theta m}}\,dz\\
			&\qquad+c\fint_{I_{\ell}(t_0)}\frac{|(u)_{Q_{R,\ell}(z_0)}-(u)_{B_R(x_0)}(t)|^{\theta m}}{R^{\theta m}}\,dt\,,
		\end{split}
	\end{align*}
	where $c=c(m)$. By applying the Poincar\'e inequality in the spatial direction, we have
	\begin{align*}
		\begin{split}
        &\fiint_{Q_{R,\ell}(z_0)}\frac{|u-(u)_{Q_{R,\ell}(z_0)}|^{\theta m}}{R^{\theta m}}\,dz\\
			&\le c\fiint_{Q_{R,\ell}(z_0)}|\na u|^{\theta m}\,dz
            +c\fint_{I_{\ell}(t_0)}\frac{|(u)_{Q_{R,\ell}(z_0)}-(u)_{B_R(x_0)}(t)|^{\theta m}}{R^{\theta m}}\,dt\,,
		\end{split}
	\end{align*}
where $c=c(n,N,m)$. It remains to estimate the last term. For this, we observe
	\begin{align*}
			\begin{split}
			&\fint_{I_{\ell}(t_0)}|(u)_{Q_{R,\ell}(z_0)}-(u)_{B_R(x_0)}(t)|^{\theta m}\,dt\\
			&\le \fint_{I_{\ell}(t_0)}\fint_{I_{\ell}(t_0)}|(u)_{B_R(x_0)}(\tau)-(u)_{B_R(x_0)}(t)|^{\theta m}\,dt \,d\tau.
			\end{split}
	\end{align*}
	Taking $\eta\in C_0^\infty(B_R(x_0))$ satisfying \eqref{sec3:34}, we get for $c=c(m)$
	\begin{align*}
		\begin{split}
			&\fint_{I_{\ell}(t_0)}\fint_{I_{\ell}(t_0)}|(u)_{B_R(x_0)}(\tau)-(u)_{B_R(x_0)}(t)|^{\theta m}\,dt \,d\tau \\
			&\le c\fint_{I_{\ell}(t_0)}|(u\eta)_{B_R(x_0)}(t)-(u)_{B_R(x_0)}(t)|^{\theta m}\,dt\\
			&\qquad+c\sup_{t,\tau\in I_\ell(t_0)}|(u\eta)_{B_R(x_0)}(t)-(u\eta)_{B_R(x_0)}(\tau)|^{\theta m}\,.
		\end{split}
	\end{align*}
	The last term is estimated by Lemma~\ref{sec3:lem:2}. In the remaining of the proof, we estimate the first term on the right hand side.
	By \eqref{sec3:34} we have
	\begin{align*}
		\begin{split}
			&\fint_{I_{\ell}(t_0)}|(u\eta)_{B_R(x_0)}(t)-(u)_{B_R(x_0)}(t)|^{\theta m}\,dt\\
			&=\fint_{I_{\ell}(t_0)}\left|\fint_{B_{R}(x_0)} (u(x,t)-(u)_{B_R(x_0)}(t))\eta(x)\,dx\right|^{\theta m}\,dt\\
			&\le c \fint_{I_{\ell}(t_0)}\left(\fint_{B_{R}(x_0)} |u(x,t)-(u)_{B_R(x_0)}(t)|\,dx\right)^{\theta m}\,dt\,.
		\end{split}
	\end{align*}
	Therefore, using the Poincar\'e inequality in the spatial direction and H\"older's inequality, we obtain
	\begin{align*}
		\fint_{I_{\ell}(t_0)}|(u\eta)_{B_R(x_0)}(t)-(u)_{B_R(x_0)}(t)|^{\theta m}\,dt\le cR^{\theta m}\fiint_{Q_{R,\ell
        }(z_0)}|\na u|^{\theta m}\,dz\,.
	\end{align*}
	The proof is completed.
\end{proof}

\section{Reverse Hölder inequality}\label{sec:rev-hold}
Let $u$ be a weak solution to \eqref{eq} in $\Omega_T$. In this section, we provide a reverse Hölder inequality for $\na u$. Throughout this section, consider 
\begin{align}\label{upper_level_set}
\Psi(\varkappa)=\{ z\in \Omega_T: H(z,|\na u(z)|)>\varkappa\}\,,
\end{align}
for some 
$\varkappa>1+\|a\|_{L^\infty(\Omega_T)}\geq H(z,1)$ for all $z\in\Om_T$.
Note that $H(z,s)$ is strictly increasing and continuous with respect to nonnegative $s$ variable with
\[
	\lim_{s\to0^+}H(z,s)=0
	\quad\text{and}\quad
	\lim_{s\to\infty}H(z,s)=\infty\,.
\] 
Therefore, by the intermediate value theorem for continuous functions, we find a unique $s=s(z)>1$ such that
\[
	\varkappa=H(z,s)=s^p+a(z)s^q\,.
\]
We let
\begin{align}\label{K_and_kappa}
    K=2+10\ca\|u\|_{L^\infty(\Om_T)}^{q-p}\quad\text{and}\quad\cv = 10(1+\cc^{q}\|u\|_{L^\infty(\Om_T)}^{q-p}+\ca +10\cc\ca )\,,
\end{align}
where $\cc=\cc(n,p,q,\nu,L,\|u\|_{L^\infty(\Om_T)},\|H(z,|F|)\|_{L^\gamma(\Om_T)})$ with $\gamma=\tfrac{n+p}{p}$ is a constant playing an important role in Lemma~\ref{lemma_decay}.

Suppose $z_0\in \Psi(\La)$ with $\La>1+\|a\|_{L^\infty(\Om_T)}$ and there exists $\rho\in(0,1)$ such that $Q_{2\kappa\rho}^\la(z_0)\subset \Om_T$ where $\La=H(z_0,\la)$ for $\la=\la(z_0)$. The reasoning is performed separately for the $p$-intrinsic cylinders and the $( p, q)$-intrinsic cylinders. More precisely, we distinguish
\begin{enumerate}
    \item[{\it (p-1)}]\label{p1} $p$-intrinsic case
    \[
    K\ge\left( \frac{\|u\|_{L^\infty(\Om_T)}}{\rho}  \right)^{q-p}\sup_{z\in Q_{4\rho}^\la(z_0)}a(z)\, ;
    \]
\end{enumerate}
where we set 
\begin{enumerate}
    \item[{\it (p-2)}]the stopping time argument as\label{p2}
    \begin{enumerate}
        \item[{\it (p-2-i)}]\label{p3} $\fiint_{Q_{\rho}^\la(z_0)}
        (
        H(z,|\na u|)
        +H(z,|F|) )
        \,dz= \la^p\,,$
        \item[{\it (p-2-ii)}]\label{p4} $\fiint_{Q_{s}^\la(z_0)}
        (
        H(z,|\na u|)
        +H(z,|F|) )
        \,dz< \la^p$ for every $s\in(\rho,2\cv\rho]\,$. 
    \end{enumerate}
\end{enumerate}
On the other hand, the alternative is the $(p,q)$-intrinsic case, where apart from  the condition complementary to~\ref{p1}, i.e.,  \[
    K\le\left( \frac{\|u\|_{L^\infty(\Om_T)}}{\rho}  \right)^{q-p}\sup_{z\in Q_{4\rho}^\la(z_0)}a(z)\, ;
    \] we assume
\begin{enumerate} 
    \item[{\it (p,q-1)}]\label{q1} the comparability of $a(\cdot)$
    \[
    \frac{a(z_0)}{2\ca}< a(z)< 2\ca  a(z_0) \quad\text{for every} \quad z\in G_{4\rho}^\la(z_0)\,.
    \]
    \item[{\it (p,q-2)}]\label{q2} Stopping time argument for a $(p,q)$-intrinsic cylinder reads
    \begin{enumerate}
        \item[{\it (p,q-2-i)}]\label{q4} $\fiint_{G_{\rho}^\la(z_0)}
        (
        H(z,|\na u|)
        +H(z,|F|) )
        \,dz=\La\,,$
        \item[{\it (p,q-2-ii)}]\label{q5} $\fiint_{G_{s}^\la(z_0)}
        (
        H(z,|\na u|)
        +H(z,|F|) )
        \,dz< \La$ for every $s\in(\rho,2\cv\rho]\,$.
    \end{enumerate}
\end{enumerate}
The proof that upon the range \eqref{range_pq} this set of conditions describes all scenarios is presented in Section~\ref{stopping time argument}.

\subsection{The \texorpdfstring{$p$}{p}-intrinsic case}
In this subsection, we prove the reverse Hölder inequality provided in Lemma~\ref{p_reverse_lem} and its consequence ready to apply in the main proof, namely Proposition~\ref{p_est_vitali}.
We begin with the following decay estimate that take into account the contribution of the nonzero forcing term.
\begin{lemma}\label{lemma_decay}
   Suppose \ref{p1} and \ref{p2}. Then there exists a constant $\cc=\cc(\data)\ge K^\frac{1}{p}$
   such that
   \[
   \rho\le \cc \la^{-1}\,.
   \]
\end{lemma}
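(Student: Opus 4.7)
The approach is to combine the Caccioppoli inequality of Lemma~\ref{caccio_lem} with the stopping-time identity \ref{p3} and Hölder's inequality exploiting $H(z,|F|)\in L^\gamma(\Om_T)$. Starting from \ref{p3},
\[
\la^p = \fiint_{Q_\rho^\la(z_0)} H(z,|\na u|)\,dz + \fiint_{Q_\rho^\la(z_0)} H(z,|F|)\,dz,
\]
I first estimate the energy term by applying Lemma~\ref{caccio_lem} on the pair $Q_\rho^\la \subset Q_{2\rho}^\la$, which is admissible since $Q_{2\cv\rho}^\la \subset \Om_T$ and $\cv\ge 10$. Writing $M := \|u\|_{L^\infty(\Om_T)}/\rho$ and using $|u - u_{Q_{2\rho}^\la}| \le 2\|u\|_{L^\infty(\Om_T)}$ together with $R-r=\rho$ and $\ell^2-\tau^2 = 3\la^{2-p}\rho^2$, the right-hand side of Caccioppoli is bounded by
\[
c\,M^p + c\,\bigl(\sup_{Q_{2\rho}^\la} a\bigr)M^q + c\,M^2 \la^{p-2} + c\fiint_{Q_{2\rho}^\la} H(z,|F|)\,dz.
\]
The $p$-intrinsic hypothesis \ref{p1} (together with $Q_{2\rho}^\la \subset Q_{4\rho}^\la$) yields $\sup_{Q_{2\rho}^\la} a \cdot M^q \leq K\,M^p$, while Young's inequality with exponents $p/2$ and $p/(p-2)$ absorbs $M^2 \la^{p-2}$ into $\epsilon\la^p + c_\epsilon M^p$ (the term is trivially $M^p$ when $p=2$), with $\epsilon$ chosen small enough so that $\epsilon\la^p$ can be reabsorbed on the left after reinserting into the stopping identity.

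For the remaining forcing average I then apply Hölder's inequality with exponent $\gamma = (n+p)/p$:
\[
\fiint_{Q_{2\rho}^\la(z_0)} H(z,|F|)\,dz \le |Q_{2\rho}^\la|^{-1/\gamma}\,\|H(z,|F|)\|_{L^\gamma(\Om_T)},
\]
and I use $|Q_{2\rho}^\la| \simeq \rho^{n+2}\la^{2-p}$. Multiplying through by $\rho^p$ and using the algebraic identity $p - (n+2)/\gamma = (p-2)/\gamma = p(p-2)/(n+p)$, I reach
\[
(\rho\la)^p \le c(1+K)\|u\|_{L^\infty(\Om_T)}^p + c\,(\rho\la)^{p(p-2)/(n+p)}\,\|H(z,|F|)\|_{L^\gamma(\Om_T)}.
\]
Since $p(p-2)/(n+p) < p$ (vanishing when $p=2$), a second application of Young's inequality with the dual pair $\bigl(\tfrac{n+p}{p-2},\tfrac{n+p}{n+2}\bigr)$ absorbs the last summand into $\tfrac12(\rho\la)^p$, leaving
\[
(\rho\la)^p \le c(1+K)\|u\|_{L^\infty(\Om_T)}^p + c\,\|H(z,|F|)\|_{L^\gamma(\Om_T)}^{(n+p)/(n+2)}.
\]
Taking the $p$-th root and, if necessary, enlarging the resulting constant so that it exceeds $K^{1/p}$, I obtain the desired bound $\rho \le \cc\,\la^{-1}$ with $\cc = \cc(\data)$.

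The main technical delicacy lies in the scaling bookkeeping for the $F$-term: the Lebesgue exponent $\gamma = (n+p)/p$ is precisely what makes $\rho^{p-(n+2)/\gamma}\la^{(p-2)/\gamma}$ collapse to a power of $\rho\la$ strictly below $p$, so Young's inequality can close the estimate. Note that only \ref{p3} is actually used in this argument; the strict-inequality stopping condition \ref{p4} will enter later in the reverse Hölder analysis, while here it merely ensures well-posedness of the setup.
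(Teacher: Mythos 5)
Your proof is correct and follows essentially the same path as the paper: start from the stopping-time identity \textit{(p-2-i)}, apply the Caccioppoli inequality of Lemma~\ref{caccio_lem} on the $p$-intrinsic cylinders, use \textit{(p-1)} to reduce the $q$-phase to the $p$-phase (getting the factor $1+K$), absorb the $L^2$-term with Young's inequality using $p\ge 2$, and estimate the forcing average via H\"older and Young exploiting $H(z,|F|)\in L^\gamma$ with $\gamma=\tfrac{n+p}{p}$, arriving at $(\rho\la)^p\le c(\data)$. The only cosmetic difference is that you multiply through by $\rho^p$ and track powers of $\rho\la$, whereas the paper keeps the estimate in the form $\la^p\le c\rho^{-p}$; the algebra (including the identity $p-(n+2)/\gamma=(p-2)/\gamma$ and the final exponent $(n+p)/(n+2)$) is the same, and your remark that only \textit{(p-2-i)}, not \textit{(p-2-ii)}, is actually used here matches the paper's argument as well.
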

\begin{proof}
    We apply Lemma~\ref{caccio_lem} with $r=\rho$, $R=2\rho$, $\tau^2=\la^{2-p}\rho^2$ and $\ell^2=2\la^{2-p}\rho^2$. Then along with \ref{p3}, we have
    	\begin{align*}
		\begin{split}
			\la^p&=\fiint_{Q_{\rho}^\la(z_0)}H(z,|\na u|)\,dz+\fiint_{Q_{\rho}^\la(z_0)}H(z,|F|)\,dz\\
			&\le c\fiint_{Q_{2\rho}^\la(z_0)}\left(\frac{|u-u_{Q_{2\rho}^\la(z_0)}|^p}{\rho^p}+a(z)\frac{|u-u_{Q_{2\rho}^\la(z_0)}|^q}{\rho^q}\right)\,dz\\
			&\qquad+c\la^{p-2}\fiint_{Q_{2\rho}^\la(z_0)}\frac{|u-u_{Q_{2\rho}^\la(z_0)}|^2}{\rho^2}\,dz+c\fiint_{Q_{2\rho}^\la(z_0)}H(z,|F|)\,dz\,,
		\end{split}
	\end{align*}
    where $c=c(n,p,q,\nu,L)$. For the first term on the right hand side, we employ \ref{p1} to obtain
    \begin{align*}
        \begin{split}
            &\fiint_{Q_{2\rho}^\la(z_0)}\left(\frac{|u-u_{Q_{2\rho}^\la(z_0)}|^p}{\rho^p}+a(z)\frac{|u-u_{Q_{2\rho}^\la(z_0)}|^q}{\rho^q}\right)\,dz\\
            &\le c\left( \frac{\|u\|_{L^\infty(\Om_T)}^p}{\rho^p}  +\sup_{z\in Q_{2\rho}^\la(z_0)}a(z)\frac{\|u\|_{L^\infty(\Om_T)}^q}{\rho^q}   \right)\\
            &\le c(1+K)\frac{\|u\|_{L^\infty(\Om_T)}^p}{\rho^p}\,.
        \end{split}
    \end{align*}
    To estimate the second term on the right hand side, we apply Young's inequality and use the fact that $p\ge 2$ to get
    \begin{align*}
        \begin{split}
            c\la^{p-2}\fiint_{Q_{2\rho}^\la(z_0)}\frac{|u-u_{Q_{2\rho}^\la(z_0)}|^2}{\rho^2}\,dz
            &\le c\la^{p-2}\frac{\|u\|^2_{L^\infty(\Om_T)}}{\rho^2} \\
            &\le \frac{1}{4}\la^p+ c\frac{\|u\|^p_{L^\infty(\Om_T)}}{\rho^p}\,.
        \end{split}
    \end{align*}
    For the last term, we apply H\"older's inequality and Young's inequality to have
    \begin{align*}
        \begin{split}
            c\fiint_{Q_{2\rho}^\la(z_0)}H(z,|F|)\,dz
            &\le c\left( \fiint_{Q_{2\rho}^\la(z_0)}H(z,|F|)^\gamma\,dz\right)^\frac{1}{\gamma}\\
            &\le c\la^\frac{p-2}{\gamma}\left(\frac{1}{\rho^{n+2}} 
\iint_{\Om_T}H(z,|F|)^\gamma\,dz  \right)^\frac{1}{\gamma}\\
&\le \frac{1}{4}\lambda^p+ c\left(\frac{1}{\rho^{n+2}} 
\iint_{\Om_T}H(z,|F|)^\gamma\,dz  \right)^\frac{p}{n+2}\,,
        \end{split}
    \end{align*}
    where we used the fact that
    \[
    \frac{1}{\gamma}\left( 1-\frac{p-2}{\gamma p} \right)^{-1}=\frac{p}{n+2}\,.
    \]

Combining these estimates, we get
\begin{align*}
    \begin{split}
        \la^p
        &\le c (1+K)(1+\|  u\|_{L^\infty(\Om_T)}^p + \|H(z,|F|)\|_{L^\gamma(\Om_T)}^{\frac{p\gamma}{n+2}}  )\rho^{-p}\,.
    \end{split}
\end{align*}
    This completes the proof.
\end{proof}

\begin{remark}
    The condition $H(z,|F|)\in L^\gamma(\Om_T)$ for $\gamma=\tfrac{n+p}{p}$ is used only for the lemma above. 
    Since the integrability on $|F|$ is connected with the integrability of $u$, it seems that the term $H(z,|F|)\in L^\gamma(\Om_T)$ appears as we display the formulation of the Caccioppoli inequality regarding $\|u\|_{L_{\loc}^\infty(\Om_T)}$. Indeed, it is known that local $L^\infty$ estimate of $u$ holds for the heat equation when the source term $|F|\in L^{I}$ with $I> n+2$.
\end{remark}

As in the proof of previous lemma, the main idea of the $p$-intrinsic case is to reduce the right hand side of the Caccioppoli type inequality to the the right hand side of the Caccioppoli type inequality of the $p$-Laplace equation.
In the next lemma, we recover the phase criterion in \cite{KKM}.
\begin{lemma}\label{p_intr}
    Suppose \ref{p1} and \ref{p2}. Let  $\cc$ be given in Lemma~\ref{lemma_decay}. Then  
    \[
    \sup_{z\in Q_{4\rho}^\la(z_0)} a(z)\le \cc^{q}\|u\|_{L^\infty(\Om_T)}^{p-q}\la^{p-q}\,.
    \] 
\end{lemma}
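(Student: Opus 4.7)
The statement reduces to combining two ingredients that are already in hand: the $p$-intrinsic hypothesis \ref{p1}, which bounds $\sup a$ in terms of $\rho$ and $\|u\|_{L^\infty(\Om_T)}$, and the decay estimate of Lemma~\ref{lemma_decay}, which converts smallness of $\rho$ into largeness of $\la$. The plan is essentially one algebraic manipulation; there is no genuine analytic obstacle.

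First I would rewrite hypothesis \ref{p1} in the equivalent form
\[
\sup_{z\in Q_{4\rho}^\la(z_0)} a(z) \le K\, \|u\|_{L^\infty(\Om_T)}^{p-q}\, \rho^{q-p},
\]
which is legitimate since $q>p$ and $\|u\|_{L^\infty(\Om_T)}>0$ (otherwise the statement is trivial).

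Next I would invoke Lemma~\ref{lemma_decay}, which gives $\rho \le \cc\, \la^{-1}$ with $\cc\ge K^{1/p}$. Raising both sides to the positive power $q-p$ yields
\[
\rho^{q-p}\le \cc^{\,q-p}\,\la^{-(q-p)} = \cc^{\,q-p}\,\la^{p-q}.
\]
Substituting this into the previous display produces
\[
\sup_{z\in Q_{4\rho}^\la(z_0)} a(z) \le K\, \cc^{\,q-p}\, \|u\|_{L^\infty(\Om_T)}^{p-q}\, \la^{p-q}.
\]

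Finally, to obtain the claimed constant $\cc^q$ I would use the lower bound $\cc\ge K^{1/p}$ (guaranteed by Lemma~\ref{lemma_decay}), which gives $K\le \cc^{p}$, so that $K\cc^{q-p}\le \cc^{q}$. This yields
\[
\sup_{z\in Q_{4\rho}^\la(z_0)} a(z) \le \cc^{\,q}\, \|u\|_{L^\infty(\Om_T)}^{p-q}\, \la^{p-q},
\]
completing the proof. The only subtle point, which I would flag explicitly when writing this up in detail, is that the constant $\cc$ in Lemma~\ref{lemma_decay} was engineered precisely so that $\cc\ge K^{1/p}$, so that the final absorption step goes through in a single clean constant; without that convention one would have to carry both $K$ and $\cc$ through the rest of the paper.
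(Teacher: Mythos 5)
Your argument is correct and coincides with the paper's own proof: both start from \ref{p1} in the form $\sup a \le K\|u\|^{p-q}_{L^\infty}\rho^{q-p}$, substitute $\rho\le\cc\la^{-1}$ from Lemma~\ref{lemma_decay}, and then absorb $K$ into $\cc^q$ via $\cc\ge K^{1/p}$. Nothing to add.
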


\begin{proof}
Using \ref{p1} and Lemma~\ref{lemma_decay}, we get 
    \[
    \sup_{z\in Q_{4\rho}^\la(z_0)} a(z)\le K \|u\|^{p-q}_{L^\infty(\Om_T)}\rho^{q-p}\le  K\|u\|_{L^\infty(\Om_T)}^{p-q}\cc^{q-p}\la^{p-q}\,,
    \]
The the proof is completed by using the fact that $\cc>K^\frac{1}{p}$.
\end{proof}

To estimate the right hand side of the Caccioppoli inequality, we further estimate the second term on the right hand side of Lemma~\ref{poincare_lem} in the $p$-intrinsic geometry. The next lemma immediately follows from the previous lemma.

\begin{lemma}\label{p_poincare_pre}
	Suppose \ref{p1} and \ref{p2}.
    For $s\in[2\rho,4\rho]$, 
	there exists a constant $c=c(\data)$ such that
	\begin{align*}
		\begin{split}
			&\fiint_{Q_{s}^\la(z_0)}(|\na u|^{p-1}+a(z)|\na u|^{q-1}
            +|F|^{p-1}+a(z)|F|^{q-1}
            )\,dz\\
			&\le \fiint_{Q_{s}^\la(z_0)}
            (
            |\na u|
            +|F|)
            ^{p-1}\,dz 
			+c\la^{-1+\frac{p}{q}}\fiint_{Q_{s}^\la(z_0)}a(z)^\frac{q-1}{q}
            (
            |\na u|
            +|F|)
            ^{q-1}\,dz\,.
		\end{split}
	\end{align*}
\end{lemma}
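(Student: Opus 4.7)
The statement is essentially algebraic once one has the pointwise bound on $a$ provided by Lemma~\ref{p_intr}, so the plan is to split the integrand into the two pure $p$-power terms and the two weighted $q$-power terms, and to treat each pair by a separate pointwise inequality before integrating. The only ingredient besides elementary convexity is the supremum bound $\sup_{Q_{4\rho}^\la(z_0)} a \le \cc^{q}\|u\|_{L^\infty(\Om_T)}^{p-q}\la^{p-q}$ from Lemma~\ref{p_intr}, which applies to $Q_s^\la(z_0)$ because $s\le 4\rho$.

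\textbf{Step 1 ($p$-terms).} Since $p\ge 2$, a short calculus argument (differentiate $(1+t)^{p-1}-1-t^{p-1}$) gives the superadditivity $A^{p-1}+B^{p-1}\le (A+B)^{p-1}$ for $A,B\ge 0$. Applied pointwise to $A=|\na u|$ and $B=|F|$, this produces
\[
|\na u|^{p-1}+|F|^{p-1}\le (|\na u|+|F|)^{p-1},
\]
which is exactly the first term on the right-hand side of the desired inequality (with constant $1$). The analogous inequality with $q-1$ in place of $p-1$ holds as well, since $q\ge p\ge 2$.

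\textbf{Step 2 ($q$-terms).} Factor the weight as $a(z) = a(z)^{1/q}\cdot a(z)^{(q-1)/q}$ and apply Lemma~\ref{p_intr} to the first factor on $Q_s^\la(z_0)\subset Q_{4\rho}^\la(z_0)$:
\[
a(z)^{1/q}\le \bigl(\cc^{q}\|u\|_{L^\infty(\Om_T)}^{p-q}\la^{p-q}\bigr)^{1/q} = \cc\,\|u\|_{L^\infty(\Om_T)}^{(p-q)/q}\,\la^{-1+\frac{p}{q}}.
\]
Combining with the superadditivity inequality from Step~1 (for the exponent $q-1$) yields the pointwise bound
\[
a(z)\bigl(|\na u|^{q-1}+|F|^{q-1}\bigr)\le c\,\la^{-1+\frac{p}{q}}\,a(z)^{\frac{q-1}{q}}(|\na u|+|F|)^{q-1},
\]
with $c=c(\data)$ since $\cc$ and $\|u\|_{L^\infty(\Om_T)}$ both belong to $\data$.

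\textbf{Step 3.} Add the bounds from Steps~1 and~2 pointwise and integrate over $Q_s^\la(z_0)$ to finish.

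\textbf{Obstacles.} There are essentially none of substance; the proof is a direct consequence of Lemma~\ref{p_intr} paired with a one-line convexity fact, which is precisely what the author's remark ``immediately follows from the previous lemma'' signals. The only point that requires care is bookkeeping the exponent $(p-q)/q=-1+p/q$ so that the resulting factor matches the prefactor stated in the lemma; this is purely arithmetic.
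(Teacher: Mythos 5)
Your proof is correct and matches the route the paper intends: the paper only remarks that the lemma ``immediately follows from'' Lemma~\ref{p_intr}, and your two ingredients — the elementary superadditivity $A^{m}+B^{m}\le (A+B)^{m}$ for $m\ge 1$ (valid here since $p-1\ge 1$ and $q-1\ge 1$) and the bound $a(z)^{1/q}\le \cc\,\|u\|_{L^\infty(\Om_T)}^{(p-q)/q}\la^{-1+p/q}$ on $Q_s^\la(z_0)\subset Q_{4\rho}^\la(z_0)$ from Lemma~\ref{p_intr} — are exactly what that remark is pointing to, with the resulting constant $c=\cc\,\|u\|_{L^\infty(\Om_T)}^{(p-q)/q}$ indeed a $c(\data)$.
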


Next, we provide a $p$-intrinsic parabolic Poincar\'e inequality.
\begin{lemma}\label{p_poincare}
Suppose \ref{p1} and \ref{p2}.
	For $s\in[2\rho,4\rho]$ and $\theta\in((q-1)/q,1]$, 
	there exists a constant $c=c(\data)$, such that 
	\begin{align*}
		\fiint_{Q_{s}^\la(z_0)}\frac{|u-u_{Q_{s}^\la(z_0)}|^{\theta p}}{s^{\theta p}}\,dz
			\le c\fiint_{Q_{s}^\la(z_0)} ( H(z,|\na u|)+ H(z,|F|)   )^\theta\,dz\,.
	\end{align*}
\end{lemma}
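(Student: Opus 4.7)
The plan is to apply the generic parabolic Poincar\'e estimate of Lemma~\ref{poincare_lem} to the $p$-intrinsic cylinder $Q_s^\la(z_0) = Q_{R,\ell}(z_0)$ with $R = s$, $\ell = \la^{(2-p)/2}s$ (so that $\ell^2/R^2 = \la^{2-p}$), and with the exponent choice $m = p$. The admissibility requirement $\theta > 1/m = 1/p$ holds since under \eqref{range_pq} we have $\theta > (q-1)/q \ge (p-1)/p > 1/p$. This produces two contributions: a pure gradient piece $\fiint |\na u|^{\theta p}\,dz$, which is dominated at once by $\fiint H(z,|\na u|)^\theta\,dz$ via the pointwise inequality $|\na u|^p \le H(z,|\na u|)$; and a ``dual'' piece involving the mixed moments of order $p-1$ and $q-1$, which requires more work.

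For the latter piece I would first invoke Lemma~\ref{p_poincare_pre} to split the $(p-1)$- and $(q-1)$-contributions on the $p$-intrinsic scale, and then apply H\"older's inequality to each:  on the $(p-1)$-term with conjugate exponent $\theta p /(p-1)$ (admissible since $\theta > (p-1)/p$), and on the weighted $(q-1)$-term with conjugate exponent $\theta q/(q-1)$ (admissible precisely thanks to the sharp threshold $\theta > (q-1)/q$). Exploiting the pointwise bounds $(|\na u|+|F|)^{\theta p} \le c(H(z,|\na u|)^\theta + H(z,|F|)^\theta)$ and $a(z)^\theta(|\na u|+|F|)^{\theta q} \le c(H(z,|\na u|)^\theta + H(z,|F|)^\theta)$, and introducing the shorthand
\[
A := \fiint_{Q_s^\la(z_0)}\bigl(H(z,|\na u|)+H(z,|F|)\bigr)^\theta\,dz,
\]
the Hölder-type piece is then bounded by $c A^{(p-1)/(\theta p)} + c\la^{-1+p/q} A^{(q-1)/(\theta q)}$.

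To conclude, I multiply by $\la^{2-p}$ and raise to the $\theta p$-th power, closing the estimate with the stopping time property \ref{p4} combined with Jensen's inequality, which yields $A \le \la^{p\theta}$ on $Q_s^\la(z_0)$ for any $s \in [2\rho, 4\rho] \subset [\rho, 2\cv\rho]$ (since $\cv \ge 10$ by \eqref{K_and_kappa}). A short arithmetic check shows that the first resulting term $\la^{(2-p)\theta p} A^{p-1}$ is at most $cA$, because $\la^{(2-p)\theta p} A^{p-2} \le \la^{\theta p[(2-p)+(p-2)]} = 1$. For the second term, the exponent of $\la$ collapses identically to $0$ by the algebraic identity $q(1-p+p/q) + (pq - p - q) = 0$, so again one is left with $cA$. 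Summing up delivers the claim.

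The main obstacle, past bookkeeping, is that the two H\"older steps must close \emph{exactly} at the $p$-intrinsic scaling, with no residual power of $\la$. The integrability threshold $\theta > (q-1)/q$ is dictated by precisely this rigidity: it is the smallest exponent that renders H\"older admissible on the weighted $(q-1)$-moment, and the algebraic identity above confirms that the time-scaling $\ell = \la^{(2-p)/2}s$ is the unique one under which the estimate closes. The role of the forcing term $F$ is absorbed transparently because $H$ treats $|\na u|$ and $|F|$ symmetrically in the pointwise bounds used above.
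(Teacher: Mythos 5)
Your proposal is correct and follows essentially the same route as the paper: apply Lemma~\ref{poincare_lem} with $m=p$ at the $p$-intrinsic scale, use Lemma~\ref{p_poincare_pre} to organize the dual moments, apply H\"older with the conjugate exponents $\theta p/(p-1)$ and $\theta q/(q-1)$ (the latter forcing $\theta > (q-1)/q$), and close via \ref{p4}. The only cosmetic difference is that you collect everything into one shorthand quantity $A$ with the a priori bound $A\le\la^{p\theta}$ (by Jensen and \ref{p4}), whereas the paper keeps the $p$- and $q$-pieces of the integrand separate and cancels the $\la$-powers term by term; the underlying algebraic identity is identical. One slight imprecision to note: for your second term the exponent of $\la$ does not "collapse identically to zero" before the estimate; it vanishes only after inserting $A\le\la^{p\theta}$, just as you did for the first term, and this in turn uses $pq-p-q\ge 0$, which holds because $p\ge 2$ and $q>p$.
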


\begin{proof}
	By Lemma~\ref{poincare_lem} and Lemma~\ref{p_poincare_pre}, there exists a constant $c=c(\data)$ such that
	\begin{align*}
		\begin{split}
			&\fiint_{Q_{s}^\la(z_0)}\frac{|u-u_{Q_{s}^\la(z_0)}|^{\theta p}}{s^{\theta p}}\,dz\\
			&\le c\fiint_{Q_{s}^\la(z_0)}|\na u|^{\theta p}\,dz+c\left(\la^{2-p}\fiint_{Q_{s}^\la(z_0)} |\na u|^{p-1}+|F|^{p-1}\,dz\right)^{\theta p}\\
			&\qquad+c\left(\la^{1-p+\frac{p}{q}}\fiint_{Q_{s}^\la(z_0)}a(z)^\frac{q-1}{q} ( |\na u|^{q-1}+|F|^{q-1} )\,dz\right)^{\theta p}\,.
		\end{split}
	\end{align*}
    Applying H\"older's inequality to last two terms, we obtain
\begin{align*}
		\begin{split}
			&\fiint_{Q_{s}^\la(z_0)}\frac{|u-u_{Q_{s}^\la(z_0)}|^{\theta p}}{s^{\theta p}}\,dz\\
			&\le c\fiint_{Q_{s}^\la(z_0)}|\na u|^{\theta p}\,dz+c\la^{(2-p)\theta p}\left(\fiint_{Q_{s}^\la(z_0)} (|\na u|^{ p} +|F|^{ p})^\theta \,dz\right)^{p-1}\\
            &\qquad+c\la^{(1-p+\frac{p}{q})\theta p}\left(\fiint_{Q_{s}^\la(z_0)} (a(z) (|\na u|^{q} + |F|^{q}) )^\theta\,dz\right)^{\frac{(q-1) p}{q}}\,.
		\end{split}
	\end{align*}
    To proceed further, we use \ref{p4} to have
    \begin{align*}
        \begin{split}
            &\left(\fiint_{Q_{s}^\la(z_0)} (|\na u|^{ p} +|F|^{ p})^\theta \,dz\right)^{p-1}\\
            &= \left(\fiint_{Q_{s}^\la(z_0)} (|\na u|^{ p} +|F|^{ p})^\theta \,dz\right)^{p-2} \left(\fiint_{Q_{s}^\la(z_0)} (|\na u|^{ p} +|F|^{ p})^\theta \,dz\right) \\
            &\le \left(\fiint_{Q_{s}^\la(z_0)} (|\na u|^{ p} +|F|^{ p}) \,dz\right)^{(p-2)\theta} \left(\fiint_{Q_{s}^\la(z_0)} (|\na u|^{ p} +|F|^{ p})^\theta \,dz\right)\\
            &\le \la^{p(p-2)\theta}\left(\fiint_{Q_{s}^\la(z_0)} (|\na u|^{ p} +|F|^{ p})^\theta \,dz\right)\,.
        \end{split}
    \end{align*}
    Similarly, we have
    \begin{align*}
        \begin{split}
            &\left(\fiint_{Q_{s}^\la(z_0)} (a(z) (|\na u|^{q} + |F|^{q})^\theta\,dz\right)^{\frac{(q-1) p}{q}}\\
            &\le \left(\fiint_{Q_{s}^\la(z_0)} (a(z) (|\na u|^{q} + |F|^{q})^\theta\,dz\right)^{\frac{(q-1) p}{q} -1 } \left(\fiint_{Q_{s}^\la(z_0)} (a(z) (|\na u|^{q} + |F|^{q})^\theta\,dz\right) \\
            &\le \la^{ ( p-\frac{p}{q} -1)\theta p }\left(\fiint_{Q_{s}^\la(z_0)} (a(z) (|\na u|^{q} + |F|^{q})^\theta\,dz\right)\,.
        \end{split}
    \end{align*}
    Substituting these inequalities, the conclusion of lemma holds.
\end{proof}

With the aim of estimating $L^\infty$--$L^2$ term in the $p$-intrinsic cylinder, we denote
\[
S(u,Q_{s}^\la(z_0)):=\sup_{I_{s}^\la(t_0)}\fint_{B_{s}(x_0)}\frac{|u-u_{Q_{s}^\la(z_0)}|^2}{s^2}\,dx\,.
\] 
 
\begin{lemma}\label{p_sup_lem}
Suppose \ref{p1} and \ref{p2}.
	There exists a constant $c=c(\data)$ such that 
	\[
		S(u,Q_{2\rho}^\la(z_0))=\sup_{I^\la_{2\rho}(t_0)}\fint_{B_{2\rho}(x_0)}\frac{|u-u_{Q_{2\rho}^\la(z_0)}|^2}{\rho^2}\,dx\le c\la^2\,.
	\]	
\end{lemma}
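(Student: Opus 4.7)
The plan is to apply the Caccioppoli inequality (Lemma~\ref{caccio_lem}) on the $p$-intrinsic geometry and show that each of the three resulting right-hand side terms is at most $c\la^p$; the $\la^{2-p}$ time-scaling produced by the Caccioppoli left-hand side then converts this into the desired $c\la^2$. Concretely, I would take $r=2\rho$, $R=4\rho$, $\tau^2=4\la^{2-p}\rho^2$ and $\ell^2=16\la^{2-p}\rho^2$, so that $Q_{r,\tau}(z_0)=Q_{2\rho}^\la(z_0)$ and $Q_{R,\ell}(z_0)=Q_{4\rho}^\la(z_0)$, with $R-r=2\rho$ and $\ell^2-\tau^2=12\la^{2-p}\rho^2$. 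Since the Caccioppoli left-hand side equals $S(u,Q_{2\rho}^\la(z_0))/(4\la^{2-p})$, it remains to bound each term on the right-hand side by $c\la^p$.

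Writing $\bar u:=u_{Q_{4\rho}^\la(z_0)}$, the forcing term $\fiint_{Q_{4\rho}^\la(z_0)}H(z,|F|)\,dz$ is immediately at most $\la^p$ by the stopping-time condition \ref{p4} at $s=4\rho\le 2\cv\rho$ (here $\cv\ge 10$ by \eqref{K_and_kappa}). The unweighted $p$-growth term $\fiint|u-\bar u|^p/\rho^p\,dz$ is precisely what Lemma~\ref{p_poincare} delivers with $\theta=1$ and $s=4\rho$, once \ref{p4} is fed into its right-hand side. The $L^\infty$--$L^2$ Caccioppoli term is treated by Jensen's inequality with exponent $p/2\ge 1$: $\fiint|u-\bar u|^2\,dz\le\bigl(\fiint|u-\bar u|^p\,dz\bigr)^{2/p}\le c\la^2\rho^2$, and dividing by $\ell^2-\tau^2\sim\la^{2-p}\rho^2$ returns $c\la^p$.

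The main technical point is the $a$-weighted term $\fiint a(z)|u-\bar u|^q/\rho^q\,dz$. Here I would use $|u-\bar u|\le 2\|u\|_{L^\infty(\Om_T)}$ and $q\ge p$ to bound $a(z)|u-\bar u|^q\le(2\|u\|_{L^\infty(\Om_T)})^{q-p}a(z)|u-\bar u|^p$, and then exploit the $p$-intrinsic hypothesis \ref{p1} in the form $\sup_{Q_{4\rho}^\la(z_0)}a\le K(\rho/\|u\|_{L^\infty(\Om_T)})^{q-p}$ to kill simultaneously the $a$-weight and the excess power of $\rho$, so that the whole prefactor against $|u-\bar u|^p/\rho^p$ collapses to a constant depending on $K$ and $\|u\|_{L^\infty(\Om_T)}$. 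The already-controlled $p$-Poincar\'e quantity then yields $c\la^p$ for this piece as well, completing the estimate of the Caccioppoli right-hand side.

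This $a$-weighted reduction is exactly the heuristic spelled out in the introduction of Section~\ref{sec:rev-hold}: the $p$-intrinsic condition \ref{p1} is what turns the $q$-phase contribution into a genuine perturbation of the $p$-growth regime, making Lemma~\ref{p_poincare} and a $p$-Laplace-style Caccioppoli analysis applicable, and the a priori $L^\infty$ bound on $u$ is essential for converting $|u-\bar u|^q$ into $|u-\bar u|^p$ while keeping the overall constant within $\mathit{data}$.
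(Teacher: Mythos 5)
Your proposal is correct, but it takes a genuinely different and more economical route than the paper's own proof. The paper bounds the $L^2$ Caccioppoli term $\fiint|u-\bar u|^2/\rho_2^2\,dz$ by a \emph{spatial} Poincar\'e inequality followed by H\"older, which produces a factor $S(u,Q_{\rho_2}^\la(z_0))^{1/2}$ on the right; this makes the resulting inequality self-referential in $S$, and the authors must then run Young's inequality plus the iteration lemma (Lemma~\ref{tech_lem}) over a family of radii $2\rho\le\rho_1<\rho_2\le4\rho$ to absorb it. You instead apply Jensen's (power-mean) inequality $\fiint|u-\bar u|^2\,dz\le\bigl(\fiint|u-\bar u|^p\,dz\bigr)^{2/p}$, which is legitimate precisely because $p\ge2$, and then feed the result straight into Lemma~\ref{p_poincare} (with $\theta=1$, $s=4\rho$) and \ref{p4}. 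This closes the estimate in a single fixed pair of radii, dispensing entirely with the iteration. The treatment of the $a$-weighted term via \ref{p1} and the boundedness of $u$, and of the forcing term via \ref{p4}, is the same in both proofs. Your route is strictly simpler; the only price is that it leans on the hypothesis $p\ge2$ a second time (once for the $\la^{2-p}$ scaling, once for Jensen), but that hypothesis is already standing throughout the paper, so nothing is lost. As a minor remark worth keeping in mind: the normalisation of $S$ in the lemma's display uses $\rho^2$ in the denominator while the definition just above uses $(2\rho)^2$; this factor-of-four discrepancy is absorbed in $c$ and affects neither your proof nor the paper's.
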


\begin{proof}
	We take $2\rho\le \rho_1<\rho_2\le 4\rho$. Then we apply Lemma~\ref{caccio_lem} to have
	\begin{align*}
		\begin{split}
			&\la^{p-2}S(u,Q_{\rho_1}^\la(z_0))\\
			&\le \frac{c\rho_2^q}{(\rho_2-\rho_1)^q}\fiint_{Q_{\rho_2}^\la(z_0)}\left(\frac{|u-u_{Q_{\rho_2}^\la(z_0)}|^p}{\rho_2^p}+a(z)\frac{|u-u_{Q_{\rho_2}^\la(z_0)}|^q}{\rho_2^q}\right)\,dz\\
			&\qquad+\frac{c \rho_2^2}{(\rho_2-\rho_1)^2}\la^{p-2}\fiint_{Q_{\rho_2}^\la(z_0)}\frac{|u-u_{Q_{\rho_2}^\la(z_0)}|^2}{\rho_2^2}\,dz
            +c\fiint_{Q_{\rho_2}^\la(z_0)}H(z,|F|)\,dz\,,
		\end{split}
	\end{align*}
   where $c=c(n,p,q,\nu,L)$.
   To estimate the first integral, we use \ref{p1} to get
   \begin{align*}
       \begin{split}
           a(z)\frac{|u-u_{Q_{\rho_2}^\la(z_0)}|^q}{\rho_2^q}
           &= a(z)\frac{|u-u_{Q_{\rho_2}^\la(z_0)}|^{q-p}}{\rho_2^{q-p}}\frac{|u-u_{Q_{\rho_2}^\la(z_0)}|^p}{\rho_2^p}\\
           &\le c  \frac{|u-u_{Q_{\rho_2}^\la(z_0)}|^p}{\rho_2^p} \left( \sup_{z\in Q_{4\rho}^\la(z_0)} a(z) \right) \frac{\|u\|^{q-p}_{L^\infty(\Om_T)}}{\rho^{q-p}} \\
           &\le c\frac{|u-u_{Q_{\rho_2}^\la(z_0)}|^p}{\rho_2^p}\,,
       \end{split}
   \end{align*}
   where $c=c(\data)$. For the second integral, we use Poincar\'e inequality in the spatial direction to get
	\begin{align*}
		\begin{split}
			\fiint_{Q_{\rho_2}^\la(z_0)}\frac{|u-u_{Q_{\rho_2}^\la(z_0)}|^2}{\rho_2^2}\,dz &= \fint_{I_{\rho_2}^\la(t_0)} \fint_{B_{\rho_2}(x_0)}\frac{|u-u_{Q_{\rho_2}^\la(z_0)}|^2}{\rho_2^2}\,dx\,dt\\
			&\le c\fint_{I_{\rho_2}^\la(t_0)}\left(\fint_{B_{\rho_2}(x_0)} \Bigg(\frac{|u-u_{Q_{\rho_2}^\la(z_0)}|^p}{\rho_2^p}+|\na u|^p \Bigg)\,dx\right)^\frac{1}{p}\\
            &\qquad\times\left(\fint_{B_{\rho_2}(x_0)}\frac{|u-u_{Q_{\rho_2}^\la(z_0)}|^2}{\rho_2^2}\,dx\right)^\frac{1}{2}\,dt\\
            &\le c \left(\fiint_{Q_{\rho_2}^\la(z_0)} \left( \frac{|u-u_{Q_{\rho_2}^\la(z_0)}|^p}{\rho_2^p}+|\na u|^p\right)\,dz\right)^\frac{1}{p}\\
            &\qquad \times S(u,Q_{\rho_2}^\la(z_0)) ^\frac{1}{2}\,,
		\end{split}
	\end{align*}
 where $c=c(n,N,p)$.
   Therefore, it follows
   \begin{align*}
		\begin{split}
			\la^{p-2}S(u,Q_{\rho_1}^\la(z_0))
			&\le \frac{c\rho_2^q}{(\rho_2-\rho_1)^q}\fiint_{Q_{\rho_2}^\la(z_0)}\frac{|u-u_{Q_{\rho_2}^\la(z_0)}|^p}{\rho_2^p}\,dz\\
			&\qquad+\frac{c \rho_2^2}{(\rho_2-\rho_1)^2}\la^{p-2} S(u,Q_{\rho_2}^\la(z_0))^\frac{1}{2}\\
            &\qquad\qquad\times\left(\fiint_{Q_{\rho_2}^\la(z_0)} \Bigg(\frac{|u-u_{Q_{\rho_2}^\la(z_0)}|^p}{\rho_2^p}+|\na u|^p \Bigg)\,dz\right)^\frac{1}{p}\\
            &\qquad+c\fiint_{Q_{\rho_2}^\la(z_0)}H(z,|F|)\,dz\,.
		\end{split}
	\end{align*}
    Now applying \ref{p4} and Lemma~\ref{p_poincare}, we get
	\begin{align*}
		\begin{split}
			&S(u,Q_{\rho_1}^\la(z_0))\le c\frac{\rho_2^q}{(\rho_2-\rho_1)^q}\la^2+c\frac{ \rho_2^2}{(\rho_2-\rho_1)^2}\la\ S(u,Q_{\rho_2}^\la(z_0))^\frac{1}{2}\,.
		\end{split}
	\end{align*}
 Finally, we  apply Young's inequality to the last term
    \begin{align*}
    	\begin{split}
    		&S(u,Q_{\rho_1}^\la(z_0))\le \frac{1}{2}S(u,Q_{\rho_2}^\la(z_0))+ c\frac{\rho_2^{q+2}}{(\rho_2-\rho_1)^{q+2}}\la^2.
    	\end{split}
    \end{align*}
	The proof is concluded by an application of Lemma~\ref{tech_lem}.
\end{proof}

In order to proceed further with the proof of the reverse H\"older inequality, we divide of reasoning into steps starting with estimating the right hand side of the Caccioppoli inequality with the use of the Gagliargo--Nirenberg lemma. 

\begin{lemma}\label{p_reverse_lem_pre}
Suppose \ref{p1} and \ref{p2}.
	There exist constants $c=c(\data)$ and $\theta_0=\theta_0(n,p)\in(0,1)$, such that for any $\theta\in(\theta_0,1)$ we have
	\begin{align*}
		\begin{split}
			&\fiint_{Q_{2\rho}^\la(z_0)}\frac{|u-u_{Q_{2\rho}^\la(z_0)}|^p}{\rho^p}\,dz+\la^{p-2}\fiint_{Q_{2\rho}^\la(z_0)}\frac{|u-u_{Q_{2\rho}^\la(z_0)}|^2}{\rho^2}\,dz\\
			&\le  c\la^{p-1}\left(\fiint_{Q_{2\rho}^\la(z_0)}H(z,|\na u|)^\theta\,dz\right)^\frac{1}{\theta p}+c\la^{p-1}\left(\fiint_{Q_{2\rho}^\la(z_0)}H(z,|F|)\,dz\right)^\frac{1}{ p}\,.
		\end{split}
	\end{align*}
\end{lemma}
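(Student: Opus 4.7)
The plan is to reduce both terms on the left-hand side to a single scalar integral $\fiint_{Q_{2\rho}^\la(z_0)} (|v|/\rho)\,dz$, where $v=u-u_{Q_{2\rho}^\la(z_0)}$, by exploiting the pointwise bound $|v|/\rho\le c\la$ that follows from $u\in L^\infty(\Om_T)$ combined with the decay estimate $\rho\le\cc\la^{-1}$ of Lemma~\ref{lemma_decay}. Then H\"older's inequality, Lemma~\ref{p_poincare} (the $p$-intrinsic Poincar\'e inequality), subadditivity of $t\mapsto t^{1/(\theta p)}$, and Jensen's inequality will conclude the estimate.

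In detail: from $|v|\le 2\|u\|_{L^\infty(\Om_T)}$ and $\rho\le\cc\la^{-1}$ one has $|v|/\rho\le c\la$ with $c=c(\mathit{data})$. The elementary bound $|v|^k\le (c\la\rho)^{k-1}|v|$ for $k\ge 1$, applied with $k=p$ and $k=2$ (the latter multiplied by $\la^{p-2}$), yields
\[
    \tfrac{|v|^p}{\rho^p}+\la^{p-2}\tfrac{|v|^2}{\rho^2}\le c\la^{p-1}\tfrac{|v|}{\rho},
\]
so that the left-hand side is bounded by $c\la^{p-1}\fiint_{Q_{2\rho}^\la(z_0)}|v|/\rho\,dz$. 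H\"older in space--time followed by Lemma~\ref{p_poincare}, valid for $\theta>(q-1)/q$, then gives
\[
    c\la^{p-1}\fiint_{Q_{2\rho}^\la(z_0)}\tfrac{|v|}{\rho}\,dz\le c\la^{p-1}\Big(\fiint_{Q_{2\rho}^\la(z_0)}(H(z,|\na u|)+H(z,|F|))^\theta\,dz\Big)^{1/(\theta p)}.
\]
Subadditivity splits this into $H$ and $H(z,|F|)$ contributions, and Jensen's inequality $\big(\fiint H(z,|F|)^\theta\,dz\big)^{1/(\theta p)}\le\big(\fiint H(z,|F|)\,dz\big)^{1/p}$ converts the forcing part to the desired form. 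The threshold $\theta_0$ can be taken as $p/(p+1)$, which for $q\le p+1$ dominates both $(q-1)/q$ (required by Lemma~\ref{p_poincare}) and $1/p$ (so the exponent $1/(\theta p)$ remains at most one).

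The main obstacle---and the point where the $p$-intrinsic geometry enters decisively---is securing the pointwise bound $|v|/\rho\le c\la$, which relies on both the a priori assumption $u\in L^\infty(\Om_T)$ and the decay inequality of Lemma~\ref{lemma_decay}; the latter in turn depends on the forcing integrability $H(z,|F|)\in L^\gamma$ with $\gamma=(n+p)/p$. An alternative route via Gagliardo--Nirenberg interpolation (Lemma~\ref{sobolev_lem}) combined with the sup bound of Lemma~\ref{p_sup_lem}---choosing $\vartheta=\tfrac12$ and balancing $\la$-powers against the stopping-time bound $\fiint H^\theta\le\la^{p\theta}$ from~\ref{p3}---also works for moderate $(n,p)$, but the GN admissibility condition degenerates as $p$ and $n$ grow; the sup-bound reduction above is the robust choice across the full range of $p\ge 2$.
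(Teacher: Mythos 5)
The central step of your proposal---the pointwise bound $|v|/\rho\le c\la$ with $v=u-u_{Q_{2\rho}^\la(z_0)}$---does not follow from the hypotheses, and the argument collapses at that point. Lemma~\ref{lemma_decay} gives $\rho\le\cc\la^{-1}$, equivalently $\la\rho\le\cc$; this is an \emph{upper} bound on $\la\rho$ (equivalently an upper bound on $\rho$, or on $\la$ in terms of $\rho$), whereas the inequality you need, $|v|\le c\la\rho$, requires a \emph{lower} bound on the product $\la\rho$. Nothing in the stopping-time hypothesis~\ref{p2}, the $p$-intrinsic condition~\ref{p1}, or the $L^\infty$ assumption produces such a lower bound: $\rho$ is an exit-time radius that can be arbitrarily small relative to $\cc/\la$, in which case $|v|/\rho$ can vastly exceed $\la$. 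Indeed, if a pointwise estimate $|v|\le c\la\rho$ on $Q_{2\rho}^\la(z_0)$ were available, Lemma~\ref{p_sup_lem} would be an immediate triviality; the fact that the paper needs a full Caccioppoli iteration (Lemma~\ref{tech_lem}) just to obtain the \emph{averaged} $L^\infty$--$L^2$ bound $\sup_t\fint_B|v|^2/\rho^2\,dx\le c\la^2$ already signals that no such pointwise control exists in this problem.

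The remaining manipulations (H\"older with $\theta p>1$, Lemma~\ref{p_poincare}, subadditivity of $t\mapsto t^{1/(\theta p)}$, Jensen for the $F$-term) are sound \emph{conditionally} on that false premise, so they do not salvage the argument. You also dismiss the Gagliardo--Nirenberg route as unreliable for large $(n,p)$, but this is incorrect: checking the admissibility condition of Lemma~\ref{sobolev_lem} with $\sigma=p$, $s=\theta p$, $r=2$, $\vartheta=\theta$ requires only $\theta>n/(n+2)$, and with $\sigma=2$, $s=\theta p$, $\vartheta=1/2$, $r=2$ it requires $\theta>2n/((n+2)p)$; both thresholds stay bounded away from $1$ uniformly in $p\ge 2$ and in fact the second one \emph{relaxes} as $p$ grows. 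That interpolation argument, combining Lemma~\ref{p_poincare}, Lemma~\ref{p_sup_lem}, and the stopping-time bound~\ref{p4}, is precisely the paper's proof and is the route you should take; the pointwise shortcut is not available here.
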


\begin{proof}
	We begin with the first term on the left hand side of the display. We apply Lemma~\ref{sobolev_lem} with  $\sig=p$, $s=\theta p$, $r=2$ and $\vartheta = \theta\in(n/(n+2),1)$. Then, we get
	\begin{align*}
 \begin{split}
			&\fiint_{Q_{2\rho}^\la(z_0)}\frac{|u-u_{Q_{2\rho}^\la(z_0)}|^p}{\rho^p}\,dz\\
			&\le c\fiint_{Q_{2\rho}^\la(z_0)}\left(\frac{|u-u_{Q_{2\rho}^\la(z_0)}|^{\theta p}}{\rho^{\theta p}}+|\na u|^{\theta p}\right)\,dz\left(S(u,Q_{2\rho}^\la(z_0))\right)^{\frac{(1-\theta)p}{2}}\,,
		\end{split}
	\end{align*}
 where $c=c(n,N,p)$. 
Now, we employ Lemma~\ref{p_poincare} and Lemma~\ref{p_sup_lem}. Then, we get
\begin{align*}
    \begin{split}
        \fiint_{Q_{2\rho}^\la(z_0)}\frac{|u-u_{Q_{2\rho}^\la(z_0)}|^p}{\rho^p}\,dz
         &\le c\la^{(1-\theta )p}\fiint_{Q_{2\rho}^\la(z_0)} (H(z,|\na u|) + H(z,|F|))^\theta\,dz\,.
    \end{split}
\end{align*}
Moreover, using \ref{p4} and H\"older's inequality, we get
\begin{align*}
    \begin{split}
        &\fiint_{Q_{2\rho}^\la(z_0)} (H(z,|\na u|) + H(z,|F|))^\theta\,dz\\
        &=\left( \fiint_{Q_{2\rho}^\la(z_0)} (H(z,|\na u|) + H(z,|F|))^\theta\,dz \right)^{1-\frac{1}{\theta p}} \\
        &\qquad\times \left( \fiint_{Q_{2\rho}^\la(z_0)} (H(z,|\na u|) + H(z,|F|))^\theta\,dz \right)^\frac{1}{\theta p}\\
        &\le \la^{p\theta-1}\left( \fiint_{Q_{2\rho}^\la(z_0)} (H(z,|\na u|) + H(z,|F|))^\theta\,dz \right)^\frac{1}{\theta p}\\
        &\le c\la^{p\theta-1}\left( \fiint_{Q_{2\rho}^\la(z_0)} H(z,|\na u|)^\theta\,dz \right)^\frac{1}{\theta p} +c\la^{p\theta-1}\left( \fiint_{Q_{2\rho}^\la(z_0)} H(z,| F|)\,dz \right)^\frac{1}{ p}\,,
    \end{split}
\end{align*}
for some constant $c=c(data)>0$.
It remains to show the remained term, we again apply Lemma~\ref{p_sup_lem} with $\sigma = 2, s= \theta p, \vartheta = \tfrac{1}{2}$ and $ r=2$ where $\theta\in(2n/((n+2)p),1)$. Then along with Lemma~\ref{p_poincare} and Lemma~\ref{p_sup_lem}
	\begin{align*}
		\begin{split}
			&\fiint_{Q_{2\rho}^\la(z_0)}\frac{|u-u_{Q_{2\rho}^\la(z_0)}|^2}{\rho^2}\,dz\\
			&\le c \fint_{I^\la_{2\rho}(t_0)}\left(\fint_{B_{2\rho}(x_0)}\left(\frac{|u-u_{Q_{2\rho}^\la(z_0)}|^{\theta  p}}{\rho^{\theta  p}}+|\na u|^{\theta  p}\right)\,dx\right)^\frac{1}{\theta p}\left(S(u,Q_{2\rho}^\la(z_0))\right)^\frac{1}{2}\,dt\\
			&\le c\la \left(\fiint_{Q_{2 \rho}^\la(z_0)} ( H(z,|\na u|) +H(z,|F|)
 )^\theta\,dz\right)^\frac{1}{\theta p}\\
 &\le c\la \left(\fiint_{Q_{2 \rho}^\la(z_0)}  H(z,|\na u|)^\theta\,dz\right)^\frac{1}{\theta p}+c\la \left(\fiint_{Q_{2 \rho}^\la(z_0)}  H(z,|F|)\,dz\right)^\frac{1}{ p}\,,
		\end{split}
	\end{align*}
 where $c=c(\data)$. This completes the proof.
 
\end{proof}

We state the reverse H\"older inequality.
\begin{lemma}\label{p_reverse_lem}
Suppose \ref{p1} and \ref{p2}.
	There exist constants $c=c(\data)$ and $\theta_0=\theta_0(n,p)\in(0,1)$ such that for any $\theta\in(\theta_0,1)$ there holds
	\begin{align*}
    \fiint_{Q_{\rho}^\la(z_0)}H(z,|\na u|)\,dz
			\le c\left(\fiint_{Q_{2\rho}^\la(z_0)}H(z,|\na u|)^\theta\,dz\right)^\frac{1}{\theta}+c\fiint_{Q_{2\rho}^\la(z_0)}H(z,|F|)\,dz\,.
	\end{align*}	
\end{lemma}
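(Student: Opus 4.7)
The plan is to combine the Caccioppoli inequality (Lemma~\ref{caccio_lem}) with the Poincar\'e-type estimate of Lemma~\ref{p_reverse_lem_pre}, and then close the argument using the stopping-time identity~\ref{p3} together with Young's inequality.

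I would begin by applying Lemma~\ref{caccio_lem} on the pair $Q_\rho^\la(z_0)\subset Q_{2\rho}^\la(z_0)$ with $\tau^2=\la^{2-p}\rho^2$ and $\ell^2=2\la^{2-p}\rho^2$. Using the identity
\[
a(z)\frac{|u-u_{Q_{2\rho}^\la(z_0)}|^q}{\rho^q}=a(z)\frac{|u-u_{Q_{2\rho}^\la(z_0)}|^{q-p}}{\rho^{q-p}}\cdot\frac{|u-u_{Q_{2\rho}^\la(z_0)}|^p}{\rho^p},
\]
together with $|u-u_{Q_{2\rho}^\la(z_0)}|\le 2\|u\|_{L^\infty(\Om_T)}$ and the phase condition~\ref{p1}, the $q$-phase contribution on the right-hand side of the Caccioppoli estimate is absorbed into the $p$-phase contribution, exactly as in the proof of Lemma~\ref{p_sup_lem}. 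This yields, up to a constant depending only on $K$,
\begin{align*}
\fiint_{Q_\rho^\la(z_0)} H(z,|\na u|)\,dz &\le c\fiint_{Q_{2\rho}^\la(z_0)}\frac{|u-u_{Q_{2\rho}^\la(z_0)}|^p}{\rho^p}\,dz+c\la^{p-2}\fiint_{Q_{2\rho}^\la(z_0)}\frac{|u-u_{Q_{2\rho}^\la(z_0)}|^2}{\rho^2}\,dz\\
&\quad+c\fiint_{Q_{2\rho}^\la(z_0)} H(z,|F|)\,dz,
\end{align*}
to which Lemma~\ref{p_reverse_lem_pre} applies, producing
\begin{align*}
\fiint_{Q_\rho^\la(z_0)} H(z,|\na u|)\,dz &\le c\la^{p-1}\left(\fiint_{Q_{2\rho}^\la(z_0)} H(z,|\na u|)^\theta\,dz\right)^{\!\!1/(\theta p)}\\
&\quad+c\la^{p-1}\left(\fiint_{Q_{2\rho}^\la(z_0)} H(z,|F|)\,dz\right)^{\!\!1/p}+c\fiint_{Q_{2\rho}^\la(z_0)} H(z,|F|)\,dz.
\end{align*}

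To close the argument, I would use the stopping-time identity~\ref{p3}, namely $\la^p=\fiint_{Q_\rho^\la(z_0)}H(z,|\na u|)\,dz+\fiint_{Q_\rho^\la(z_0)} H(z,|F|)\,dz$. Combined with the trivial $\fiint_{Q_\rho^\la(z_0)} H(z,|F|)\le c\fiint_{Q_{2\rho}^\la(z_0)} H(z,|F|)$, the previous display upgrades to an estimate on $\la^p$ itself. Applying Young's inequality with conjugate exponents $p/(p-1)$ and $p$ to each term carrying $\la^{p-1}$ yields, for any $\varepsilon>0$,
\[
c\la^{p-1}\!\left(\fiint_{Q_{2\rho}^\la(z_0)} H(z,|\na u|)^\theta\,dz\right)^{\!\!1/(\theta p)}\!\!\le\varepsilon\la^p+c_\varepsilon\!\left(\fiint_{Q_{2\rho}^\la(z_0)} H(z,|\na u|)^\theta\,dz\right)^{\!\!1/\theta},
\]
and analogously for the $H(z,|F|)$ term. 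Choosing $\varepsilon$ small and absorbing $\la^p$ on the left leads to
\[
\la^p\le c\left(\fiint_{Q_{2\rho}^\la(z_0)} H(z,|\na u|)^\theta\,dz\right)^{\!1/\theta}+c\fiint_{Q_{2\rho}^\la(z_0)} H(z,|F|)\,dz,
\]
and the desired inequality follows from the trivial bound $\fiint_{Q_\rho^\la(z_0)}H(z,|\na u|)\le\la^p$.

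The main obstacle is a purely bookkeeping one: one needs $\theta_0$ close enough to $1$ so that Lemma~\ref{p_reverse_lem_pre} (and the Gagliardo--Nirenberg step buried inside its proof) is applicable, while simultaneously matching the Young exponents so that the power $1/(\theta p)$ on the $L^\theta$-average of $H(z,|\na u|)$ gets correctly promoted to $1/\theta$ in the final estimate. Once this matching is in place, the potentially large factors $K$ and $\cc$ inherited from~\ref{p1} and Lemma~\ref{lemma_decay} are absorbed into the constant $c=c(\data)$.
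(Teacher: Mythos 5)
Your proof is correct and follows essentially the same route as the paper: Caccioppoli plus the phase condition (p-1) to absorb the $q$-part, then Lemma~\ref{p_reverse_lem_pre}, then the stopping-time identity (p-2-i) to express $\la$, and finally Young's inequality with conjugate exponents $p/(p-1)$ and $p$. The only difference is cosmetic: you upgrade the estimate to $\la^p$ before absorbing, whereas the paper substitutes the stopping-time identity directly into the $\la^{p-1}$ factors on the right-hand side and keeps $\fiint_{Q_\rho^\la}H(z,|\na u|)\,dz$ on the left throughout.
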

\begin{proof}
	By Lemma~\ref{caccio_lem}, we have
	\begin{align*}
		\begin{split}
            &\fiint_{Q_{\rho}^\la(z_0)}H(z,|\na u|)\,dz\\
			&\le c\fiint_{Q_{2\rho}^\la(z_0)}\left(\frac{|u-u_{Q_{2\rho}^\la(z_0)}|^p}{\rho^p}+a(z)\frac{|u-u_{Q_{2\rho}^\la(z_0)}|^q}{\rho^q}\right)\,dz\\
			&\qquad+c\la^{p-2}\fiint_{Q_{2\rho}^\la(z_0)}\frac{|u-u_{Q_{2\rho}^\la(z_0)}|^2}{\rho^2}\,dz+ c\fiint_{Q_{2\rho}^\la(z_0)}H(z,|F|)\,dz\,,
		\end{split}
	\end{align*}
	where $c=c(n,p,q,\nu,L)$. We apply \ref{p1} to get
    \begin{align*}
        \begin{split}
            &\fiint_{Q_{\rho}^\la(z_0)}H(z,|\na u|)\,dz\\
			&\le c\fiint_{Q_{2\rho}^\la(z_0)}\frac{|u-u_{Q_{2\rho}^\la(z_0)}|^p}{\rho^p}\,dz+c\la^{p-2}\fiint_{Q_{2\rho}^\la(z_0)}\frac{|u-u_{Q_{2\rho}^\la(z_0)}|^2}{\rho^2}\,dz\\
            &\qquad + c\fiint_{Q_{2\rho}^\la(z_0)}H(z,|F|)\,dz\,,
        \end{split}
    \end{align*}
    where $c=c(\data)$. Then applying Lemma~\ref{p_reverse_lem_pre} and \ref{p3}, it follows that
    \begin{align*}
        \begin{split}
            &\fiint_{Q_{\rho}^\la(z_0)}H(z,|\na u|)\,dz\\
			&\le c\la^{p-1}\left(\fiint_{Q_{2\rho}^\la(z_0)}H(z,|\na u|)^\theta\,dz\right)^\frac{1}{\theta p}+c\la^{p-1}\left(\fiint_{Q_{2\rho}^\la(z_0)}H(z,|F|)\,dz\right)^\frac{1}{p} \\
            &\qquad+ c\fiint_{Q_{2\rho}^\la(z_0)}H(z,|F|)\,dz\\
            &= c\left(\fiint_{Q_{\rho}^\la(z_0)}H(z,|\na u|)+ H(z,|F|) \,dz\right)^\frac{p-1}{p}\left(\fiint_{Q_{2\rho}^\la(z_0)}H(z,|\na u|)^\theta\,dz\right)^\frac{1}{\theta p}\\
            &\qquad+ c\left(\fiint_{Q_{\rho}^\la(z_0)}H(z,|\na u|)+ H(z,|F|) \,dz\right)^\frac{p-1}{p}\left(\fiint_{Q_{2\rho}^\la(z_0)}H(z,|F|)\,dz\right)^\frac{1}{ p}\\
            &\qquad +c\fiint_{Q_{2\rho}^\la(z_0)}H(z,|F|)\,dz\,.
        \end{split}
    \end{align*}
    The conclusion follows by Young's inequality.
\end{proof}

We estimate the right-hand side in the previous lemma further to apply of arguments from Gehring's lemma in the next section. Recall the upper level set $\Psi$ of $H(z,|\na u|)$ in \eqref{upper_level_set}. We denote the upper level set of $H(z,|F|)$ as
\begin{align}\label{upper_F}
    \Theta(\varkappa)=\{z\in \Om_T: H(z,|F(z)|)>\varkappa\}\,.
\end{align}

\begin{proposition}\label{p_est_vitali} 
Suppose \ref{p1} and \ref{p2}.
	There exist constants $c=c(\data)$ and $\theta_0=\theta_0(n,p)\in(0,1)$, such that for any $\theta\in(\theta_0,1)$ we have
	\begin{align*}
			\begin{split}
			    \iint_{Q_{2\cv\rho}^\la(z_0)}H(z,|\na u|)\,dz
			&\le c\La^{1-\theta}\iint_{Q_{2\rho}^\la(z_0)\cap \Psi(c^{-1}\La)}H(z,|\na u|)^\theta\,dz\\
            &\qquad+c\iint_{Q_{2\rho}^\la(z_0)\cap \Theta(c^{-1}\La)}H(z,|F|) \,dz\,.
			\end{split}
	\end{align*}
    Here, $\kappa$ is defined in \eqref{K_and_kappa} and it appears in \ref{p4}.
 \end{proposition}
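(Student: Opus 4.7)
The plan is to combine the reverse Hölder estimate of Lemma~\ref{p_reverse_lem} with the exit-time identity~\ref{p3}, localize the right-hand side to the upper level sets $\Psi(c^{-1}\La)$ and $\Theta(c^{-1}\La)$ through truncation at an absorbable threshold, and finally recast the $\theta$-th root of an integral into the form $\La^{1-\theta}\iint H^{\theta}$ using Jensen's inequality anchored at the sub-extremal bound~\ref{p4}. Throughout I shall exploit the two-sided comparability $\la^{p}\le \La\le c\la^{p}$, where the upper bound follows from $\La=H(z_0,\la)=\la^{p}+a(z_0)\la^{q}$ combined with Lemma~\ref{p_intr}.

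First I would add $\fiint_{Q_\rho^\la(z_0)} H(z,|F|)\,dz$ to both sides of Lemma~\ref{p_reverse_lem} (using the doubling factor $|Q_{2\rho}^\la|/|Q_\rho^\la|=2^{n+2}$) and identify the resulting left-hand side as $\la^{p}$ via~\ref{p3}. This yields
\[
\la^{p}\le c\left(\fiint_{Q_{2\rho}^\la(z_0)} H(z,|\na u|)^{\theta}\,dz\right)^{1/\theta}+c\fiint_{Q_{2\rho}^\la(z_0)} H(z,|F|)\,dz\,.
\]
Next I would split each average on the right according to whether the integrand exceeds $\La_0:=c^{-1}\La$, using $H(z,|\na u|)^{\theta}\le \La_0^{\theta}$ on $Q_{2\rho}^\la\setminus \Psi(\La_0)$ and $H(z,|F|)\le \La_0$ on $Q_{2\rho}^\la\setminus \Theta(\La_0)$. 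Choosing the constant defining $\La_0$ sufficiently large and invoking $\La\le c\la^{p}$, the additive $\La_0$-terms are absorbed into the left-hand side to give
\[
\La\,|Q_{2\rho}^\la(z_0)|\le c\,|Q_{2\rho}^\la(z_0)|^{1-1/\theta}\left(\iint_{Q_{2\rho}^\la(z_0)\cap\Psi(\La_0)} H(z,|\na u|)^{\theta}\,dz\right)^{1/\theta}+c\iint_{Q_{2\rho}^\la(z_0)\cap\Theta(\La_0)} H(z,|F|)\,dz\,.
\]

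For the left-hand side of the proposition, \ref{p4} applied at $s=2\cv\rho$ together with the fixed area ratio $|Q_{2\cv\rho}^\la|/|Q_{2\rho}^\la|=\cv^{n+2}$ and $\la^{p}\le \La$ yields $\iint_{Q_{2\cv\rho}^\la(z_0)} H(z,|\na u|)\,dz\le c\La\,|Q_{2\rho}^\la(z_0)|$, exactly the quantity just controlled. It remains to convert $|Q_{2\rho}^\la|^{1-1/\theta}Y^{1/\theta}$, with $Y:=\iint_{Q_{2\rho}^\la(z_0)\cap\Psi(\La_0)} H(z,|\na u|)^{\theta}\,dz$, into $\La^{1-\theta}Y$. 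Factoring $Y^{1/\theta}=Y\cdot (Y/|Q_{2\rho}^\la|)^{(1-\theta)/\theta}$ and combining Jensen's inequality for the concave map $t\mapsto t^{\theta}$ with~\ref{p4} at $s=2\rho$ gives
\[
\frac{Y}{|Q_{2\rho}^\la(z_0)|}\le \fiint_{Q_{2\rho}^\la(z_0)} H(z,|\na u|)^{\theta}\,dz\le \left(\fiint_{Q_{2\rho}^\la(z_0)} H(z,|\na u|)\,dz\right)^{\theta}\le \la^{p\theta}\le \La^{\theta}\,,
\]
so that $(Y/|Q_{2\rho}^\la|)^{(1-\theta)/\theta}\le \La^{1-\theta}$ and $|Q_{2\rho}^\la|^{1-1/\theta}Y^{1/\theta}\le c\La^{1-\theta}Y$, which is the target form.

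The step I expect to require most care is the calibration of the threshold $\La_0=c^{-1}\La$: the absorbing constant $c$ must simultaneously dominate the multiplicative factor in $\La\le c\la^{p}$ provided by Lemma~\ref{p_intr} and the doubling constants produced by the two splits, while remaining in $c(\data)$ so that the level $c^{-1}\La$ is admissible for the Vitali scheme of Section~\ref{sec:main-proof}. Once this calibration is fixed, Jensen's inequality together with~\ref{p4} delivers exactly the prefactor $\La^{1-\theta}$ demanded by the proposition.
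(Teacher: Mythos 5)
Your proof is correct and takes essentially the same route as the paper's: it combines the reverse Hölder estimate of Lemma~\ref{p_reverse_lem}, the exit-time identities~\ref{p3} and~\ref{p4}, Jensen's inequality to produce the $\La^{1-\theta}$ prefactor, level-set splitting with absorption, the bound $\La\le c\la^p$ afforded by Lemma~\ref{p_intr}, and the fixed measure ratio $|Q_{2\cv\rho}^\la|/|Q_{2\rho}^\la|=\cv^{n+2}$. The only difference is the ordering of two commuting steps --- the paper converts the $\theta$-th root term to the form $\la^{p(1-\theta)}\fiint H^\theta$ before splitting at level $(4c)^{-1/\theta}\la^p$ and only afterwards rescales the level to a multiple of $\La$, whereas you split directly at level $c^{-1}\La$ and extract the $\La^{1-\theta}$ factor at the end --- which does not change the argument.
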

\begin{proof}
	To begin with, we estimate the right hand side of the display in Lemma~\ref{p_reverse_lem}. Since \ref{p1} holds, we have
	\begin{align*}
	    \begin{split}
    \la^p=\fiint_{Q_{\rho}^\la(z_0)} (H(z,|\na u|)+ H(z,|F|)  )\,dz
   &\le c\la^{p(1-\theta)}\fiint_{Q_{2\rho}^\la(z_0)}H(z,|\na u|)^\theta \,dz\\
   &\qquad + c\fiint_{Q_{2\rho}^\la(z_0)}H(z,|F|) \,dz\,,
	    \end{split}
	\end{align*}
    where $c=c(\data)$. For this fixed constant $c$ above, we divide the domain of the integral $H(z,|\na u|)$ into $Q_{2\rho}^\la(z_0)\cap \Psi((4c)^{-1/\theta}\la^p)$ and its complements and similarly, $Q_{2\rho}^\la(z_0)\cap \Theta((4c)^{-1}\la^p)$ and its complement for $H(z,|F|)$. Then, there holds
     \begin{align*}
         \la^p
         &\le \frac{1}{2}\la^p 
         +c\frac{\la^{p(1-\theta)}}{|Q_{2\rho}^\la|}\iint_{Q_{2\rho}^\la(z_0)\cap \Psi((4c)^{-1/\theta}\la^p)}H(z,|\na u|)^{\theta }\,dz\\
         &\qquad +\frac{c}{|Q_{2\rho}^\la|}\iint_{Q_{2\rho}^\la(z_0)\cap \Theta((4c)^{-1}\la^p)} H(z,|F|) \,dz
     \end{align*}
    or, equivalently,
     \begin{align*}
         \begin{split}
             \la^p
         &\le 2c\frac{\la^{p(1-\theta)}}{|Q_{2\rho}^\la|}\iint_{Q_{2\rho}^\la(z_0)\cap \Psi((4c)^{-1/\theta}\la^p)}H(z,|\na u|)^{\theta }\,dz\\
         &\qquad + \frac{2c}{|Q_{2\rho}^\la|}\iint_{Q_{2\rho}^\la(z_0)\cap \Theta((4c)^{-1}\la^p)} H(z,|F|) \,dz
         \end{split}
     \end{align*}
     Now, recalling \eqref{K_and_kappa}, we replace the left hand side by using \ref{p4} in order to deduce
\begin{align*}
         \begin{split}
             \fiint_{Q_{2\cv\rho}^\la(z_0)}H(z,|\na u|)\,dz
         &\le 2c\frac{\la^{p(1-\theta)}}{|Q_{2\rho}^\la|}\iint_{Q_{2\rho}^\la(z_0)\cap \Psi((4c)^{-1/\theta}\la^p)}H(z,|\na u|)^{\theta }\,dz\\
         &\qquad+\frac{2c}{|Q_{2\rho}^\la|}\iint_{Q_{2\rho}^\la(z_0)\cap \Theta((4c)^{-1}\la^p)} H(z,|F|) \,dz\,.
         \end{split}
     \end{align*}
     Thus, we get
     \begin{align*}
         \begin{split}
             \iint_{Q_{2\cv\rho}^\la(z_0)}H(z,|\na u|)\,dz
         &\le 2\kappa^{n+2}c\la^{p(1-\theta)}\iint_{Q_{2\rho}^\la(z_0)\cap \Psi((4c)^{-1/\theta}\la^p)}H(z,|\na u|)^{\theta }\,dz\\
         &\qquad + 2\kappa^{n+2}c\iint_{Q_{2\rho}^\la(z_0)\cap \Theta((4c)^{-1}\la^p)} H(z,|F|) \,dz\,.
         \end{split}
     \end{align*}
    On the other side, we use Lemma~\ref{p_intr} to have
    \[
    \La =\la^p+a(z_0)\la^q\le (C+1)\la^p
    \]
    for some constant $C=C(\data)$. It follows that
\begin{align*}
    \begin{split}
         &    \iint_{Q_{2\cv\rho}^\la(z_0)}H(z,|\na u|)\,dz\\
         &\le 2\kappa^{n+2}c\la^{p(1-\theta)}\iint_{Q_{2\rho}^\la(z_0)\cap \Psi((4c)^{-1/\theta}(C+1)^{-1}\La)}H(z,|\na u|)^{\theta }\,dz\\
         &\qquad + 2\kappa^{n+2}c\iint_{Q_{2\rho}^\la(z_0)\cap \Theta((4c)^{-1}(C+1)^{-1}\La)} H(z,|F|) \,dz\,.
    \end{split}
\end{align*}
     Since $(4c)^{-1}(C+1)^{-1}> (4c)^{-1/\theta}(C+1)^{-1}$ holds,
     the proof is completed by replacing the constant above with $(4\kappa^{n+2}c)^\frac{1}{\theta_0}(C+1)$.
\end{proof}

\subsection{The \texorpdfstring{$(p,q)$}{(p,q)}-intrinsic case}  We concentrate on the proof of the reverse H\"older inequality in this regime provided in Lemma~\ref{q_reverse_lem} and its consequence ready to apply in the main proof, namely Proposition~\ref{q_est_vitali}. With the assumption \ref{q1}, we rewrite Lemma~\ref{poincare_lem} in the $(p,q)$-intrinsic geometry as follows.
\begin{lemma}\label{q_poincare_lem_pre}
Suppose \ref{q1} and \ref{q2}.
	For every $s\in[2\rho,4\rho]$, $m\in(1,q]$ and $\theta\in(1/m,1]$, there exists a constant $c=c(n,N,m,L,\ca)$, such that
	\begin{align*}
    		\begin{split}
			&\fiint_{G_s^\la(z_0)}\frac{|u-u_{G_s^\la(z_0)}|^{\theta m}}{s^{\theta m}}\,dz\\
            &\le c\fiint_{G_s^\la(z_0)}|\na u|^{\theta m}\,dz+c\left(\frac{\la^2}{\La}\fiint_{G_s^\la(z_0)}(|\na u|^{p-1}+a(z_0)|\na u|^{q-1})
   \,dz\right)^{\theta m}\\
   &\qquad+ c\left(\frac{\la^2}{\La}\fiint_{G_s^\la(z_0)}(|F|^{p-1}+a(z_0)|F|^{q-1})
   \,dz\right)^{\theta m}\,.
		\end{split}
	\end{align*}
\end{lemma}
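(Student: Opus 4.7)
The approach is to derive this directly from the general parabolic Poincaré-type inequality of Lemma~\ref{poincare_lem} by specializing it to the $(p,q)$-intrinsic cylinder $G_s^\la(z_0)$ and then invoking the Hölder comparability of $a$ supplied by assumption \ref{q1} to replace the pointwise weight $a(z)$ by the frozen value $a(z_0)$.

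First I would identify $G_s^\la(z_0)=B_s(x_0)\times J_s^\la(t_0)$ as a cylinder of the form $Q_{R,\ell}(z_0)$ used in Lemma~\ref{poincare_lem}, via the choices $R=s$ and $\ell=\la(H(z_0,\la))^{-1}s=\la\La^{-1}s$, so that $I_\ell(t_0)=J_s^\la(t_0)$. Under this identification the prefactor $\ell^2/R^2$ that governs the gluing contribution on the right-hand side of Lemma~\ref{poincare_lem} reduces precisely to the intrinsic scaling quantity displayed in the present claim. Applying Lemma~\ref{poincare_lem} with the chosen $m\in(1,q]$ and $\theta\in(1/m,1]$ then yields the desired estimate almost verbatim, except that the weight inside the gluing-type term is the pointwise value $a(z)$ rather than $a(z_0)$.

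To finish, observe that $s\in[2\rho,4\rho]$ entails $G_s^\la(z_0)\subset G_{4\rho}^\la(z_0)$, so assumption \ref{q1} supplies the pointwise bound $a(z)\le 2\ca a(z_0)$ throughout the cylinder, which allows me to replace $a(z)$ by $a(z_0)$ inside the bracket at the cost of a multiplicative constant depending only on $\ca$. Finally, splitting the joint $(\na u, F)$-integral via the elementary inequality $(A+B)^{\theta m}\le c(A^{\theta m}+B^{\theta m})$ separates the two contributions into the two terms appearing in the conclusion. I do not anticipate a genuine obstacle: the lemma is essentially the verbatim translation of the general parabolic Poincaré inequality into the $(p,q)$-intrinsic geometry, with the only nontrivial ingredient being the freezing of the weight, which is handled cleanly by the comparability~\ref{q1}.
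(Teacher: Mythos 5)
Your route is precisely the one the paper implicitly takes: specialize Lemma~\ref{poincare_lem} to $Q_{R,\ell}=G_s^\la(z_0)$ with $R=s$, freeze $a(z)\mapsto a(z_0)$ via \ref{q1} (legitimate since $G_s^\la(z_0)\subset G_{4\rho}^\la(z_0)$ for $s\le 4\rho$), and then split the bracket with $(A+B)^{\theta m}\le 2^{q-1}(A^{\theta m}+B^{\theta m})$ --- this is exactly what the paper means by ``rewriting'' the Poincar\'e inequality in the $(p,q)$-intrinsic geometry. One point deserves care, however. If you take $\ell=\la(H(z_0,\la))^{-1}s=\la\La^{-1}s$ literally, as you write, then $\ell^2/R^2=\la^2/\La^2$, which is \emph{not} the factor $\la^2/\La$ in the target: the two disagree by a power of $\La$. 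The culprit is a typo in the paper's displayed definition of $J_\rho^\la(t_0)$: the exponent on $H(z_0,\la)$ should be $-\tfrac12$, in parallel with the $\la^{\frac{2-p}{2}}$ appearing in $I_\rho^\la$, so that $J_\rho^\la$ has half-length $\tfrac{\la^2}{\La}\rho^2$. That this is the intended scaling is confirmed by the consistent usage elsewhere, e.g.\ the $\tfrac{\La}{\la^2}$ prefactors throughout Lemma~\ref{q_sup_lem} and the identity $|J_{l_w}^{\la_w}(t)|=2\tfrac{\la_w^2}{\La}l_w^2$ invoked in the proof of Lemma~\ref{vitali_lem}. Once you use $\ell=\la\La^{-1/2}s$, you get $\ell^2/R^2=\la^2/\La$ and your derivation closes exactly as claimed; as written, however, your assertion that the prefactor ``reduces precisely'' to $\la^2/\La$ does not follow from the $\ell$ you chose.
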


We begin to estimate each term on the right hand side of Caccioppoli type inequality.
\begin{lemma}\label{q_poincare_1}
Suppose \ref{q1} and \ref{q2}.
    For $s\in[2\rho,4\rho]$ and $\theta\in((q-1)/q,1]$, 
	there exists a constant $c=c(n,N,p,L,\ca)$, such that 
	\begin{align*}
		\fiint_{G_{s}^\la(z_0)} \frac{|u-u_{G_{s}^\la(z_0)}|^{\theta p}}{s^{\theta p}}\,dz
			\le c\fiint_{{G^\la_s(z_0)}} (H(z_0,|\na u|+ H(z_0,|F|)  )^\theta\,dz\,.
	\end{align*}
\end{lemma}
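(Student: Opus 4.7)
The strategy is to specialize Lemma~\ref{q_poincare_lem_pre} to $m=p$ and dominate each resulting term on the right-hand side by $\fiint_{G_s^\la(z_0)}(H(z_0,|\na u|)+H(z_0,|F|))^\theta\,dz$. The main ingredients are: (i) the trivial pointwise bound $H(z_0,s)\ge s^p$; (ii) the two embedded inequalities $\La\ge\la^p$ and $\La\ge a(z_0)\la^q$ coming from $\La=H(z_0,\la)$; (iii) the stopping-time bound \ref{q5}; and (iv) the coefficient comparability \ref{q1}, which yields $H(z_0,s)\le c(\ca)H(z,s)$ for every $z\in G_{4\rho}^\la(z_0)$ and hence
\[
\fiint_{G_s^\la(z_0)}H(z_0,|\na u|)\,dz+\fiint_{G_s^\la(z_0)}H(z_0,|F|)\,dz\le c\La
\]
for every $s\in[2\rho,4\rho]$.

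The pure gradient term in Lemma~\ref{q_poincare_lem_pre} is immediate from (i). For the mixed term
\[
\Bigl(\frac{\la^2}{\La}\fiint_{G_s^\la(z_0)}(|\na u|^{p-1}+a(z_0)|\na u|^{q-1})\,dz\Bigr)^{\theta p},
\]
I would split the integrand and handle each piece by H\"older's inequality. For $|\na u|^{p-1}$ I use the exponent $\theta p/(p-1)>1$, admissible since $\theta>(q-1)/q>(p-1)/p$, to get $(\fiint|\na u|^{p-1}\,dz)^{\theta p}\le(\fiint|\na u|^{\theta p}\,dz)^{p-1}$, then peel off one copy of $\fiint H(z_0,|\na u|)^\theta\,dz$ and bound the remaining $(p-2)$-power by Jensen's inequality together with the displayed energy bound, producing the prefactor $c\la^{2\theta p}\La^{-2\theta}\le c$ thanks to $\La\ge\la^p$. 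For the piece $a(z_0)|\na u|^{q-1}$, the sharp threshold $\theta>(q-1)/q$ is crucial: I write $a(z_0)|\na u|^{q-1}=a(z_0)^{1/q}\bigl(a(z_0)|\na u|^q\bigr)^{(q-1)/q}$ and apply H\"older with exponent $\theta q/(q-1)>1$. After isolating a factor of $\fiint H(z_0,|\na u|)^\theta\,dz$, the remaining prefactor is $c\la^{2\theta p}\La^{-\theta(p+q)/q}a(z_0)^{\theta p/q}$; substituting $a(z_0)^{\theta p/q}\le\La^{\theta p/q}\la^{-\theta p}$ (from $\La\ge a(z_0)\la^q$) collapses this to $c\la^{\theta p}\La^{-\theta}\le c$ via $\La\ge\la^p$.

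The analogous bound for the term involving $F$ follows by an identical argument, since the stopping-time bound \ref{q5} simultaneously controls $\fiint H(z,|F|)\,dz\le\La$. Summing the three contributions and using $(\alpha+\beta)^\theta\le\alpha^\theta+\beta^\theta$ for $\theta\in(0,1]$ yields the stated inequality. The main obstacle is precisely the exponent bookkeeping in the $q$-piece: the three scales $\la^2/\La$, $a(z_0)$, and the energy bound $\La$ must cancel exactly, which forces both the intrinsic identity $\La=H(z_0,\la)$ and the threshold $\theta>(q-1)/q$ to be used simultaneously. Any weaker assumption on $\theta$ would leave a positive residual power of $\la^p/\La$ or $a(z_0)\la^q/\La$ that cannot be absorbed into a dimensionless constant depending only on $(n,N,p,L,\ca)$.
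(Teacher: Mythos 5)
Your proof is correct and follows essentially the same route as the paper: specialize Lemma~\ref{q_poincare_lem_pre} to $m=p$, bound the pure gradient term via $|\nabla u|^{\theta p}\le H(z_0,|\nabla u|)^\theta$, and for the mixed terms apply H\"older with exponents $\theta p/(p-1)$ and $\theta q/(q-1)$ together with the stopping-time bound \ref{q5}, the comparability \ref{q1}, and the two scale relations $\la^p\le\La$ and $a(z_0)\la^q\le\La$. The only difference is cosmetic: you peel off the averaged factor first and cancel the scales at the end, whereas the paper pre-bounds $a(z_0)^{1/q}\la\le\La^{1/q}$ and $\la\le\La^{1/p}$ before peeling; the exponent arithmetic collapses to the same identity.
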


\begin{proof}
    We employ Lemma~\ref{q_poincare_lem_pre} to have
    \begin{align*}
        \begin{split}
            &\fiint_{G_{s}^\la(z_0)} \frac{|u-u_{G_{s}^\la(z_0)}|^{\theta p}}{s^{\theta p}}\,dz\\
            &\le c\fiint_{G_s^\la(z_0)}|\na u|^{\theta p}\,dz+c\left(\frac{\la^2}{\La}\fiint_{G_s^\la(z_0)}(|\na u|^{p-1}+a(z_0)|\na u|^{q-1})
   \,dz\right)^{\theta p}\\
   &\qquad+c\left(\frac{\la^2}{\La}\fiint_{G_s^\la(z_0)}(|F|^{p-1}+a(z_0)|F|^{q-1})
   \,dz\right)^{\theta p}\,.
        \end{split}
    \end{align*}
    As $|\na u|^{\theta p}\le H(z_0,|\na u|)^{\theta}$ holds, it remains to estimate the last two terms. Since $p-1<\theta p$ and $p\ge 2$ hold, we apply H\"older's inequality and \ref{q5} to get
    \begin{align*}
        \begin{split}
            \left(\frac{\la^2}{\La}\fiint_{G_s^\la(z_0)}|\na u|^{p-1}\,dz\right)^{\theta p}
            &\le \left(\frac{\la^2}{\La}\right)^{\theta p} \left( \fiint_{G_s^\la(z_0)}|\na u|^{\theta p}\,dz\right)^{p-1}\\
            &\le \left(\frac{\la^2}{\La}\right)^{\theta p}\La^{\theta(p-2)} \fiint_{G_s^\la(z_0)} |\na u|^{\theta p}\,dz\\
            &\le \fiint_{G_s^\la(z_0)} |\na u|^{\theta p}\,dz\,,
        \end{split}
    \end{align*}
    where to obtain the last inequality, we used $\la \le\La^\frac{1}{p}$ so that
    \[
    \left(\frac{\la^2}{\La}\right)^{\theta p}\La^{\theta(p-2)}\le \La^{(\frac{2}{p}-1)\theta p}\La^{\theta(p-2)}=1\,.
    \]
    
    Similarly, for the remaining term, we again apply H\"older's inequality and \ref{q5}, and use the facts that $q-1<\theta q$ and $a(z_0)^\frac{1}{q}\la<\La^\frac{1}{q}$. Then, we get
    \begin{align*}
        \begin{split}
            \left(\frac{\la^2}{\La}\fiint_{G_s^\la(z_0)}a(z_0)|\na u|^{q-1}\,dz\right)^{\theta p}
            &= \left(\frac{(a(z_0)^\frac{1}{q}\la)\la}{\La}\fiint_{G_s^\la(z_0)}a(z_0)^\frac{q-1}{q}|\na u|^{q-1}\,dz\right)^{\theta p}\\
            &\le \left(\frac{\La^{\frac{1}{q}+\frac{1}{p}}}{\La}\right)^{\theta p}\left(\fiint_{G_s^\la(z_0)}a(z_0)^{\theta}|\na u|^{\theta q}\,dz\right)^\frac{(q-1)p}{q}\\
            &\le \left(\frac{\La^{\frac{1}{q}+\frac{1}{p}}}{\La}\right)^{\theta p}\La^{\frac{\theta (q-1)p}{q}-\theta}\fiint_{G_s^\la(z_0)}a(z_0)^{\theta}|\na u|^{\theta q}\,dz\\
            &\le \fiint_{G_s^\la(z_0)}H(z_0,|\na u|)^\theta\,dz\,,
        \end{split}
    \end{align*}
    where we used facts that $2\le p<q$ and $\tfrac{(q-1)p}{q}>1$. The same argument holds by replacing $|\na u|$ with $|F|$.
Combining these estimate, the proof is completed.
\end{proof}

Despite the previous lemma plays an important role as in the $p$-intrinsic case, the right hand side in Lemma~\ref{q_poincare_1} would be estimated by $\La^\theta$ rather than $\la^{\theta p}$, which is not applicable to obtain quantitative estimate of $L^\infty-L^2$. For this reason, we use the following estimate.
\begin{lemma}\label{q_trick}
Suppose \ref{q1} and \ref{q2}.
    For $s\in[2\rho,4\rho]$, there holds
    \[
    \fiint_{G^\la_s(z_0)} (|\na u|^p + |F|^p) \,dz\le (1+2^{1+\frac{q}{p}}\ca)^\frac{p}{q}\la^p\,.
    \]
\end{lemma}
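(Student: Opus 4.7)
The plan is to obtain a closed nonlinear inequality for $X := \fiint_{G^\la_s(z_0)}(|\na u|^p + |F|^p)\,dz$ by coupling \ref{q5} with Jensen's inequality, and then to verify the announced bound $X \le (1+2^{1+q/p}\ca)^{p/q}\la^p$ by a simple monotonicity argument on that inequality.

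Concretely, I would first expand $H(z,\cdot) = (\cdot)^p + a(z)(\cdot)^q$ in \ref{q5} and use the lower bound $a(z) > a(z_0)/(2\ca)$ on $G^\la_{4\rho}(z_0)$ from \ref{q1} to obtain
\[
X + \frac{a(z_0)}{2\ca}\,Y \;\le\; \fiint_{G^\la_s(z_0)}\bigl(H(z,|\na u|)+H(z,|F|)\bigr)\,dz \;\le\; \La \;=\; \la^p + a(z_0)\la^q,
\]
where $Y := \fiint_{G^\la_s(z_0)}(|\na u|^q + |F|^q)\,dz$. Next, since $q \ge p$ the function $x\mapsto x^{q/p}$ is convex, so applying Jensen's inequality separately to $|\na u|$ and $|F|$ and combining with the power-mean inequality $(A+B)^{q/p}\le 2^{q/p-1}(A^{q/p}+B^{q/p})$ produces
\[
X^{q/p} \;=\; \Bigl(\fiint|\na u|^p + \fiint|F|^p\Bigr)^{q/p} \;\le\; 2^{q/p-1}\Bigl(\fiint|\na u|^q + \fiint|F|^q\Bigr) \;=\; 2^{q/p-1}\,Y,
\]
and substituting the equivalent lower bound $Y \ge 2^{1-q/p}X^{q/p}$ into the previous display yields the self-contained inequality
\[
X + \frac{a(z_0)\,2^{1-q/p}}{2\ca}\,X^{q/p} \;\le\; \la^p + a(z_0)\la^q.
\]

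Because the left-hand side is strictly increasing in $X \ge 0$, it suffices to check that the substitution $X = C\la^p$ with $C := (1+2^{1+q/p}\ca)^{p/q}$ makes it at least equal to $\la^p + a(z_0)\la^q$. Matching the coefficients of $\la^p$ and of $a(z_0)\la^q$ separately reduces the task to the two scalar inequalities $C\ge 1$ and $2^{1-q/p}C^{q/p}/(2\ca)\ge 1$, i.e.\ $C^{q/p}\ge 2^{q/p}\ca$. Both hold at once from the identity $C^{q/p} = 1 + 2\cdot 2^{q/p}\ca$, so the conclusion follows.

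The main conceptual point—and the step I expect to be the actual obstacle—is that \ref{q5} by itself only provides the coupled estimate $X + \mathrm{const}\cdot Y\le \La$ and therefore cannot constrain $X$ directly; the exponent gap $p < q$ must be exploited, through Jensen's inequality, to translate this coupling into a closed nonlinear inequality in $X$ alone. The somewhat technical constant $(1+2^{1+q/p}\ca)^{p/q}$ in the statement is exactly the sharp solution of the resulting scalar comparison, which is why the coefficient matching above succeeds cleanly.
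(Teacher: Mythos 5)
Your proof is correct and follows essentially the same route as the paper: you derive the closed scalar inequality $X + c\,a(z_0)X^{q/p}\le\La$ from~\ref{q1}, \ref{q5} and Jensen/Hölder, then conclude by monotonicity; the paper reads this same chain top-to-bottom in a single display and finishes by contradiction, with the only cosmetic difference being that your intermediate constant $2^{-q/p}a(z_0)/c_a$ is slightly sharper than the paper's $2^{-1-q/p}a(z_0)/c_a$, which is harmless since both comfortably yield $C^{q/p}=1+2^{1+q/p}c_a$.
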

\begin{proof}
    Note that it follows from H\"older's inequality, \ref{q1} and \ref{q5} that
    \begin{align*}
        &\fiint_{G^\la_s(z_0)} (|\na u|^p+|F|^p  )\,dz+\frac{a(z_0)}{2^{1+\frac{q}{p}}\ca}\left(\fiint_{G^\la_s(z_0)} ( |\na u|^p+|F|^p ) \,dz\right)^\frac{q}{p}\\
        &\le\fiint_{G^\la_s(z_0)} (|\na u|^p+|F|^p  )\,dz+\frac{a(z_0)}{2\ca}\fiint_{G^\la_s(z_0)} (|\na u|^q+|F|^q )\,dz\\
        &\le \fiint_{G^\la_s(z_0)} (|\na u|^p+|F|^p  )\,dz+\fiint_{G^\la_s(z_0)} a(z) (|\na u|^q+|F|^q )\,dz\\
        &<\La=\la^p+a(z_0)\la^q\,.
    \end{align*}
    Therefore, it cannot be true that
    \[
    \fiint_{G^\la_s(z_0)} (|\na u|^p+|F|^p  )\,dz> (1+2^{1+\frac{q}{p}}\ca)^\frac{p}{q}\la^p\,.
    \]
    Hence, the conclusion holds.
\end{proof}

\begin{lemma}\label{q_poincare_11}
Suppose \ref{q1} and \ref{q2}.
    For $s\in[2\rho,4\rho]$, 
	there exists a constant $c=c(n,N,p,L,\ca)$, such that 
	\begin{align*}
		\fiint_{G_{s}^\la(z_0)} \frac{|u-u_{G_{s}^\la(z_0)}|^{ p}}{s^{p}}\,dz
			\le c\la^p\,.
	\end{align*}
\end{lemma}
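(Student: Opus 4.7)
The plan is to bypass Lemma~\ref{q_poincare_1} (whose $\theta=1$ version would only yield the weaker bound $c\La$) and instead apply the parabolic Poincar\'e inequality (Lemma~\ref{poincare_lem}) directly to $G_s^\la(z_0) = Q_{s,(\la/\La)s}(z_0)$ with $m=p$ and $\theta=1$ (admissible since $p\ge 2$). This produces
\[
\fiint_{G_s^\la(z_0)} \frac{|u - u_{G_s^\la(z_0)}|^p}{s^p}\,dz \le c \fiint_{G_s^\la(z_0)} |\na u|^p\,dz + c \left(\frac{\la^2}{\La^2} \fiint_{G_s^\la(z_0)} \Phi\,dz\right)^{p}\,,
\]
where $\Phi := |\na u|^{p-1} + a(z)|\na u|^{q-1} + |F|^{p-1} + a(z)|F|^{q-1}$. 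The crucial structural point is that the first term on the right contains only the pure $p$-phase gradient (no $a|\na u|^q$ contribution), so Lemma~\ref{q_trick} directly yields $\fiint |\na u|^p\,dz \le c\la^p$.

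For the bracket, I treat each of the four pieces of $\Phi$ separately. The $p$-phase pieces $\fiint |\na u|^{p-1}\,dz$ and $\fiint |F|^{p-1}\,dz$ are bounded by $c\la^{p-1}$ via H\"older's inequality and Lemma~\ref{q_trick}. For the $q$-phase contributions, I split $a = a^{1/q}\cdot a^{(q-1)/q}$ and apply H\"older with exponents $q$ and $q/(q-1)$, then invoke \ref{q1} (which gives $\fiint a\,dz \le 2\ca\,a(z_0)$) and \ref{q5} (which gives $\fiint a|\na u|^q\,dz \le \La$) to obtain
\[
\fiint_{G_s^\la(z_0)} a|\na u|^{q-1}\,dz \le c\,a(z_0)^{1/q}\La^{(q-1)/q} \le c\,\La/\la\,,
\]
where the last step uses $a(z_0)\le \La/\la^q$ coming from $\La = \la^p + a(z_0)\la^q$. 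The $|F|^{q-1}$ term is handled identically.

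Collecting all four contributions, $(\la^2/\La^2)\fiint \Phi\,dz \le c(\la^{p+1}/\La^2 + \la/\La)$; since $\La \ge \la^p$ (trivially from $a\ge 0$) and $\la \ge 1$, each summand is bounded by $1$, and hence its $p$-th power is bounded by a constant, which is in turn dominated by $c\la^p$. Adding this to the first-term bound closes the proof. The main obstacle is the careful tracking of the $(p,q)$-intrinsic scaling: the time-to-space ratio $\ell^2/R^2 = \la^2/\La^2$ enters inside the Poincar\'e parenthesis and is then raised to the $p$-th power, and its interaction with the $q$-phase bound $\La^{(q-1)/q}$ must cleanly collapse to a factor controlled by $\la^p$ rather than the larger $\La$, which the elementary inequality $\La\ge\la^p$ secures.
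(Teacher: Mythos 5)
Your proof is correct, but it takes a genuinely different route from the paper's. The paper first passes through Lemma~\ref{q_poincare_lem_pre} (i.e.\ it invokes \ref{q1} to replace $a(z)$ by $a(z_0)$ inside the Poincar\'e inequality), then bounds $\fiint_{G_s^\la(z_0)}|\na u|^{q-1}\,dz \le c\la^{q-1}$ by H\"older together with the fact $q-1\le p$ and Lemma~\ref{q_trick}; the bracket then collapses algebraically as $\tfrac{\la^2}{\La}\bigl(\la^{p-1}+a(z_0)\la^{q-1}\bigr)=\la$. You instead keep $a(z)$ inside, split $a=a^{1/q}a^{(q-1)/q}$ and apply H\"older with exponents $q$ and $q/(q-1)$, so that \ref{q1} controls $\fiint a\,dz$ and the stopping condition \ref{q5} controls $\fiint a|\na u|^q\,dz\le\La$; you then close with the elementary inequalities $a(z_0)\le\La/\la^q$, $\La\ge\la^p$, $\la\ge 1$. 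The upshot is that your argument bounds the $q$-phase contribution by the full $q$-phase energy rather than by down-converting it to the $p$-phase through $q-1\le p$ — so your route bypasses the explicit use of $q\le p+1$ that the paper's relies on at this step (though of course the overall theorem still needs $q\le p+\alpha$). Both are clean; the paper's is a touch shorter because the algebraic cancellation $\tfrac{\la^2}{\La}(\la^{p-1}+a(z_0)\la^{q-1})=\la$ avoids the case-by-case bookkeeping of the four pieces of $\Phi$. One minor remark: the scaling factor in Lemma~\ref{q_poincare_lem_pre} as printed reads $\la^2/\La$, whereas a direct computation of $\ell^2/R^2$ for $G_s^\la$ gives $\la^2/\La^2$ as you use; since $\La\ge 1$ the paper's larger factor still suffices there, so this does not affect either proof.
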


\begin{proof}
     We again employ Lemma~\ref{q_poincare_lem_pre} to have
    \begin{align*}
        \begin{split}
            &\fiint_{G_{s}^\la(z_0)} \frac{|u-u_{G_{s}^\la(z_0)}|^{ p}}{s^{ p}}\,dz\\
            &\le c\fiint_{G_s^\la(z_0)}|\na u|^{p}\,dz+c\left(\frac{\la^2}{\La}\fiint_{G_s^\la(z_0)}(|\na u|^{p-1}+a(z_0)|\na u|^{q-1})
   \,dz\right)^{ p}\\
   &\qquad + c\left(\frac{\la^2}{\La}\fiint_{G_s^\la(z_0)}(|F|^{p-1}+a(z_0)|F|^{q-1})
   \,dz\right)^{ p}\,.
        \end{split}
    \end{align*}
    Recalling $\La=\la^p+a(z_0)\la^q$ and $q-1\le p$, we use H\"older's inequality and Lemma~\ref{q_trick} to have
    \begin{align*}
        \begin{split}
            \fiint_{G_{s}^\la(z_0)} \frac{|u-u_{G_{s}^\la(z_0)}|^{ p}}{s^{ p}}\,dz
            \le c\la^p+c\left(\frac{\la^2}{\La} (\la^{p-1}+a(z_0)\la^{q-1})\right)^{ p}\le c\la^p\,.
        \end{split}
    \end{align*}
    The proof is completed.
\end{proof}

We are now going back to the $q$-Laplacian part version of Lemma~\ref{q_poincare_1}.

\begin{lemma}\label{q_poincare_2}
Suppose \ref{q1} and \ref{q2}.
    For $s\in[2\rho,4\rho]$ and $\theta\in((q-1)/q,1]$, 
	there exists a constant $c=c(n,N,q,L,\ca)$, such that 
	\begin{align*}
		a(z_0)^\theta\fiint_{G_{s}^\la(z_0)} \frac{|u-u_{G_{s}^\la(z_0)}|^{\theta q}}{s^{\theta q}}\,dz
			\le c\fiint_{G^\la_s(z_0)} (H(z_0,|\na u|) +H(z_0,|F|) )^\theta\,dz\,.
	\end{align*}
\end{lemma}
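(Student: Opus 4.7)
The approach is to mimic the proof of Lemma~\ref{q_poincare_1} directly, but invoke Lemma~\ref{q_poincare_lem_pre} with the exponent $m=q$ (rather than $m=p$), and then multiply the resulting inequality through by $a(z_0)^\theta$. The hypothesis $\theta\in((q-1)/q,1]$ guarantees $\theta>1/q$, so Lemma~\ref{q_poincare_lem_pre} is applicable and gives
\[
\fiint_{G_s^\la(z_0)}\frac{|u-u_{G_s^\la(z_0)}|^{\theta q}}{s^{\theta q}}\,dz\le c\fiint_{G_s^\la(z_0)}|\na u|^{\theta q}\,dz+c\!\left(\frac{\la^2}{\La}\fiint_{G_s^\la(z_0)}(|\na u|^{p-1}+a(z_0)|\na u|^{q-1})\,dz\right)^{\!\theta q}+\text{($F$-term)}.
\]
After multiplication by $a(z_0)^\theta$, the leading term is handled immediately by the pointwise bound $a(z_0)^\theta|\na u|^{\theta q}=(a(z_0)|\na u|^q)^\theta\le H(z_0,|\na u|)^\theta$, which gives $a(z_0)^\theta\fiint|\na u|^{\theta q}\,dz\le\fiint H(z_0,|\na u|)^\theta\,dz$.

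For the correction piece containing $a(z_0)|\na u|^{q-1}$ I would repeat verbatim the argument used for the $q$-Laplacian-type term in Lemma~\ref{q_poincare_1}: absorb $a(z_0)^\theta=(a(z_0)^{1/q})^{\theta q}$ inside the $\theta q$-power, split $a(z_0)=a(z_0)^{(q-1)/q}\cdot a(z_0)^{1/q}$, invoke the crucial intrinsic estimate $a(z_0)^{1/q}\la\le\La^{1/q}$ (which follows from $\La\ge a(z_0)\la^q$), then apply Jensen's inequality with exponent $\theta q/(q-1)\ge1$ to transform $(\fiint a(z_0)^{(q-1)/q}|\na u|^{q-1})^{\theta q}$ into $(\fiint a(z_0)^\theta|\na u|^{\theta q})^{q-1}$. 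A factor of $\La^{\theta(q-2)}$ is peeled off from the $(q-2)$-power of this last integral using \ref{q5} together with \ref{q1} (which ensures $\fiint H(z_0,\cdot)\le c\fiint H(z,\cdot)\le c\La$), and all scaling exponents cancel against $\La^{\theta(2-q)}$ from the prefactor.

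For the correction piece with $|\na u|^{p-1}$, I would imitate the $p$-Laplacian-type manipulation from Lemma~\ref{q_poincare_1}: Jensen with exponent $\theta q/(p-1)\ge1$ converts $(\fiint|\na u|^{p-1})^{\theta q}$ into $(\fiint|\na u|^{\theta q})^{p-1}$, which I split as $(\fiint|\na u|^{\theta q})^{p-2}\cdot\fiint|\na u|^{\theta q}$. The first factor is estimated using Jensen together with \ref{q5}–\ref{q1} (giving $\fiint|\na u|^{\theta q}\le(c\La/a(z_0))^\theta$), while the remaining factor is converted via $\fiint|\na u|^{\theta q}=a(z_0)^{-\theta}\fiint a(z_0)^\theta|\na u|^{\theta q}\le a(z_0)^{-\theta}\fiint H(z_0,|\na u|)^\theta$. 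Collecting the exponents produces scaling factors of the form $a(z_0)^{\theta(2-p)}\la^{2\theta q}\La^{\theta(p-2-q)}$, which must cancel using both $\La\ge\la^p$ and $\La\ge a(z_0)\la^q$ in tandem. The $|F|$ terms proceed identically, replacing $|\na u|$ by $|F|$ throughout and using that \ref{q5} also bounds $\fiint H(z,|F|)\le\La$.

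The main obstacle, as in the analogous argument for the $p$-Laplacian-type correction in Lemma~\ref{q_poincare_1}, is the careful bookkeeping of the three scaling parameters $a(z_0)$, $\la$, and $\La$; the exact cancellation depends on the two-sided intrinsic relation $\max(\la^p,a(z_0)\la^q)\le\La=\la^p+a(z_0)\la^q\le 2\max(\la^p,a(z_0)\la^q)$ and the fact that the range $p<q\le p+\alpha$ keeps all intermediate exponents in a workable regime.
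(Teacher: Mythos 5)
Your high-level plan is the right one: apply Lemma~\ref{q_poincare_lem_pre} with $m=q$, multiply by $a(z_0)^\theta$, use $a(z_0)^\theta|\na u|^{\theta q}\le H(z_0,|\na u|)^\theta$ for the leading term, and push the correction terms through Jensen. Your treatment of the $q$-piece also works: after absorbing $a(z_0)^\theta=(a(z_0)^{1/q})^{\theta q}$ inside the power, the intrinsic bound $a(z_0)^{1/q}\la\le\La^{1/q}$ and Jensen produce $\La^{\theta(2-q)}\bigl(\fiint a(z_0)^\theta|\na u|^{\theta q}\bigr)^{q-1}$, and peeling off $\La^{\theta(q-2)}$ via~\ref{q5}, \ref{q1} closes that piece; this is essentially what the paper does for the corresponding term in Lemma~\ref{q_poincare_1}.

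The gap is in the $|\na u|^{p-1}$ correction. Following your steps, the scaling prefactor you arrive at is $a(z_0)^{-\theta(p-2)}\la^{2\theta q}\La^{\theta(p-2-q)}$, and you assert it ``must cancel using both $\La\ge\la^p$ and $\La\ge a(z_0)\la^q$ in tandem.'' That claim is false when $p>2$. Whichever way you split $\La^{\theta(p-2-q)}$ into pieces and invoke the two lower bounds, you always end up with a factor of the form $(a(z_0)\la^{q-p})^{-\theta k}$ for some $k>0$ (for instance $k=p-2$ or $k=p-1$ or $k=q-2$ depending on the split); there is no lower bound available on $a(z_0)\la^{q-p}$ in the $(p,q)$-intrinsic case, so this factor is genuinely unbounded. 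Concretely, for fixed $\la>1$ and $a(z_0)=\varepsilon\la^{-(q-p)}$ with $\varepsilon\to 0^+$, one has $\La\approx\la^p$ and the $\la$-exponents cancel, but $a(z_0)^{-\theta(p-2)}\approx\varepsilon^{-\theta(p-2)}\to\infty$. (When $p=2$ the offending factor disappears, which is why the bookkeeping looks plausible.)

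What the paper does instead — and what you are missing — is to invoke Lemma~\ref{q_trick}, which gives the $(p,q)$-intrinsic bound $\fiint_{G_s^\la(z_0)}|\na u|^p\,dz\le c\la^p$. Combined with $q-1\le p$ (which is exactly where the constraint $q\le p+\alpha\le p+1$ from~\eqref{range_pq} enters), Jensen yields $\fiint|\na u|^{q-1}\,dz\le c\la^{q-1}$, hence $\fiint|\na u|^{q-1}\,dz\le c\la^{q-2}\bigl(\fiint|\na u|^{q-1}\bigr)^{1/(q-1)}$. Splitting $\frac{\la^2}{\La}$ into $\la^{2-p}$ and $\la^{2-q}$ (a step you omit), this bound controls both $\la^{2-p}\fiint|\na u|^{p-1}$ and $\la^{2-q}\fiint|\na u|^{q-1}$ by $c\bigl(\fiint|\na u|^{q-1}\bigr)^{1/(q-1)}$, after which raising to the power $\theta q$, applying Jensen (using $\theta q>q-1$), and multiplying by $a(z_0)^\theta$ closes the estimate cleanly with no residual $a(z_0)$ or $\La$ factors. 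Your argument never invokes Lemma~\ref{q_trick}, and without it the $p$-piece cannot be absorbed.
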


\begin{proof}
    Applying Lemma~\ref{q_poincare_lem_pre}, we obtain
        \begin{align*}
        \begin{split}
            &a(z_0)^\theta\fiint_{G_{s}^\la(z_0)} \frac{|u-u_{G_{s}^\la(z_0)}|^{\theta q}}{s^{\theta q}}\,dz\\
            &\le c\fiint_{G_s^\la(z_0)} H(z_0,|\na u|)^{\theta}\,dz+ca(z_0)^{\theta}\left(\frac{\la^2}{\La}\fiint_{G_s^\la(z_0)}(|\na u|^{p-1}+a(z_0)|\na u|^{q-1})
   \,dz\right)^{\theta q}\\
   &\qquad + ca(z_0)^{\theta}\left(\frac{\la^2}{\La}\fiint_{G_s^\la(z_0)}(|F|^{p-1}+a(z_0)|F|^{q-1})
   \,dz\right)^{\theta q}\,,
        \end{split}
    \end{align*}
    where we used $a(z_0)^{\theta}|\na u|^{\theta q}\le H(z_0,|\na u|)^{\theta}$. To estimate the remaining terms, we observe that
    \begin{align*}
        &\frac{\la^2}{\La}\fiint_{G_s^\la(z_0)}(|\na u|^{p-1}+a(z_0)|\na u|^{q-1})
   \,dz\\
   &=\frac{\la^2}{\la^{p}+a(z_0)\la^{q}}\fiint_{G_s^\la(z_0)}(|\na u|^{p-1}+a(z_0)|\na u|^{q-1})
   \,dz\\
   &\le \la^{2-p}\fiint_{G_s^\la(z_0)}|\na u|^{p-1}\,dz+\la^{2-q}\fiint_{G_s^\la(z_0)}|\na u|^{q-1}
   \,dz\,.
    \end{align*}
    Therefore, using  H\"older's inequality and the facts that $q-1\le p$ and Lemma~\ref{q_trick} so that
    \begin{align*}
        \begin{split}
            \fiint_{G_s^\la(z_0)}|\na u|^{q-1} \,dz
            &\le \left(\fiint_{G_s^\la(z_0)}|\na u|^{p} \,dz \right)^\frac{(q-2)}{p} \left(\fiint_{G_s^\la(z_0)}|\na u|^{q-1} \,dz\right)^\frac{1}{q-1}\\
            &\le \la^{q-2}\left(\fiint_{G_s^\la(z_0)}|\na u|^{q-1} \,dz\right)^\frac{1}{q-1}\,.
        \end{split}
    \end{align*}
    We obtain
    \begin{align*}
            \frac{\la^2}{\La}\fiint_{G_s^\la(z_0)}(|\na u|^{p-1}+a(z_0)|\na u|^{q-1})
   \,dz\le 2 \left(\fiint_{G_s^\la(z_0)}|\na u|^{q-1}\,dz\right)^\frac{1}{q-1}\,.
    \end{align*}
    Since we set $q-1<\theta q$, it follows by Lemma~\ref{q_trick} that
    \begin{align*}
        \begin{split}
            &\left(\frac{\la^2}{\La}\fiint_{G_s^\la(z_0)}(|\na u|^{p-1}+a(z_0)|\na u|^{q-1})
   \,dz\right)^{\theta q}\\
    &\le \left(2 \left(\fiint_{G_s^\la(z_0)}|\na u|^{q-1}\,dz\right)^\frac{1}{q-1}\right)^{\theta q}\,.
        \end{split}
    \end{align*}
    Hence, we obtain
    \begin{align*}
        \begin{split}
            a(z_0)^{\theta}\left(\frac{\la^2}{\La}\fiint_{G_s^\la(z_0)}(|\na u|^{p-1}+a(z_0)|\na u|^{q-1})
   \,dz\right)^{\theta q} &\le 2^q a(z_0)^\theta\fiint_{G_s^\la(z_0)}|\na u|^{\theta q}\,dz\\
   &\le 2^q \fiint_{G_s^\la(z_0)} H(z_0,|\na u|)^{\theta}\,dz\,.
        \end{split}
    \end{align*}
    The same argument holds by replacing $|\na u|$ with $|F|$.
    The proof is completed.
\end{proof}

As in the $p$-intrinsic case, we denote
\[
S(u,G_{s}^\la(z_0))=\sup_{J_{s}^\la(t_0)}\fint_{B_{s}(x_0)}\frac{|u-u_{G_{s}^\la(z_0)}|^2}{s^2}\,dx\,.
\]
For the reverse H\"older inequality, we again prove the following estimate.
\begin{lemma}\label{q_sup_lem}
Suppose \ref{q1} and \ref{q2}.
	There exists a constant $c=c(n,N,p,q,\nu,L,\ca)$ such that 
	\[
		S(u,G_{s}^\la(z_0))=\sup_{J^\la_{2\rho}(t_0)}\fint_{B_{2\rho}(x_0)}\frac{|u-u_{G_{2\rho}^\la(z_0)}|^2}{\rho^2}\,dx\le c\la^2.
	\]	
\end{lemma}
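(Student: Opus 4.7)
The plan is to mirror Lemma~\ref{p_sup_lem} within the $(p,q)$-intrinsic geometry. I fix $2\rho \le \rho_1 < \rho_2 \le 4\rho$ and apply the Caccioppoli inequality (Lemma~\ref{caccio_lem}) on the nested pair $G_{\rho_1}^\la \subset G_{\rho_2}^\la$, corresponding to $r=\rho_1$, $R=\rho_2$, $\tau=\la\La^{-1}\rho_1$ and $\ell=\la\La^{-1}\rho_2$. Since the supremum on the left is normalized by $\tau^2=(\la/\La)^2\rho_1^2$, multiplying both sides by $(\la/\La)^2$ produces a bound of the form $S(u, G_{\rho_1}^\la) \le (\la/\La)^2\bigl[\,\mathrm{I} + \mathrm{II} + \mathrm{III}\,\bigr]$, where $\mathrm{I}$ is the mixed $p/q$-moment, $\mathrm{II}$ is the gluing term, and $\mathrm{III}$ is the forcing integral.

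To control $\mathrm{I}$, I invoke Lemma~\ref{q_poincare_11} and get $\fiint_{G_{\rho_2}^\la}\rho_2^{-p}|u-u_{G_{\rho_2}^\la}|^p\,dz \le c\la^p$; combined with $\La \ge \la^p$, the $p$-summand contributes at most $c\la^2\rho_2^p(\rho_2-\rho_1)^{-p}$. For the $q$-summand I first use~\ref{q1} to replace $a(z)$ by $a(z_0)$ up to constants, then apply Lemma~\ref{q_poincare_2} with $\theta=1$ and invoke~\ref{q5} to bound $a(z_0)\fiint_{G_{\rho_2}^\la}\rho_2^{-q}|u-u_{G_{\rho_2}^\la}|^q\,dz \le c\La$; the resulting factor $(\la/\La)^2\La = \la^2/\La$ is absorbed into $c\la^2$ since $\La > 1+\|a\|_{L^\infty(\Om_T)}\ge 1$. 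The term $\mathrm{III}$ is handled analogously using~\ref{q5}.

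In the gluing term $\mathrm{II}$, the factors $(\la/\La)^2$ and $(\ell^2-\tau^2)^{-1} = (\la/\La)^{-2}(\rho_2^2-\rho_1^2)^{-1}$ cancel exactly, leaving $c\rho_2^2(\rho_2^2-\rho_1^2)^{-1}\fiint_{G_{\rho_2}^\la}\rho_2^{-2}|u-u_{G_{\rho_2}^\la}|^2\,dz$. I then apply Lemma~\ref{sobolev_lem} with $\sigma=2$, $s=p$, $r=2$, $\vartheta=1/2$ slicewise, follow with H\"older in time to pull out $\sup_t$, and control the remaining $p$-moment using Lemma~\ref{q_poincare_11} together with $\fiint_{G_{\rho_2}^\la}|\na u|^p\,dz \le c\la^p$ coming from Lemma~\ref{q_trick}. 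The outcome is a bound of the form $c\la\cdot S(u, G_{\rho_2}^\la)^{1/2}$ times a dimensional prefactor.

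Collecting all three estimates, I reach
\[
S(u, G_{\rho_1}^\la) \le c\la^2\frac{\rho_2^{q+2}}{(\rho_2-\rho_1)^{q+2}} + c\frac{\rho_2^2}{\rho_2^2-\rho_1^2}\la\,S(u, G_{\rho_2}^\la)^{1/2}\,,
\]
and Young's inequality peels off $\tfrac{1}{2}S(u, G_{\rho_2}^\la)$ from the last summand so that the iteration Lemma~\ref{tech_lem} closes to $S(u, G_{2\rho}^\la) \le c\la^2$. The main technical hurdle will be bookkeeping the $\la/\La$ ratios so that each term collapses to a constant multiple of $\la^2$; the identity that unlocks the argument is $\La \ge \la^p \ge \la^2$, valid since $p\ge 2$ and $\la > 1$ as $\La > 1+\|a\|_{L^\infty(\Om_T)}$, which absorbs every residual $1/\La$ factor produced by the $(p,q)$-intrinsic time scaling.
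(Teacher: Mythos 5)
Your proof mirrors the paper's own almost step for step: Caccioppoli on nested $(p,q)$-intrinsic cylinders, bounding the mixed $p/q$-moment term via Lemma~\ref{q_poincare_11}, Lemma~\ref{q_poincare_2}, \ref{q1} and~\ref{q5}, slicewise Gagliardo--Nirenberg combined with Lemma~\ref{q_trick} and Lemma~\ref{q_poincare_11} for the $L^2$-gluing term, and then Young's inequality followed by Lemma~\ref{tech_lem}. The only cosmetic divergence is your time scaling $\tau^2=\la^2\La^{-2}\rho_1^2$, read off the printed definition of $J_\rho^\la$, whereas the paper's own proof (and the stopping-time condition~\ref{q5}, Lemma~\ref{q_trick}, Lemma~\ref{q_poincare_lem_pre}, etc.) is written for $\tau^2=\la^2\La^{-1}\rho_1^2$ -- an internal inconsistency in the paper's preliminaries -- but since $\La>1$ your extra factor of $\La^{-1}$ only tightens the intermediate estimates, so the argument closes identically once the two conventions are aligned.
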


\begin{proof}
The proof is analogous to the proof in Lemma~\ref{p_sup_lem}.
	Let $2\rho\le \rho_1<\rho_2\le 4\rho$. We apply Lemma~\ref{caccio_lem} and \ref{q1} to have
	\begin{align}\label{q_sup_est}
		\begin{split}
			&\frac{\La}{\la^2}S(u,G_{\rho_1}^\la(z_0))\\
			&\le \frac{c\rho_2^q}{(\rho_2-\rho_1)^q}\fiint_{G_{\rho_2}^\la(z_0)}\left(\frac{|u-u_{G_{\rho_2}^\la(z_0)}|^p}{\rho_2^p}+a(z_0)\frac{|u-u_{G_{\rho_2}^\la(z_0)}|^q}{\rho_2^q}\right)\,dz\\
			&\qquad+\frac{c \rho_2^2}{(\rho_2-\rho_1)^2}\frac{\La}{\la^2}\fiint_{G_{\rho_2}^\la(z_0)}\frac{|u-u_{G_{\rho_2}^\la(z_0)}|^2}{\rho_2^2}\,dz+c\fiint_{G_{\rho_2}^\la(z_0)}  H(z,|F|) \,dz
		\end{split}
	\end{align}
   where $c=c(n,p,q,\nu,L,\ca)$. 
    To estimate the first term on the right hand side, we apply Lemma~\ref{q_poincare_1} and Lemma~\ref{q_poincare_2} along with \ref{q1} and \ref{q5}. Then
    \begin{align*}
        \begin{split}
            &\fiint_{G_{\rho_2}^\la(z_0)}\left(\frac{|u-u_{G_{\rho_2}^\la(z_0)}|^p}{\rho_2^p}+a(z_0)\frac{|u-u_{G_{\rho_2}^\la(z_0)}|^q}{\rho_2^q}\right)\,dz\\
            &\le c \fiint_{G_{\rho_2}^\la(z_0)} ( H(z_0,|\na u|)+ H(z_0,|F|) )\,dz\\
            &\le c \fiint_{G_{\rho_2}^\la(z_0)} ( H(z,|\na u|)+ H(z,|F|) )\,dz\\
            &\le c\La\,.
        \end{split}
    \end{align*}
    
    On the other side, for the second term, we use Poincar\'e inequality in the spatial direction, Lemma~\ref{q_trick} and Lemma~\ref{q_poincare_11} as in the proof of Lemma~\ref{p_sup_lem}. Then,
	\begin{align*}
		\begin{split}
			\fiint_{G_{\rho_2}^\la(z_0)}\frac{|u-u_{G_{\rho_2}^\la(z_0)}|^2}{\rho_2^2}\,dz
			&\le c\fint_{J_{\rho_2}^\la(t_0)}\left(\fint_{B_{\rho_2}(x_0)} \Bigg( \frac{|u-u_{G_{\rho_2}^\la(z_0)}|^p}{\rho_2^p}+|\na u|^p \Bigg) \,dx\right)^\frac{1}{p}\\
            &\qquad\times\left(\fint_{B_{\rho_2}(x_0)}\frac{|u-u_{G_{\rho_2}^\la(z_0)}|^2}{\rho_2^2}\,dx\right)^\frac{1}{2}\,dt\\
			&\le c\la \left(S(u,G_{\rho_2}^\la(z_0))\right)^\frac{1}{2}\,,
		\end{split}
	\end{align*}
 where $c=c(n,N,p,\ca)$.	Finally, \ref{q5} gives
 \[
 \fiint_{G_{\rho_2}^\la(z_0)}  H(z,|F|) \,dz\le \La\,.
 \] 
 Dividing \eqref{q_sup_est} by $\tfrac{\La}{\la^2}$, we get
	\begin{align*}
		\begin{split}
			&S(u,G_{\rho_1}^\la(z_0))\le c\frac{\rho_2^q}{(\rho_2-\rho_1)^q}\la^2+c\frac{ \rho_2^2}{(\rho_2-\rho_1)^2}\la\ S(u,G_{\rho_2}^\la(z_0))^\frac{1}{2}\,.
		\end{split}
	\end{align*}
 Moreover, Young's inequality gives
    \begin{align*}
    	\begin{split}
    		&S(u,G_{\rho_1}^\la(z_0))\le \frac{1}{2}S(u,G_{\rho_2}^\la(z_0))+ c\frac{\rho_2^{q+2}}{(\rho_2-\rho_1)^{q+2}}\la^2\,.
    	\end{split}
    \end{align*}
	The conclusion follows by Lemma~\ref{tech_lem}.
\end{proof}

In order to obtain the reverse H\"older inequality, we estimate each term on the right hand side of the Caccioppoli inequality.
\begin{lemma}\label{q_reverse_lem_pre}
Suppose \ref{q1} and \ref{q2}.
	There exist constants $c=c(n,N,p,q,\nu,L,\ca)$ and $\theta_0=\theta_0(n,p,q)\in(0,1)$, such that for any $\theta\in(\theta_0,1)$ we have
	\begin{align*}
		\begin{split}
			&\fiint_{G_{2\rho}^\la(z_0)} \left(\frac{|u-u_{G_{2\rho}^\la(z_0)}|^p}{\rho^p}+a(z_0)\frac{|u-u_{G_{2\rho}^\la(z_0)}|^q}{\rho^q}  \right)\,dz\\
			&\le  c\La^{1-\theta}\fiint_{G_{2\rho}^\la(z_0)}   (H(z_0,|\na u|)+H(z_0,|F|)  )^\theta\,dz\,.
		\end{split}
	\end{align*}
\end{lemma}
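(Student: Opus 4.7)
The proof will mirror the structure of Lemma~\ref{p_reverse_lem_pre}, treating the $p$-part and the $q$-part of the left-hand side separately, but now on $(p,q)$-intrinsic cylinders $G_{2\rho}^\la(z_0)$, and invoking the $(p,q)$-intrinsic analogues of the auxiliary estimates: Lemma~\ref{q_poincare_1}, Lemma~\ref{q_poincare_2}, and Lemma~\ref{q_sup_lem}. I will fix $\theta_0 = \max\{n/(n+2), (q-1)/q\} \in (0,1)$, so that for $\theta \in (\theta_0, 1)$ both the Gagliardo--Nirenberg interpolation (Lemma~\ref{sobolev_lem}) with exponents $(\sig, s, r, \vartheta) = (p, \theta p, 2, \theta)$ and $(\sig, s, r, \vartheta) = (q, \theta q, 2, \theta)$ is admissible, and Lemmas~\ref{q_poincare_1}--\ref{q_poincare_2} apply.

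For the $p$-part, I first apply Lemma~\ref{sobolev_lem} slicewise in time with the choice $(\sig,s,r,\vartheta)=(p,\theta p, 2, \theta)$ and then average in $t$, yielding
\[
\fiint_{G_{2\rho}^\la(z_0)}\frac{|u-u_{G_{2\rho}^\la(z_0)}|^p}{\rho^p}\,dz \le c \fiint_{G_{2\rho}^\la(z_0)}\left(\frac{|u-u_{G_{2\rho}^\la(z_0)}|^{\theta p}}{\rho^{\theta p}} + |\na u|^{\theta p}\right)\,dz\cdot \bigl(S(u,G_{2\rho}^\la(z_0))\bigr)^{\frac{(1-\theta)p}{2}}.
\]
Using Lemma~\ref{q_poincare_1} to bound the first factor by $c\fiint (H(z_0,|\na u|) + H(z_0,|F|))^\theta\,dz$ and Lemma~\ref{q_sup_lem} giving $S(u,G_{2\rho}^\la(z_0))\le c\la^2$, the supremum contributes $c\la^{(1-\theta)p}$. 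Since $\la^p \le \La$ we conclude $\la^{(1-\theta)p} \le \La^{1-\theta}$, which delivers the desired bound for the $p$-part.

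For the $q$-part, I repeat the argument with $(\sig,s,r,\vartheta)=(q,\theta q,2,\theta)$, obtaining
\[
\fiint_{G_{2\rho}^\la(z_0)}\frac{|u-u_{G_{2\rho}^\la(z_0)}|^q}{\rho^q}\,dz \le c\fiint_{G_{2\rho}^\la(z_0)}\left(\frac{|u-u_{G_{2\rho}^\la(z_0)}|^{\theta q}}{\rho^{\theta q}}+|\na u|^{\theta q}\right)\,dz\cdot \bigl(S(u,G_{2\rho}^\la(z_0))\bigr)^{\frac{(1-\theta)q}{2}}.
\]
Multiplying by $a(z_0)$ and writing $a(z_0) = a(z_0)^{\theta}\cdot a(z_0)^{1-\theta}$, I apply Lemma~\ref{q_poincare_2} to absorb the factor $a(z_0)^\theta$ into the Poincaré--type estimate of the first parenthesis by $c\fiint (H(z_0,|\na u|) + H(z_0,|F|))^\theta\,dz$. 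The supremum is once again estimated by Lemma~\ref{q_sup_lem}, producing $c\la^{(1-\theta)q}$. The remaining factor becomes $a(z_0)^{1-\theta}\la^{(1-\theta)q} = (a(z_0)\la^q)^{1-\theta} \le \La^{1-\theta}$.

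Summing the two estimates yields the claim. The mildly delicate step is the balancing in the $q$-part: the key identity $a(z_0)^{1-\theta}\la^{(1-\theta)q} = (a(z_0)\la^q)^{1-\theta}\le\La^{1-\theta}$, combined with the fact that Lemma~\ref{q_poincare_2} already carries the factor $a(z_0)^\theta$ on its left-hand side, is exactly what makes the $(p,q)$-intrinsic scaling close. The admissibility window $\theta\in(\theta_0,1)$ is chosen so that both the Sobolev interpolation and the parabolic Poincaré inequalities of Lemmas~\ref{q_poincare_1}--\ref{q_poincare_2} can be simultaneously invoked, which is the only technical constraint on $\theta_0$.
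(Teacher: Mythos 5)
Your proof is correct and takes essentially the same approach as the paper: split the left-hand side into the $p$-part and the $q$-part, apply the Gagliardo--Nirenberg interpolation (Lemma~\ref{sobolev_lem}) slicewise with exponents $(\sig,s,r,\vartheta)=(p,\theta p,2,\theta)$ respectively $(q,\theta q,2,\theta)$, then close the estimate via Lemma~\ref{q_poincare_1}, Lemma~\ref{q_poincare_2} and the $L^\infty$--$L^2$ bound of Lemma~\ref{q_sup_lem}, finally combining $\la^{(1-\theta)p}\le\La^{1-\theta}$ and $a(z_0)^{1-\theta}\la^{(1-\theta)q}=(a(z_0)\la^q)^{1-\theta}\le\La^{1-\theta}$. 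The only cosmetic difference is that you make the admissibility threshold $\theta_0=\max\{n/(n+2),(q-1)/q\}$ explicit, whereas the paper leaves the dependence $\theta_0=\theta_0(n,p,q)$ implicit.
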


\begin{proof}
	To estimate the $p$-exponent part in the first term, we apply Lemma~\ref{sobolev_lem} with  $\sig=p$, $s=\theta p$, $r=2$ and $\vartheta = \theta\in(n/(n+2),1)$. Then, 
	\begin{align*}
 \begin{split}
			&\fiint_{G_{2\rho}^\la(z_0)}\frac{|u-u_{G_{2\rho}^\la(z_0)}|^p}{\rho^p}\,dz\\
			&\le c\fiint_{G_{2\rho}^\la(z_0)}\left(\frac{|u-u_{G_{2\rho}^\la(z_0)}|^{\theta p}}{\rho^{\theta p}}+|\na u|^{\theta p}\right)\,dz\left(S(u,G_{2\rho}^\la(z_0))\right)^{\frac{(1-\theta)p}{2}}\,,
		\end{split}
	\end{align*}
 where $c=c(n,N,p)$. 
 Similarly, replacing $p$ by $q$, there holds
 	\begin{align*}
 \begin{split}
			&a(z_0)\fiint_{G_{2\rho}^\la(z_0)}\frac{|u-u_{G_{2\rho}^\la(z_0)}|^q}{\rho^q}\,dz\\
			&\le c\fiint_{G_{2\rho}^\la(z_0)}a(z_0)^{\theta}\left(\frac{|u-u_{G_{2\rho}^\la(z_0)}|^{\theta q}}{\rho^{\theta q}}+|\na u|^{\theta q}\right)dz\ \left(a(z_0)^{1-\theta} S(u,G_{2\rho}^\la(z_0))^{\frac{(1-\theta)q}{2}}\right)\,.
		\end{split}
	\end{align*}
 Employing Lemma~\ref{q_poincare_1}, Lemma~\ref{q_poincare_2} and Lemma~\ref{q_sup_lem}, we get
\begin{align*}
    \begin{split}
        &\fiint_{G_{2\rho}^\la(z_0)} \left(\frac{|u-u_{G_{2\rho}^\la(z_0)}|^p}{\rho^p}+a(z_0)\frac{|u-u_{G_{2\rho}^\la(z_0)}|^q}{\rho^q}  \right)\,dz\\
         &\le c\fiint_{G_{2\rho}^\la(z_0)}  (H(z_0,|\na u|) +H(z_0,|F|)  )^\theta\,dz (H(z_0,\la))^{1-\theta}\\
         &= c\La^{1-\theta} \fiint_{G_{2\rho}^\la(z_0)} ( H(z_0,|\na u|) +H(z_0,|F|) )^\theta\,dz\,.
    \end{split}
\end{align*} 
This completes the proof.
 
\end{proof}

\begin{lemma}\label{q_reverse_lem_pre_2}
Suppose \ref{q1} and \ref{q2}.
There exist constants $c=c(n,N,p,q,\nu,L,\ca)$ and $\theta_0=\theta_0(n,p,q)\in(0,1)$, such that for any $\theta\in(\theta_0,1)$ we have
    \begin{align*}
        \begin{split}
            &\frac{\La}{\la^2}\fiint_{G_{2\rho}^\la(z_0)}\frac{|u-u_{G_{2\rho}^\la(z_0)}|^2}{\rho^2}\,dz\\
            &\le c\la^{p-1} \left(\fiint_{G_{2\rho}^\la(z_0)}|\na u|^{\theta p}\,dz\right)^\frac{1}{\theta p}+ca(z_0)^\frac{q-1}{q}\la^{q-1} \left(\fiint_{G_{2\rho}^\la(z_0)} a(z_0)^\theta|\na u|^{\theta q}\,dz\right)^\frac{1}{\theta q}\\
            &\qquad+c\la \left(\fiint_{G_{2\rho}^\la(z_0)}|\na u|^{\theta p}\,dz\right)^\frac{p-1}{\theta p}+ca(z_0)^\frac{1}{q}\la \left(\fiint_{G_{2\rho}^\la(z_0)} a(z_0)^\theta|\na u|^{\theta q}\,dz\right)^\frac{q-1}{\theta q}\\
            &\qquad+c\la \left(\fiint_{G_{2\rho}^\la(z_0)}|F|^{ p}\,dz\right)^\frac{p-1}{ p}+ca(z_0)^\frac{1}{q}\la \left(\fiint_{G_{2\rho}^\la(z_0)} a(z_0) |F|^{ q}\,dz\right)^\frac{q-1}{ q}\,.
        \end{split}
    \end{align*}
\end{lemma}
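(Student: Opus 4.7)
The left-hand side is built from the single $L^2$--average of $|u - u_{G_{2\rho}^\la(z_0)}|/\rho$, but the prefactor $\La/\la^2$ naturally splits as
\[
\frac{\La}{\la^2} = \la^{p-2} + a(z_0)\la^{q-2},
\]
and I would treat each summand separately. For the term with $\la^{p-2}$, apply the Gagliardo--Nirenberg lemma (Lemma~\ref{sobolev_lem}) slice by slice in time with $\sigma=2$, $s=\theta p$, $r=2$ and $\vartheta=\tfrac12$, arriving at
\[
\fint_{B_{2\rho}(x_0)}\frac{|u-u_{G_{2\rho}^\la(z_0)}|^{2}}{\rho^{2}}\,dx
\le c\left(\fint_{B_{2\rho}(x_0)}\!\Bigl(\tfrac{|u-u_{G_{2\rho}^\la(z_0)}|^{\theta p}}{\rho^{\theta p}}+|\na u|^{\theta p}\Bigr)\,dx\right)^{\!\frac{1}{\theta p}}\!\!\left(\fint_{B_{2\rho}(x_0)}\frac{|u-u_{G_{2\rho}^\la(z_0)}|^{2}}{\rho^{2}}\,dx\right)^{\!\frac12}\!\!.
\]
Averaging in time, pulling out $S(u,G_{2\rho}^\la(z_0))^{1/2}$, applying Jensen and then invoking Lemma~\ref{q_sup_lem} (which yields $S\le c\la^2$) produces the estimate
\[
\la^{p-2}\fiint_{G_{2\rho}^\la(z_0)}\frac{|u-u_{G_{2\rho}^\la(z_0)}|^{2}}{\rho^{2}}\,dz
\le c\la^{p-1}\!\left(\fiint_{G_{2\rho}^\la(z_0)}\!\Bigl(\tfrac{|u-u_{G_{2\rho}^\la(z_0)}|^{\theta p}}{\rho^{\theta p}}+|\na u|^{\theta p}\Bigr)dz\right)^{\!\frac{1}{\theta p}}.
\]
Repeating the same scheme on the $a(z_0)\la^{q-2}$ piece with $s=\theta q$ gives the analogous bound with $\la^{p-1}$ and $\theta p$ replaced by $a(z_0)\la^{q-1}$ and $\theta q$.

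Next, I would remove the $|u-u_{G_{2\rho}^\la(z_0)}|^{\theta m}/\rho^{\theta m}$ pieces by the parabolic Poincar\'e inequality adapted to $(p,q)$-intrinsic cylinders, Lemma~\ref{q_poincare_lem_pre}, applied with $m=p$ and $m=q$, respectively. Taking the $1/(\theta m)$-th root (using subadditivity since $\theta m>1$) transforms the $\la^{m-1}$-factor estimates into sums of (i)~a pure $|\na u|^{\theta m}$-term and (ii)~terms of the form $\la^{m-1}\,\tfrac{\la^2}{\La}\fiint(|\na u|^{p-1}+a(z_0)|\na u|^{q-1})$ and the same with $F$ replacing $\na u$. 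The first group already matches Terms~1 and~2 in the statement, because, for instance,
\[
a(z_0)\la^{q-1}\Bigl(\fiint|\na u|^{\theta q}\,dz\Bigr)^{\!\frac{1}{\theta q}} = a(z_0)^{\frac{q-1}{q}}\la^{q-1}\Bigl(\fiint a(z_0)^{\theta}|\na u|^{\theta q}\,dz\Bigr)^{\!\frac{1}{\theta q}}\!.
\]

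The crucial simplification in the mixed terms is the uniform bound
\[
\la^{m-1}\,\frac{\la^2}{\La} = \frac{\la^{m+1}}{\la^p+a(z_0)\la^q}\le \la \qquad (m\in\{p,q\}),
\]
which reduces the coefficient in front of $\fiint(|\na u|^{p-1}+a(z_0)|\na u|^{q-1})$ (and the $F$-analogue) to just $c\la$. A standard H\"older step then yields $\fiint|\na u|^{p-1}\le (\fiint|\na u|^{\theta p})^{(p-1)/(\theta p)}$ and, after peeling off $a(z_0)^{1/q}$,
\[
\fiint a(z_0)|\na u|^{q-1}\,dz = a(z_0)^{\frac1q}\fiint a(z_0)^{\frac{q-1}{q}}|\na u|^{q-1}\,dz\le a(z_0)^{\frac1q}\Bigl(\fiint a(z_0)^{\theta}|\na u|^{\theta q}\,dz\Bigr)^{\!\frac{q-1}{\theta q}}\!,
\]
valid provided $\theta q\ge q-1$, and similarly for $|F|$ using integer exponent $q/(q-1)$. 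These produce exactly Terms~3--6. The threshold $\theta_{0}=\theta_{0}(n,p,q)\in(0,1)$ arises from combining the GN admissibility condition $s\ge 2n/(n+2)$ (for both $s=\theta p$ and $s=\theta q$) with the Hölder constraint $\theta\ge (q-1)/q$.

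The main obstacle is strictly book-keeping: one has to track the exponents of $\la$, $a(z_0)$, and $\La$ through every H\"older step so that the identity $\la^{m-1}\la^2/\La\le\la$ can be invoked at precisely the right moment, and so that the coefficients $a(z_0)^{(q-1)/q}$ and $a(z_0)^{1/q}$ end up paired correctly with the $q$-scaled gradient averages. No new analytic input beyond Lemmas~\ref{sobolev_lem},~\ref{q_sup_lem} and~\ref{q_poincare_lem_pre} is required.
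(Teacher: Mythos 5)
Your proposal is correct and reaches the same inequality, but it organizes the estimate a little differently from the paper. The paper keeps the factor $\La/\la^2$ intact, applies the Gagliardo--Nirenberg lemma (Lemma~\ref{sobolev_lem}) once with $s=\theta p$ together with Lemma~\ref{q_sup_lem}, then applies Lemma~\ref{q_poincare_lem_pre} with $m=p$, and only at the very end splits $\La/\la = \la^{p-1}+a(z_0)\la^{q-1}$, using a H\"older step to promote $(\fiint|\na u|^{\theta p})^{1/(\theta p)}$ to $(\fiint|\na u|^{\theta q})^{1/(\theta q)}$ in the $a(z_0)$-weighted part. You instead split $\La/\la^2=\la^{p-2}+a(z_0)\la^{q-2}$ up front and run Gagliardo--Nirenberg twice, once with $s=\theta p$ and once with $s=\theta q$, followed by the Poincar\'e lemma with $m=p$ and $m=q$ respectively. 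Both versions produce exactly the six target terms and rely on the same three inputs (Lemma~\ref{sobolev_lem}, Lemma~\ref{q_sup_lem}, Lemma~\ref{q_poincare_lem_pre}); the paper's route is slightly leaner (one GN application, one Poincar\'e application), while yours avoids the intermediate H\"older promotion from $\theta p$ to $\theta q$.

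One small but visible slip you should repair: the displayed identity
\[
\la^{m-1}\,\frac{\la^2}{\La}=\frac{\la^{m+1}}{\la^p+a(z_0)\la^q}\le\la,\qquad m\in\{p,q\},
\]
is \emph{false} for $m=q$ in general, since $\la^q\le\La$ would require $\la^{q-p}\le(1-a(z_0))^{-1}$, which fails when $a(z_0)<1$ and $\la$ is large. Your own decomposition, however, has prefactor $a(z_0)\la^{q-2}$ (not $\la^{q-2}$) on the $q$-piece, so the mixed-term coefficient is really $a(z_0)\la^{q+1}/\La$, and the correct bound is $a(z_0)\la^{q+1}/\La\le\la$, which follows from $a(z_0)\la^q\le\La$. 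Carrying the $a(z_0)$ through the display makes the argument airtight; as written, the identity would not survive a careful reading even though your preceding paragraph tracks the factor correctly.
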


\begin{proof}
     We again apply Lemma~\ref{q_sup_lem} and Lemma~\ref{sobolev_lem} with $\sigma = 2, s= \theta p, \vartheta = \tfrac{1}{2}$ and $ r=2$ where $\theta\in(2n/((n+2)p),1)$. Then, we have
	\begin{align*}
		\begin{split}
			&\fiint_{G_{2\rho}^\la(z_0)}\frac{|u-u_{G_{2\rho}^\la(z_0)}|^2}{\rho^2}\,dz\\
			&\le c \fint_{J^\la_{2\rho}(t_0)}\left(\fint_{B_{2\rho}(x_0)}\left(\frac{|u-u_{G_{2\rho}^\la(z_0)}|^{\theta  p}}{\rho^{\theta  p}}+|\na u|^{\theta  p}\right)\,dx\right)^\frac{1}{\theta p}\left(S(u,G_{2\rho}^\la(z_0))\right)^\frac{1}{2}\,dt\\
			&\le c\la \left(\fiint_{G_{2 \rho}^\la(z_0)}\left(\frac{|u-u_{G_{2\rho}^\la(z_0)}|^{\theta  p}}{\rho^{\theta  p}}+|\na u|^{\theta  p}\right)\,dz\right)^\frac{1}{\theta p}\,.
		\end{split}
	\end{align*}
    To proceed further, we apply Lemma~\ref{q_poincare_lem_pre} to obtain
    \begin{align}\label{est_4_19}
        \begin{split}
            &\frac{\La}{\la^2}\fiint_{G_{2\rho}^\la(z_0)}\frac{|u-u_{G_{2\rho}^\la(z_0)}|^2}{\rho^2}\,dz\\
            &\le c\frac{\La}{\la} \left(  \fiint_{G_{2 \rho}^\la(z_0)}|\na u|^{\theta  p}\,dz  \right)^\frac{1}{\theta p}+c\la\fiint_{G_{2\rho}^\la(z_0)}(|\na u|^{p-1}+a(z_0)|\na u|^{q-1})\,dz\\
            &\qquad +c\la\fiint_{G_{2\rho}^\la(z_0)}(|F|^{p-1}+a(z_0)|F|^{q-1})\,dz\,.
        \end{split}
    \end{align}
    We estimate each term on the right hand side. For the first term, we observe
    \begin{align*}
        \begin{split}
            \frac{\La}{\la} \left(  \fiint_{G_{2 \rho}^\la(z_0)}|\na u|^{\theta  p}\,dz  \right)^\frac{1}{\theta p} &=\la^{p-1}   \left(  \fiint_{G_{2 \rho}^\la(z_0)}|\na u|^{\theta  p}\,dz  \right)^\frac{1}{\theta p}\\
            &\qquad+a(z_0)\la^{q-1}\left(  \fiint_{G_{2 \rho}^\la(z_0)}|\na u|^{\theta  p}\,dz  \right)^\frac{1}{\theta p}\,.
        \end{split}
    \end{align*}
    Leaving the first term on the right hand side of the above display, we estimate the second term. We apply H\"older's inequality to have
    \begin{align*}
        \begin{split}
            a(z_0)\la^{q-1}\left(  \fiint_{G_{2 \rho}^\la(z_0)}|\na u|^{\theta  p}\,dz  \right)^\frac{1}{\theta p}
            &\le a(z_0)\la^{q-1}\left(  \fiint_{G_{2 \rho}^\la(z_0)}|\na u|^{\theta  q}\,dz  \right)^\frac{1}{\theta q}\\
            &= a(z_0)^\frac{q-1}{q}\la^{q-1}\left( \fiint_{G_{2 \rho}^\la(z_0)} a(z_0)^{\theta}|\na u|^{\theta  q}\,dz  \right)^\frac{1}{\theta q}\,.
        \end{split}
    \end{align*}

    To estimate the second term in \eqref{est_4_19}, we use H\"older's inequality to get
    \begin{align*}
        \begin{split}
            &\la \fiint_{G_{2\rho}^\la(z_0)}(|\na u|^{p-1}+a(z_0)|\na u|^{q-1})\,dz\\
            &\le \la \left(\fiint_{G_{2\rho}^\la(z_0)}|\na u|^{\theta p}\,dz\right)^\frac{p-1}{\theta p}+a(z_0)^\frac{1}{q}\la \left(\fiint_{G_{2\rho}^\la(z_0)} a(z_0)^\theta|\na u|^{\theta q}\,dz\right)^\frac{q-1}{\theta q}\,.
        \end{split}
    \end{align*}
    Replacing $|\na u|$ by $|F|$ and applying H\"older's inequality, we get
    \begin{align*}
        \begin{split}
            &\la \fiint_{G_{2\rho}^\la(z_0)}(|F|^{p-1}+a(z_0)|F|^{q-1})\,dz\\
            &\le \la \left(\fiint_{G_{2\rho}^\la(z_0)}|F|^{p}\,dz\right)^\frac{p-1}{ p}+a(z_0)^\frac{1}{q}\la \left(\fiint_{G_{2\rho}^\la(z_0)} a(z_0)|F|^{ q}\,dz\right)^\frac{q-1}{ q}\,.
        \end{split}
    \end{align*}
    The proof is completed.
\end{proof}

\begin{lemma}\label{q_reverse_lem}
Suppose \ref{q1} and \ref{q2}.
	There exist constants $c=c(n,N,p,q,\nu,L,\ca)$ and $\theta_0=\theta_0(n,p,q)\in(0,1)$ such that for any $\theta\in(\theta_0,1)$ there holds
	\begin{align*}
    \fiint_{G_{\rho}^\la(z_0)}H(z,|\na u|)\,dz
			\le c\left(\fiint_{G_{2\rho}^\la(z_0)}H(z,|\na u|)^\theta\,dz\right)^\frac{1}{\theta}+c\fiint_{G_{2\rho}^\la(z_0)}H(z,|F|) \,dz\,.
	\end{align*}	
\end{lemma}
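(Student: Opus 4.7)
\emph{Plan.} My approach mirrors the three-step strategy used in Lemma~\ref{p_reverse_lem}: Caccioppoli, estimate the right-hand side via the preparatory lemmas, and close with stopping time and Young's inequality. First I would apply Lemma~\ref{caccio_lem} to the pair $G_\rho^\la(z_0)\subset G_{2\rho}^\la(z_0)$ with the choices $R=2\rho$, $r=\rho$, $\ell=2\la\La^{-1}\rho$, $\tau=\la\La^{-1}\rho$, so that $\tau^2=\ell^2/4$ sits on the admissible boundary and $\ell^2-\tau^2=3\la^2\La^{-2}\rho^2$. This yields
\begin{align*}
\fiint_{G_\rho^\la}H(z,|\na u|)\,dz &\le c\fiint_{G_{2\rho}^\la}\Big(\tfrac{|u-u_{G_{2\rho}^\la}|^p}{\rho^p}+a(z)\tfrac{|u-u_{G_{2\rho}^\la}|^q}{\rho^q}\Big)\,dz \\
&\quad + c\,\tfrac{\La^2}{\la^2}\fiint_{G_{2\rho}^\la}\tfrac{|u-u_{G_{2\rho}^\la}|^2}{\rho^2}\,dz + c\fiint_{G_{2\rho}^\la}H(z,|F|)\,dz.
\end{align*}

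Next, I would bound each of the two non-trivial groups on the right. For the first (polynomial) group, the comparability \ref{q1} turns $a(z)$ into $a(z_0)$ up to a constant, so Lemma~\ref{q_reverse_lem_pre} supplies the bound $c\La^{1-\theta}\fiint_{G_{2\rho}^\la}(H(z_0,|\na u|)+H(z_0,|F|))^\theta\,dz$. For the $L^\infty$--$L^2$ group, I would write $\tfrac{\La^2}{\la^2}=\La\cdot\tfrac{\La}{\la^2}$, apply Lemma~\ref{q_reverse_lem_pre_2}, and then distribute the multiplicative $\La=\la^p+a(z_0)\la^q$ across the six summands there.

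To close the estimate, I would use the stopping-time identity \ref{q4}, $\La=\fiint_{G_\rho^\la}(H(z,|\na u|)+H(z,|F|))\,dz$, together with the bound \ref{q5}, $\fiint_{G_{2\rho}^\la}(H(z,|\na u|)+H(z,|F|))<\La$, to rewrite every remaining factor of $\La$ (and each of the sub-factors $\la^p$ and $a(z_0)\la^q$) as an integral average controlled by $\La$. Young's inequality, applied with Hölder-conjugate exponents matched separately to the $p$- and $q$-phase scalings, produces an $\epsilon\fiint_{G_\rho^\la}H(z,|\na u|)$ term that can be absorbed into the left-hand side; the remaining terms collapse into the desired $c(\fiint_{G_{2\rho}^\la}H(z,|\na u|)^\theta)^{1/\theta}+c\fiint_{G_{2\rho}^\la}H(z,|F|)\,dz$, with the $H(z,|F|)^\theta$-type contributions upgraded to linear $H(z,|F|)$ averages via Jensen's inequality combined with the stopping-time upper bound. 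A final application of \ref{q1} replaces $H(z_0,\cdot)$ by $H(z,\cdot)$ at the price of a $\ca$-dependent constant.

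\emph{Main obstacle.} The hardest step will be the $L^\infty$--$L^2$ block. In the $p$-intrinsic case of Lemma~\ref{p_reverse_lem}, the Caccioppoli coefficient $\la^{p-2}$ matches exactly the left-hand side of Lemma~\ref{p_reverse_lem_pre}; here, by contrast, Caccioppoli produces $\La^2/\la^2$ whereas Lemma~\ref{q_reverse_lem_pre_2} is calibrated to $\La/\la^2$. The extra factor of $\La$ has to be dismantled via the double-phase splitting $\La=\la^p+a(z_0)\la^q$, and each of the four resulting cross-branches $(p,p)$, $(p,q)$, $(q,p)$, $(q,q)$ requires a tailor-made Young's inequality leaning on the stopping-time bound for the relevant $\la$-powers. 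Choreographing these applications so that every residual is either absorbed into the left-hand side or cleanly folded into $(\fiint H^\theta)^{1/\theta}$ is the main bookkeeping challenge.
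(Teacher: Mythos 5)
Your overall strategy — Caccioppoli on $G_\rho^\la\subset G_{2\rho}^\la$, Lemma~\ref{q_reverse_lem_pre} for the polynomial group, Lemma~\ref{q_reverse_lem_pre_2} for the $L^\infty$--$L^2$ group, then Young's inequality plus \ref{q4} to absorb, and \ref{q1} to pass from $H(z_0,\cdot)$ back to $H(z,\cdot)$ — is exactly the paper's structure. However, the ``main obstacle'' you flag is a phantom, caused by misreading the time scale of the $(p,q)$-intrinsic cylinder. The stated definition $J_\rho^\la=I_{\la H(z_0,\la)^{-1}\rho}$ contains a typo: the intended half-length of the time interval is $\la^2 H(z_0,\la)^{-1}\rho^2$, i.e.\ $J_\rho^\la=I_{\la H(z_0,\la)^{-1/2}\rho}$. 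With the correct scaling, the Caccioppoli parameters are $\tau=\la\La^{-1/2}\rho$ and $\ell=2\la\La^{-1/2}\rho$, hence $\ell^2-\tau^2=3\la^2\La^{-1}\rho^2$ and the coefficient on the $L^2$ block is $\tfrac{\La}{\la^2}$, \emph{not} $\tfrac{\La^2}{\la^2}$. This is corroborated everywhere in Section~4: the pre-factor $\tfrac{\la^2}{\La}$ in Lemma~\ref{q_poincare_lem_pre}, the left-hand side $\tfrac{\La}{\la^2}S(u,\cdot)$ in Lemma~\ref{q_sup_lem}, the calibration of Lemma~\ref{q_reverse_lem_pre_2}, and the volume formula $|G_\rho^\la|=2|B_1|\tfrac{\la^2}{\La}\rho^{n+2}$ used in Lemma~\ref{vitali_lem}. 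So the Caccioppoli output feeds Lemma~\ref{q_reverse_lem_pre_2} \emph{directly}; there is no extra factor of $\La$ to dismantle, and the $(p,q)$-case closes by the same template as the $p$-case.

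Separately, even if the extra factor were present, your proposed repair would not close the estimate. After multiplying the right-hand side of Lemma~\ref{q_reverse_lem_pre_2} by $\La=\la^p+a(z_0)\la^q$, the cross-terms carry coefficients on the order of $\la^{2p-1}$, $a(z_0)\la^{p+q-1}$, $a(z_0)^{1+\frac{q-1}{q}}\la^{2q-1}$, etc. No Hölder-conjugate splitting in Young's inequality can cap these by $\tfrac12\La\approx\la^p+a(z_0)\la^q$, since for $\la>1$ these powers strictly exceed anything in the stopping-time budget (for instance $\la^{2p-1}X^{1/(\theta p)}\le\tfrac12\La+cX^{\sigma}$ would require $\la^{(2p-1)a}\lesssim\la^p$ for some Young exponent $a>1$, forcing $a\le p/(2p-1)<1$, a contradiction). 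The phantom obstacle, had it been real, would have been fatal to the chosen route rather than merely a bookkeeping challenge.
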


\begin{proof}
    Employing Lemma~\ref{caccio_lem} and \ref{q1}, it follows that
	\begin{align*}
		\begin{split}
            &\fiint_{G_{\rho}^\la(z_0)}H(z,|\na u|)\,dz\\
			&\le c\fiint_{G_{2\rho}^\la(z_0)}\left(\frac{|u-u_{G_{2\rho}^\la(z_0)}|^p}{\rho^p}+a(z_0)\frac{|u-u_{G_{2\rho}^\la(z_0)}|^q}{\rho^q}\right)\,dz\\
			&\qquad+c\frac{\La}{\la^2}\fiint_{G_{2\rho}^\la(z_0)}\frac{|u-u_{G_{2\rho}^\la(z_0)}|^2}{\rho^2}\,dz + c\fiint_{G_{2\rho}^\la(z_0)}H(z_0,|F|)\,dz\,,
		\end{split}
	\end{align*}
	where $c=c(n,p,q,\nu,L,\ca)$. For the right hand side, we apply Lemma~\ref{q_reverse_lem_pre} and Lemma~\ref{q_reverse_lem_pre_2}. Then, we obtain
    \begin{align*}
        \begin{split}
            &\fiint_{G_{\rho}^\la(z_0)}H(z,|\na u|)\,dz\\
            &\le c\La^{1-\theta}\fiint_{G_{2\rho}^\la(z_0)}   (H(z_0,|\na u|) +H(z_0,|F|)  )^\theta\,dz + c\fiint_{G_{2\rho}^\la(z_0)}   H(z_0,|F|)\,dz\\
            &\qquad+c\la \left(\fiint_{G_{2\rho}^\la(z_0)}|\na u|^{\theta p}\,dz\right)^\frac{p-1}{\theta p}+ca(z_0)^\frac{1}{q}\la \left(\fiint_{G_{2\rho}^\la(z_0)} a(z_0)^\theta|\na u|^{\theta q}\,dz\right)^\frac{q-1}{\theta q}\\
            &\qquad+c\la \left(\fiint_{G_{2\rho}^\la(z_0)}|F|^{ p}\,dz\right)^\frac{p-1}{ p}+ca(z_0)^\frac{1}{q}\la \left(\fiint_{G_{2\rho}^\la(z_0)} a(z_0) |F|^{ q}\,dz\right)^\frac{q-1}{ q}\\
            &\qquad +c\la^{p-1} \left(\fiint_{G_{2\rho}^\la(z_0)}|\na u|^{\theta p}\,dz\right)^\frac{1}{\theta p}\\
            &\qquad+ca(z_0)^\frac{q-1}{q}\la^{q-1} \left(\fiint_{G_{2\rho}^\la(z_0)} a(z_0)^\theta|\na u|^{\theta q}\,dz\right)^\frac{1}{\theta q}\,.
        \end{split}
    \end{align*}
    Using Young's inequality and $\La=\la^p+a(z_0)\la^q$, we deduce
    \begin{align*}
        \begin{split}
            \fiint_{G_{\rho}^\la(z_0)}H(z,|\na u|)\,dz
            &\le \frac{1}{2}\La+ c\left(\fiint_{G_{2\rho}^\la(z_0)}H(z_0,|\na u|)^\theta\,dz\right)^\frac{1}{\theta} \\
            &\qquad+ c\fiint_{G_{2\rho}^\la(z_0)}H(z_0,|F|)\,dz\,,
        \end{split}
    \end{align*}
    where $c=c(n,N,p,q,\nu,L,\ca)$. Finally applying \ref{q4} to absorb the first term on the right hand side to the left hand side and \ref{q1} to replace $H(z_0,|\na u|)\le 2\ca H(z,|\na u|)$ and $H(z_0,|F|)\le 2\ca H(z,|F|)$ in $G_{2\rho}^\la(z_0)$. The proof is completed.
\end{proof}

Recalling the upper level set of $H(z,|\na u|)$ and $H(z,|F|)$ in \eqref{upper_level_set} and \eqref{upper_F}, we close this section with the following fact.
\begin{proposition}
    \label{q_est_vitali}
Suppose \ref{q1} and \ref{q2}.
	There exist constants $\theta_0=\theta_0(n,p,q)\in(0,1)$ and $c=c(n,N,p,q,\nu,L,\ca)$  such that for any $\theta\in(\theta_0,1)$ we have
	\begin{align*}
			\begin{split}
			    \iint_{G_{2\cv\rho}^\la(z_0)}H(z,|\na u|)\,dz
			&\le c\La^{1-\theta}\iint_{G_{2\rho}^\la(z_0)\cap \Psi(c^{-1}\La)}H(z,|\na u|)^\theta\,dz \\
            &\qquad+c\iint_{G_{2\rho}^\la(z_0)\cap \Theta(c^{-1}\La)}H(z,|F|)\,dz\,.
			\end{split}
	\end{align*}
    Here, $\kappa$ is defined in \eqref{K_and_kappa} and it appears in \ref{q5},
\end{proposition}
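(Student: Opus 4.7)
The plan is to follow the same structure as in Proposition~\ref{p_est_vitali}, with Lemma~\ref{q_reverse_lem} replacing Lemma~\ref{p_reverse_lem} and the $(p,q)$-intrinsic stopping-time conditions \ref{q4}--\ref{q5} replacing \ref{p3}--\ref{p4}. To begin, I will take Lemma~\ref{q_reverse_lem}, add $\fiint_{G^\la_\rho(z_0)} H(z,|F|)\,dz$ to both sides (absorbing it into the $F$-average on $G^\la_{2\rho}(z_0)$ via the natural doubling), and use \ref{q4} to identify the resulting left-hand side with $\La$. This yields
\[
\La \le c\left(\fiint_{G^\la_{2\rho}(z_0)} H(z,|\na u|)^\theta\,dz\right)^{1/\theta} + c\fiint_{G^\la_{2\rho}(z_0)} H(z,|F|)\,dz\,.
\]
Jensen's inequality together with \ref{q5} at $s=2\rho$ gives the bound $(\fiint_{G^\la_{2\rho}(z_0)} H(z,|\na u|)^\theta\,dz)^{1/\theta}\le \fiint_{G^\la_{2\rho}(z_0)} H(z,|\na u|)\,dz < \La$, so that $(\fiint_{G^\la_{2\rho}(z_0)} H(z,|\na u|)^\theta\,dz)^{(1-\theta)/\theta}\le \La^{1-\theta}$. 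Factoring one copy of this factor out of the $1/\theta$-power then upgrades the preceding inequality to
\[
\La \le c\La^{1-\theta}\fiint_{G^\la_{2\rho}(z_0)} H(z,|\na u|)^\theta\,dz + c\fiint_{G^\la_{2\rho}(z_0)} H(z,|F|)\,dz\,.
\]

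Next, I will perform a level-set decomposition at a level $M\La$ with $M>0$ to be fixed. Splitting the gradient integral according to $\Psi(M\La)$ and the $F$-integral according to $\Theta(M\La)$, I can bound the contributions from the two complements pointwise by $(M\La)^\theta$ and $M\La$, respectively; combined with the prefactor $\La^{1-\theta}$ in front of the gradient term, these generate a total term of size $(cM^\theta+cM)\La$. Choosing $M$ small enough to enforce $cM^\theta+cM\le 1/2$ lets me absorb this on the left. To pass from a cylinder of radius $2\rho$ to one of radius $2\cv\rho$ on the left, I invoke \ref{q5} once more, this time at $s=2\cv\rho$, which yields $\iint_{G^\la_{2\cv\rho}(z_0)} H(z,|\na u|)\,dz \le \La\,|G^\la_{2\cv\rho}| = \cv^{n+2}\La\,|G^\la_{2\rho}|$ by the homogeneous $(n{+}2)$-scaling of $(p,q)$-intrinsic cylinders. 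Multiplying the absorbed inequality by $\cv^{n+2}|G^\la_{2\rho}|$ and renaming constants so that $M=c^{-1}$ then gives the claim.

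I do not expect a substantive obstacle at this stage, since the analytically delicate work was carried out in Lemma~\ref{q_reverse_lem}, where the comparability \ref{q1} and the elementary inequality $a(z_0)^{1/q}\la \le \La^{1/q}$ were used to absorb the $(q-1)$-growth terms generated by the Caccioppoli inequality. The remaining steps are essentially bookkeeping: the identification of $\La$ via the equality-type stopping-time condition \ref{q4}, the self-improvement trick based on Jensen, and exploiting the strict inequality in \ref{q5} to jump from radius $2\rho$ to radius $2\cv\rho$. The only minor technicality is to track constants so that the final threshold for the level sets takes the required form $c^{-1}\La$.
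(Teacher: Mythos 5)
Your argument is correct and follows the paper's own proof of Proposition~\ref{q_est_vitali} essentially step for step: invoke Lemma~\ref{q_reverse_lem}, identify the left-hand side with $\La$ via \ref{q4} after accounting for the $F$-average by doubling, upgrade the $1/\theta$-power using the Jensen--\ref{q5} self-improvement, decompose into level sets and absorb the subcritical parts, and finally pass from radius $2\rho$ to $2\cv\rho$ via \ref{q5} and the $(n{+}2)$-homogeneous scaling of $(p,q)$-intrinsic cylinders. The only cosmetic difference is that you use a single level $M\La$ for both $\Psi$ and $\Theta$, whereas the paper temporarily uses the two thresholds $(4c)^{-1/\theta}\La$ and $(4c)^{-1}\La$ before unifying them in the final constant.
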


\begin{proof}
    The proof is analogous to the proof of Proposition~\ref{p_est_vitali}. By Lemma~\ref{q_reverse_lem} and \ref{q4}, we have
	\begin{align*}
	    \begin{split}
	        \La=\fiint_{G_{\rho}^\la(z_0)}  H(z,|\na u|) + H(z,|F|) \,dz
   &\le c\La^{1-\theta}\fiint_{G_{2\rho}^\la(z_0)}H(z,|\na u|)^\theta \,dz\\
   &\qquad+ c\fiint_{G_{2\rho}^\la(z_0)} H(z,|F|)\,dz
	    \end{split}
	\end{align*}
    for $c=c(n,N,p,q,\nu,L,\ca)>0$. Denoting $c$ from the above display, we decompose the referenced domain of integrals. We have
     \begin{align*}
         \begin{split}
             \La
         &\le \frac{1}{2}\La 
         +c\frac{\La^{1-\theta}}{|G_{2\rho}^\la|}\iint_{G_{2\rho}^\la(z_0)\cap \Psi((4c)^{-1/\theta}\La)}H(z,|\na u|)^{\theta }\,dz\\
         &\qquad+\frac{c}{|G_{2\rho}^\la|}\iint_{G_{2\rho}^\la(z_0)\cap \Theta((4c)^{-1}\La)}H(z,|F|) \,dz
         \end{split}
     \end{align*}
    and thus, it follows that
     \begin{align*}
         \begin{split}
             \La
         &\le 2c\frac{\La^{1-\theta}}{|G_{2\rho}^\la|}\iint_{G_{2\rho}^\la(z_0)\cap \Psi((4c)^{-1/\theta}\La)}H(z,|\na u|)^{\theta }\,dz\\
         &\qquad+\frac{2c}{|G_{2\rho}^\la|}\iint_{G_{2\rho}^\la(z_0)\cap \Theta((4c)^{-1}\La)}H(z,|F|) \,dz\,.
         \end{split}
     \end{align*}
     We use \ref{q5} to replace the left hand side as follows
\begin{align*}
         \begin{split}
             \fiint_{G_{2\cv\rho}^\la(z_0)}H(z,|\na u|)\,dz
         &\le 2c\frac{\La^{1-\theta}}{|G_{2\rho}^\la|}\iint_{G_{2\rho}^\la(z_0)\cap \Psi((4c)^{-1/\theta}\La)}H(z,|\na u|)^{\theta }\,dz\\
         &\qquad+\frac{2c}{|G_{2\rho}^\la|}\iint_{G_{2\rho}^\la(z_0)\cap \Theta((4c)^{-1}\La)}H(z,|F|) \,dz\,.
         \end{split}
     \end{align*}
     Thus, we get
     \begin{align*}
         \begin{split}
             \iint_{G_{2\cv\rho}^\la(z_0)}H(z,|\na u|)\,dz
         &\le 2\kappa^{n+2}c\La^{1-\theta}\iint_{G_{2\rho}^\la(z_0)\cap \Psi((4c)^{-1/\theta}\La)}H(z,|\na u|)^{\theta }\,dz\\
         &\qquad+2\kappa^{n+2}c\iint_{G_{2\rho}^\la(z_0)\cap \Theta((4c)^{-1}\La)}H(z,|F|) \,dz\,.
         \end{split}
     \end{align*}
     Replacing the constant $c$ above with $(4\kappa^{n+2}c)^\frac{1}{\theta_0}$, the proof is completed.
\end{proof}

\section{Proof of the main result}\label{sec:main-proof}
\subsection{Stopping time argument}\label{stopping time argument}
In this section, we prove that conditions \ref{p1}-\ref{p2} and \ref{q1}-\ref{q2} are satisfied under our regime. First of all, for any $\rho>0$ and $\hbar>1$, we denote the constant in Lemma~\ref{caccio_lem} for $r=\rho$, $R=2\rho$, $\tau^2=\hbar^{2-p}\rho^2$ and $\ell^2=\hbar^{2-p}(2\rho)^2$ by
\begin{align}\label{def_const_caccio}
    \ccc=\ccc(n,p,q,\nu,L)\,.
\end{align}
Let $r\in(0,1)$ and suppose $Q_{4r}(z_0)\subset\Om_T$. We define $\la_0$ and $\La_0$ as
\begin{align}\label{def_la}
		\begin{split}
		    \la^p_0
            &:=\frac{\|u\|_{L^\infty(\Om_T)}^p}{(4r)^p}+ \|a\|_{L^\infty(\Om_T)} \frac{\|u\|_{L^\infty(\Om_T)}^q}{(4r)^q}+ \left( \fiint_{Q_{4r}(z_0)} H(z,|F|)\,dz \right)^\frac{p}{2} +1
		\end{split}
\end{align}
and
\begin{align}\label{def_La}
    \La_0:=\la_0^p+\|a\|_{L^\infty(\Om_T)}\la_0^q\,.
\end{align}
Recall $K$ and $\cc>K^\frac{1}{p}$ from the previous section in \eqref{K_and_kappa},
\[
K=2+10\ca\|u\|_{L^\infty(\Om_T)}^{q-p}\quad\text{and}\quad \cv = 10(1+\cc^{q}\|u\|_{L^\infty(\Om_T)}^{q-p}+\ca +10\cc\ca )\,.
\]
Also, along with notation \eqref{upper_level_set} and \eqref{upper_F}, we write 
\begin{align*}
    \begin{split}
        &\Psi(\varkappa,\rho)=\Psi(\varkappa)\cap Q_{\rho}(z_0)=\{ z\in Q_{\rho}(z_0): H(z,|\na u(z)|)>\varkappa\}\,,\\
        &\Theta(\varkappa,\rho)={\Theta}(\varkappa)\cap Q_{\rho}(z_0)=\{ z\in Q_{\rho}(z_0): H(z,|F(z)|)>\varkappa\}\,.
    \end{split} 
\end{align*}

\begin{lemma}\label{La_breaking_lem}
Let $r\le r_1<r_2\le 2r$. Suppose
\[
\La>(4\ccc)^{ q }\left(\frac{8\cv r}{r_2-r_1}\right)^{\frac{q^2}{p}+\frac{q(n+2)}{2} }\La_0\,.
\]
For any $w\in Q_{4r}(z_0)$, let $\la_w$ be the unique positive number such that $H(w,\la_w)=\La$. Then we have
\[
\la_w> 4\ccc\left(\frac{8\cv r}{r_2-r_1}\right)^{ \frac{q}{p}+\frac{n+2}{2} }\la_0\,.
\]
\end{lemma}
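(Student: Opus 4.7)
The plan is to exploit the fact that $H(w,\cdot)$ is strictly increasing and continuous in its second argument together with a simple quasi-homogeneity scaling that follows from $p\le q$. Write $M:=4\ccc\bigl(\tfrac{8\cv r}{r_2-r_1}\bigr)^{q/p+(n+2)/2}\la_0$, so that the desired conclusion is $\la_w>M$. Since $H(w,\la_w)=\La$ and $H(w,\cdot)$ is strictly monotone, it suffices to show
\[
H(w,M)<\La.
\]

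The first observation is that, by the very definition of $\La_0$ in~\eqref{def_La}, one has $a(w)\le\|a\|_{L^\infty(\Om_T)}$, and therefore
\[
H(w,\la_0)=\la_0^p+a(w)\la_0^q\le\la_0^p+\|a\|_{L^\infty(\Om_T)}\la_0^q=\La_0.
\]
The second observation is the elementary scaling: for any $c\ge1$, since $p\le q$ gives $c^p\le c^q$, one has
\[
H(w,c\la_0)=c^p\la_0^p+a(w)c^q\la_0^q\le c^q\bigl(\la_0^p+a(w)\la_0^q\bigr)=c^q H(w,\la_0).
\]

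Set $A:=\bigl(\tfrac{8\cv r}{r_2-r_1}\bigr)^{q/p+(n+2)/2}$ and note $c:=M/\la_0=4\ccc A\ge1$ because $\ccc\ge1$ and $A\ge1$ (since $r_2-r_1\le 2r$ and $\cv\ge1$). Combining the two displays with $c=4\ccc A$ yields
\[
H(w,M)\le(4\ccc A)^q H(w,\la_0)\le(4\ccc A)^q\La_0=(4\ccc)^q A^q\La_0.
\]
Since $A^q=\bigl(\tfrac{8\cv r}{r_2-r_1}\bigr)^{q^2/p+q(n+2)/2}$, the standing hypothesis on $\La$ reads precisely $\La>(4\ccc)^q A^q\La_0$, so $H(w,M)<\La=H(w,\la_w)$, and strict monotonicity of $H(w,\cdot)$ forces $\la_w>M$, which is the claim.

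The argument has no real obstacle: the only thing to verify is that the exponents on $\tfrac{8\cv r}{r_2-r_1}$ in the statement are exactly what the quasi-homogeneity bound produces, namely that the exponent $q/p+(n+2)/2$ built into $M$ is amplified to $q^2/p+q(n+2)/2$ by the $q$-th power in the scaling $H(w,cs)\le c^qH(w,s)$, which matches the hypothesis as written.
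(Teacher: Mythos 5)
Your proof is correct and is essentially the same argument as the paper's, just phrased directly (via strict monotonicity of $H(w,\cdot)$) rather than by contradiction; the key estimate $H(w,c\la_0)\le c^q\La_0$ for $c\ge1$ is identical in both.
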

\begin{proof}
    It can be proved by contradiction. Suppose the conclusion is false and
    \[
    \la_w\le 4\ccc\left(\frac{8\cv r}{r_2-r_1}\right)^{\frac{q}{p}+\frac{n+2}{2} }\la_0\,.
    \]
    Then, it is easy to see
    \[
    \La=H(w,\la_w) \le (4\ccc)^q\left(\frac{8\cv r}{r_2-r_1}\right)^{ \frac{q^2}{p}+\frac{q(n+2)}{2} }\La_0 <\La\,.
    \]
    Thus, the statement of this lemma is true.
\end{proof}

\begin{lemma}\label{p_stopping_lem}
Let $r\le r_1<r_2\le 2r$. Suppose $w\in \Psi(\La,r_1)$ for 
\[
\La>(4\ccc)^{ q }\left(\frac{8\cv r}{r_2-r_1}\right)^{\frac{q^2}{p}+\frac{q(n+2)}{2} }\La_0\,.
\]
Then for $\la_w$ defined as $H(w,\la_w)=\La$, there exists stopping time $\rho_{ w}\in(0,(r_2-r_1)/(2\cv))$ such that 
for all $s\in (\rho_w,r_2-r_1)$ we have
\[
	\fiint_{Q_{\rho_{ w}}^{\law}( w)} (H(z,|\na u|) + H(z,|F|) )\,dz= \law^p
\]
and 
\[
\fiint_{Q_s^{\law}( w)} (H(z,|\na u|) + H(z,|F|) ) \,dz<\law^p\,.
\]
\end{lemma}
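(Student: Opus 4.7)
The strategy is a standard stopping-time argument on the energy average. Define
\[
\phi(s) := \fiint_{Q_s^{\law}(w)} \bigl( H(z, |\na u|) + H(z, |F|) \bigr)\,dz,\qquad s \in (0, r_2-r_1],
\]
which is continuous in $s$ since $H(z,|\na u|)\in L^1(\Om_T)$ and $H(z,|F|)\in L^\gamma\subset L^1$. I will take
\[
\rho_w := \sup\bigl\{ s \in (0, (r_2-r_1)/(2\cv)] : \phi(s) \ge \law^p \bigr\}
\]
and verify that the set is nonempty and that $\rho_w<(r_2-r_1)/(2\cv)$. Restricting to Lebesgue points $w$ of the integrand (a full-measure restriction), the hypothesis $w\in\Psi(\La,r_1)$ gives $\lim_{s\to 0^+}\phi(s)=H(w,|\na u(w)|)+H(w,|F(w)|)>\La\ge\law^p$, where the last inequality uses $\La=\law^p+a(w)\law^q\ge\law^p$. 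Hence $\phi(s)>\law^p$ for sufficiently small $s>0$.

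Next I show $\phi(s)<\law^p$ for every $s\in[(r_2-r_1)/(2\cv),r_2-r_1)$. Since $r\le r_1<r_2\le 2r$, $p\ge 2$, and $\law\ge 1$ (forced by the size of $\La$), the inclusion $Q_s^{\law}(w)\subset Q_{2r}(z_0)\subset Q_{4r}(z_0)$ holds via the elementary estimate $(r_1+s)^2\le(2r)^2$ spatially and, using $\law^{(2-p)/2}\le 1$, temporally. I then apply Caccioppoli's inequality (Lemma~\ref{caccio_lem}) with outer cylinder $Q_{4r,4r}(z_0)=Q_{4r}(z_0)$ and inner $Q_{2r,2r}(z_0)=Q_{2r}(z_0)$: using $|u-(u)_{Q_{4r}(z_0)}|\le 2\|u\|_{L^\infty(\Om_T)}$ together with the definition \eqref{def_la} of $\la_0$, every boundary-data term on the right-hand side is controlled by a multiple of $\la_0^p$, and $\fiint_{Q_{4r}}H(z,|F|)\le \la_0^2\le\la_0^p$, yielding
\[
\fiint_{Q_{2r}(z_0)}\bigl(H(z,|\na u|)+H(z,|F|)\bigr)\,dz\le c_\star\la_0^p,
\]
with $c_\star$ a multiple of the Caccioppoli constant $\ccc$ from \eqref{def_const_caccio}. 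Combined with $|Q_s^{\law}(w)|=c_n s^{n+2}\law^{2-p}$ this gives
\[
\phi(s)\le \frac{c_\star\la_0^p\,|Q_{2r}(z_0)|}{|Q_s^{\law}(w)|} \le c_\star\Bigl(\tfrac{4\cv r}{r_2-r_1}\Bigr)^{n+2}\law^{p-2}\la_0^p,
\]
so $\phi(s)<\law^p$ reduces to $c_\star(4\cv r/(r_2-r_1))^{n+2}\la_0^p<\law^2$. This is obtained by squaring the conclusion $\law>4\ccc(8\cv r/(r_2-r_1))^{q/p+(n+2)/2}\la_0$ of Lemma~\ref{La_breaking_lem} and matching exponents: the surplus factor $(8\cv r/(r_2-r_1))^{2q/p}$ in $\law^2/\la_0^2$ and the specific choice of $\ccc,\cv$ in \eqref{def_const_caccio} and \eqref{K_and_kappa} are designed to absorb $c_\star$ (this is where the range $q\le p+\alpha$ enters).

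Finally, continuity of $\phi$ combined with the two-sided behavior above forces $\phi(\rho_w)=\law^p$ by the intermediate-value theorem, and the upper-bound step forces $\rho_w<(r_2-r_1)/(2\cv)$. For $s\in(\rho_w,(r_2-r_1)/(2\cv)]$ the strict inequality $\phi(s)<\law^p$ is immediate from the maximality in the definition of $\rho_w$, while for $s\in((r_2-r_1)/(2\cv),r_2-r_1)$ it was already established in the upper-bound step; together these give $\phi(s)<\law^p$ throughout $(\rho_w,r_2-r_1)$. The most delicate obstacle is the exponent bookkeeping in the upper-bound step: the $\la_0^p$ naturally produced by Caccioppoli on the $\|u\|_{L^\infty}$-scale must be reconciled with the $\law^2$ factor from the intrinsic volume normalization of $Q_s^{\law}$, which is exactly why Lemma~\ref{La_breaking_lem} is tuned with the precise exponent $q/p+(n+2)/2$ and why the hypothesis on $\La$ carries the corresponding $q^2/p+q(n+2)/2$.
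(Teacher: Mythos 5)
Your overall structure (define $\phi$, show it is $>\law^p$ near $0$ by the Lebesgue point theorem and $<\law^p$ on $[(r_2-r_1)/(2\cv),r_2-r_1)$, then invoke continuity) matches the paper, but your upper-bound step takes a genuinely different route that does not close. The paper applies the Caccioppoli inequality \emph{on the intrinsic pair} $Q_s^{\law}(w)\subset Q_{2s}^{\law}(w)$, so the only place the scaling factor $\law^{p-2}$ appears is in front of quantities that the definition of $\la_0$ controls by $\la_0^2$ (the time-direction term $\fiint\!|u-\bar u|^2/s^2\lesssim\|u\|_\infty^2/s^2$ and the forcing term $\fiint_{Q_{4r}}H(z,|F|)\le\la_0^2$); the terms controlled only by $\la_0^p$ (the spatial $p$- and $q$-parts) have no $\law^{p-2}$ prefactor. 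Thus each piece reduces to $\law>C\la_0$ or $\law^2>C\la_0^2$, exactly what Lemma~\ref{La_breaking_lem} gives.

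You instead apply Caccioppoli globally on $Q_{2r}(z_0)\subset Q_{4r}(z_0)$ to get $\fiint_{Q_{2r}}(H(z,|\na u|)+H(z,|F|))\le c_\star\la_0^p$, then divide by $|Q_s^{\law}(w)|=c_n s^{n+2}\law^{2-p}$. The volume ratio brings in $\law^{p-2}$ multiplying the \emph{full} $\la_0^p$, so your reduction
\[
c_\star\Bigl(\tfrac{4\cv r}{r_2-r_1}\Bigr)^{n+2}\la_0^p<\law^2
\]
requires $\law^2\gtrsim\la_0^p$. Squaring Lemma~\ref{La_breaking_lem} only yields $\law^2>(4\ccc)^2(8\cv r/(r_2-r_1))^{2q/p+n+2}\la_0^2$, and $\la_0^2\ge\la_0^p$ fails as soon as $p>2$ and $\la_0>1$ (which is forced since $\la_0\ge1$ by construction and $\la_0\to\infty$ as $r\to0$). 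Taking, say, $r_1=r$, $r_2=2r$ fixes the ratio $r/(r_2-r_1)$ to a constant, so your inequality would demand $\la_0^{p-2}\le C(\data)$, which is false for small $r$. The ``surplus factor $(8\cv r/(r_2-r_1))^{2q/p}$'' cannot compensate a gap that scales with $\la_0^{p-2}$. So the upper-bound step, and hence the existence of the stopping time below $(r_2-r_1)/(2\cv)$, is not established by this argument; you need to run Caccioppoli on the intrinsic cylinders as in the paper so that the $\law^{p-2}$ prefactor lands only on $\la_0^2$-sized quantities.
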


\begin{proof}
    For any $s\in [(r_2-r_1)/(2\cv),r_2-r_1)$, we have by Lemma~\ref{caccio_lem} and \eqref{def_const_caccio} that
\begin{align} \label{lem_5_2_est}
	\begin{split}
		&\fiint_{Q_s^{\law}( w)} ( H(z,|\na u|)+ H(z,|F|) )\,dz\\
        &\le \ccc\fiint_{Q_{2s}^{\law}( w)} \left( \frac{|u-(u)_{Q_{2s}^{\law}(w)}|^p}{s^p} +a(z) \frac{|u-(u)_{Q_{2s}^{\law}(w)}|^q}{s^q} \right)\,dz
        \\
        &\qquad+\ccc \law^{p-2} \fiint_{Q_{2s}^{\law}(w)}\frac{|u-(u)_{Q_{2s}^{\law}(w)}|^2}{s^2}\,dz + \ccc \fiint_{Q_{2s}^{\law}(w)} H(z,|F|) \,dz\,.
	\end{split}
\end{align}
For the first term on the right hand side, we estimate as follows
\begin{align*}
    \begin{split}
        &\ccc\fiint_{Q_{2s}^{\law}( w)} \left( \frac{|u-(u)_{Q_{2s}^{\law}(w)}|^p}{s^p} +a(z) \frac{|u-(u)_{Q_{2s}^{\law}(w)}|^q}{s^q} \right)\,dz\\
        &\le 2^q\ccc \fiint_{Q_{2s}^{\law}( w)} \left( \frac{|u|^p}{s^p} +a(z) \frac{|u|^q}{s^q} \right)\,dz\\
        &\le 2^q \ccc\left( \frac{\|u\|_{L^\infty(\Om_T)}^p}{s^p}+\|a\|_{L^\infty(\Om_T)}\frac{\|u\|_{L^\infty(\Om_T)}^q}{s^q}  \right)\\
        &\le \ccc\left(  \frac{8\kappa r}{r_2-r_1} \right)^q\la_0^p\\
        &< \frac{1}{4}\law^p\,,
    \end{split}
\end{align*}
where to obtain the last inequality, we used Lemma~\ref{La_breaking_lem}. 
For the second integral on the right hand side of \eqref{lem_5_2_est}, we use the fact that $p\geq 2$ along with Lemma~\ref{La_breaking_lem} to have
\begin{align*}
    \begin{split}
        &\ccc \law^{p-2} \fiint_{Q_{2s}^{\law}(w)}\frac{|u-(u)_{Q_{2s}^{\law}(w)}|^2}{s^2}\,dz\\
        &\le \law^{p-2} 2^2\ccc  \fiint_{Q_{2s}^{\law}(w)}\frac{|u|^2}{s^2}\,dz\\
        &\le \law^{p-2} 2^2\ccc\frac{\|u\|_{L^\infty(\Om_T)}^2}{s^2}\\
        &\le \law^{p-2} \ccc\left(  \frac{ 8\kappa r }{r_2-r_1} \right)^2\la_0^2\\
        &< \frac{1}{4}\law^{p}\,.
    \end{split}
\end{align*}
And finally using $p\geq 2$ along with Lemma~\ref{La_breaking_lem}, we get
\begin{align*}
    \begin{split}
        \ccc\fiint_{Q_{2s}^{\law}(w)} H(z,|F|) \,dz
        &\le \law^{p-2} \ccc\left( \frac{8\kappa r}{r_2-r_1} \right)^{n+2}\fiint_{Q_{4r}(z_0)} H(z,|F|) \,dz\\
        &\le \law^{p-2} \ccc\left( \frac{8\kappa r}{r_2-r_1} \right)^{n+2}\la_0^2\\
        &< \frac{1}{4}\law^p\,.
    \end{split}
\end{align*}
Combining these estimates, we have
\[
\fiint_{Q_s^{\law}( w)} ( H(z,|\na u|)+ H(z,|F|) )\,dz<\law^p
\]
for all $s\in [ (r_2-r_1)/(2\kappa), r_2-r_1 )$.
On the other hand, since $w\in \Psi(\La,r_1)$ holds, we deduce from the Lebesgue point theorem and continuity of integral with respect to the radius that there exists $\rho_w$ satisfying the statement of this lemma.
\end{proof}
Note that if $\rho_w$ further satisfies
\begin{align}\label{p_intr_ineq}
    K\ge \left( \frac{\|u\|_{L^\infty(\Om_T)}}{\rho_w} \right)^{q-p}\sup_{z\in Q_{4\rho_w}^{\law}(w)}a(z)\,,
\end{align}
then it is $p$-intrinsic at $w$. Along with stopping time argument in the previous lemma, \ref{p1}-\ref{p2} are verified. 
Before we consider the other case, we rewrite Lemma~\ref{lemma_decay} and Lemma~\ref{p_intr} as follows.
\begin{lemma}\label{p_decay_lem}
Let $r\le r_1<r_2\le 2r$.
    Suppose $w\in \Psi(\La,r_1)$.
    Under the same assumptions in Lemma~\ref{p_stopping_lem}, if \eqref{p_intr_ineq} holds, then there exists $\cc=\cc(\data)\ge K^\frac{1}{p}$ such that
    \[
    \rho_w\le \cc\law^{-1}\quad\text{and}\quad \sup_{z\in Q_{4\rho_w}^{\law}(w)}a(z)\le \cc^q\|u\|_{L^\infty(\Om_T)}^{q-p}\law ^{p-q}\,.
    \]
\end{lemma}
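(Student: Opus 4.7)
The plan is to recognise that Lemma~\ref{p_decay_lem} is a direct translation of Lemma~\ref{lemma_decay} and Lemma~\ref{p_intr} to the stopping-time setting, so the work consists in checking that the triple $(w,\rho_w,\law)$ satisfies the structural hypotheses \ref{p1} and \ref{p2} at centre $w$, and then quoting the two earlier lemmas verbatim.

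First I would unpack the output of Lemma~\ref{p_stopping_lem}. The equality
\[
\fiint_{Q_{\rho_w}^{\law}(w)}\bigl(H(z,|\na u|)+H(z,|F|)\bigr)\,dz=\law^{p}
\]
is exactly \ref{p3} at $w$, and since $\rho_w<(r_2-r_1)/(2\cv)$, the interval $(\rho_w,2\cv\rho_w]$ lies inside $(\rho_w,r_2-r_1)$, on which Lemma~\ref{p_stopping_lem} provides the strict inequality. This gives \ref{p4} at $w$. The hypothesis \eqref{p_intr_ineq} is nothing but \ref{p1} rewritten at centre $w$, scale $\rho_w$, level $\law$. Hence \ref{p1}-\ref{p2} are verified at $(w,\rho_w,\law)$.

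A small side check is that $Q_{2\cv\rho_w}^{\law}(w)\subset\Om_T$, which is needed for the earlier lemmas to apply. Because $\La>\La_0\ge 1$ and $H(w,\law)=\La$, one has $\law\ge 1$ and so $\law^{(2-p)/2}\le 1$, giving $I_{2\cv\rho_w}^{\law}(w)\subset I_{2\cv\rho_w}(w)$. Combined with $w\in Q_{r_1}(z_0)$, $2\cv\rho_w<r_2-r_1\le r$ and $Q_{4r}(z_0)\subset\Om_T$, this yields the required enclosure.

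With \ref{p1}-\ref{p2} in force at $(w,\rho_w,\law)$, Lemma~\ref{lemma_decay} applies and delivers $\rho_w\le\cc\law^{-1}$ with the constant $\cc=\cc(\data)\ge K^{1/p}$ fixed there. For the second inequality, feeding $\rho_w\le\cc\law^{-1}$ back into \eqref{p_intr_ineq} gives
\[
\sup_{z\in Q_{4\rho_w}^{\law}(w)}a(z)\le K\|u\|_{L^\infty(\Om_T)}^{q-p}\rho_w^{p-q}\le K\cc^{q-p}\|u\|_{L^\infty(\Om_T)}^{q-p}\law^{p-q}\,,
\]
and using $\cc\ge K^{1/p}$ (so that $K\cc^{q-p}\le\cc^{q}$) absorbs the constant. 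There is no genuine obstacle here; the only delicate point is the bookkeeping around the enclosure $Q_{2\cv\rho_w}^{\law}(w)\subset\Om_T$ and ensuring that the stopping-time interval $(\rho_w,2\cv\rho_w]$ really lies inside $(\rho_w,r_2-r_1)$ — both of which reduce to the quantitative inequality $\rho_w<(r_2-r_1)/(2\cv)$ granted by Lemma~\ref{p_stopping_lem}.
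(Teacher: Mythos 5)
Your plan --- verify \ref{p1}--\ref{p2} at the triple $(w,\rho_w,\law)$, check the enclosure $Q_{2\cv\rho_w}^{\law}(w)\subset\Om_T$, then quote Lemma~\ref{lemma_decay} and Lemma~\ref{p_intr} --- is exactly the paper's intent: the paper supplies no separate proof for Lemma~\ref{p_decay_lem}, announcing it as a rewriting of those two earlier lemmas in the stopping-time setting. Your verifications (that $\rho_w<(r_2-r_1)/(2\cv)$ places $(\rho_w,2\cv\rho_w]$ inside the interval where Lemma~\ref{p_stopping_lem} gives strictness, and that $\law>1$ gives the enclosure via $\law^{(2-p)/2}\le1$) are correct and are the real content of the reduction.

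The algebra for the second bound is wrong, though. From \eqref{p_intr_ineq},
\[
K\ge \Bigl(\tfrac{\|u\|_{L^\infty(\Om_T)}}{\rho_w}\Bigr)^{q-p}\sup_{Q_{4\rho_w}^{\law}(w)}a\,,
\]
dividing gives
\[
\sup a \le K\|u\|_{L^\infty(\Om_T)}^{p-q}\rho_w^{q-p}\,,
\]
not $K\|u\|_{L^\infty(\Om_T)}^{q-p}\rho_w^{p-q}$ as you wrote: the exponents on $\|u\|$ and $\rho_w$ are swapped. And the subsequent step $\rho_w^{p-q}\le\cc^{q-p}\law^{p-q}$ runs the wrong direction, since $p-q<0$ means that $\rho_w\le\cc\law^{-1}$ yields $\rho_w^{p-q}\ge(\cc\law^{-1})^{p-q}$, not $\le$. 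The corrected chain reads
\[
\sup a \le K\|u\|_{L^\infty(\Om_T)}^{p-q}\rho_w^{q-p}
\le K\cc^{q-p}\|u\|_{L^\infty(\Om_T)}^{p-q}\law^{p-q}
\le \cc^{q}\|u\|_{L^\infty(\Om_T)}^{p-q}\law^{p-q}\,,
\]
which coincides with Lemma~\ref{p_intr} but carries $\|u\|^{p-q}$ rather than the $\|u\|^{q-p}$ printed in Lemma~\ref{p_decay_lem}. The printed statement of Lemma~\ref{p_decay_lem} thus appears to carry a sign typo (compare Lemma~\ref{p_intr} and its proof), and your two sign errors reproduce it without actually deriving it.
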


On the other hand, we suppose \eqref{p_intr_ineq} fails.
\begin{lemma}\label{comp_a_lem}
Let $r\le r_1<r_2\le 2r$.
    Suppose $w\in \Psi(\La,r_1)$.
    Under the same assumptions in Lemma~\ref{p_stopping_lem}, if there holds
    \[
K< \left( \frac{\|u\|_{L^\infty(\Om_T)}}{\rho_w} \right)^{q-p}\sup_{z\in Q_{4\rho_w}^{\law}(w)}a(z)\,,
\]
then, we have
\[
\frac{a(w)}{2\ca}< a(z)< 2\ca a(w)\quad\text{
for all $z\in Q_{5\rho_w}(w)$}\,.
\]
\end{lemma}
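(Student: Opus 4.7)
The plan is to exploit the failure of the $p$-intrinsic condition to derive a pointwise lower bound on $a(w)$ of the order of the H\"older modulus $\ca\rho_w^\alpha$, and then use the H\"older continuity of $a$ to control its oscillation on the slightly enlarged parabolic cylinder $Q_{5\rho_w}(w)$. The crucial structural ingredient will be the admissible range $q-p\le\alpha$, which is exactly what allows $\rho_w^{q-p}$ to be upgraded to $\rho_w^\alpha$ (using $\rho_w\le r\le 1$), aligning the $(p,q)$-intrinsic scaling with the H\"older scale.

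First, I would unpack the hypothesis. Rearranging the complementary condition gives
\[
\sup_{z\in Q_{4\rho_w}^{\law}(w)}a(z)>K\left(\frac{\rho_w}{\|u\|_{L^\infty(\Om_T)}}\right)^{q-p},
\]
and since $K\ge 10\ca\|u\|_{L^\infty(\Om_T)}^{q-p}$ by \eqref{K_and_kappa}, there exists $z^{\ast}\in\overline{Q_{4\rho_w}^{\law}(w)}$ with $a(z^{\ast})>10\ca\rho_w^{q-p}\ge 10\ca\rho_w^{\alpha}$, where I used $q-p\le\alpha$ from \eqref{range_pq} and $\rho_w\le 1$.

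Second, I would transfer this lower bound from $z^{\ast}$ to $w$. Since $\law\ge 1$ (because $\La\ge\La_0\ge 1$ by \eqref{def_La} and \eqref{def_la}) and $p\ge 2$, the intrinsic time factor satisfies $\law^{(2-p)/2}\le 1$, so $Q_{4\rho_w}^{\law}(w)\subset Q_{4\rho_w}(w)$. Then the H\"older condition \eqref{def_holder}, applied with $\alpha\le 1$, yields
\[
|a(z^{\ast})-a(w)|\le \ca(4\rho_w)^{\alpha}\le 4\ca\rho_w^{\alpha},
\]
so $a(w)\ge a(z^{\ast})-4\ca\rho_w^{\alpha}\ge 6\ca\rho_w^{\alpha}$. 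Now for arbitrary $z\in Q_{5\rho_w}(w)$, \eqref{def_holder} gives $|a(z)-a(w)|\le\ca(5\rho_w)^{\alpha}\le 5\ca\rho_w^{\alpha}$, which combined with the previous lower bound on $a(w)$ produces
\[
\tfrac{1}{6}\,a(w)\le a(z)\le \tfrac{11}{6}\,a(w),
\]
and in particular the claimed two-sided comparison $\tfrac{a(w)}{2\ca}<a(z)<2\ca\, a(w)$.

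The main obstacle is purely bookkeeping: one must ensure the absolute constants line up so that the final sandwich is exactly $(2\ca)^{\pm 1}a(w)$. This is precisely why the definition \eqref{K_and_kappa} carries the factor $10$ in front of $\ca\|u\|_{L^\infty}^{q-p}$ — it provides enough margin so that the H\"older oscillation $5\ca\rho_w^{\alpha}$ on $Q_{5\rho_w}(w)$ is strictly dominated by $a(w)$ by a definite fraction, which after absorbing into the (without loss of generality, at least $1$) constant $\ca$ delivers the stated bounds. Apart from this constant-chasing, the argument is entirely driven by the two inputs $q-p\le\alpha$ and $\law^{(2-p)/2}\le 1$, which together reconcile the $p$-intrinsic geometry with the parabolic H\"older scale of $a$.
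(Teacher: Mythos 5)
Your proposal is, at its core, the same argument as the paper's, only written in the direct (contrapositive) form rather than by contradiction. The paper shows \emph{by contradiction} that $2\ca(5\rho_w)^\alpha < \sup_{Q_{5\rho_w}(w)} a$ — using exactly the ingredients you identify, namely that $K<(\|u\|_\infty/\rho_w)^{q-p}\sup a$ together with $\rho_w\le 1$ and $q\le p+\alpha$ would otherwise force $K<10\ca\|u\|_\infty^{q-p}$, contradicting $K=2+10\ca\|u\|_\infty^{q-p}$ — and then transfers this to the two-sided comparison via the H\"older estimate. You instead extract a witness $z^*$ with $a(z^*)>10\ca\rho_w^\alpha$, push it to $a(w)>6\ca\rho_w^\alpha$, and again close with the H\"older oscillation on $Q_{5\rho_w}(w)$. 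Same ingredients, same geometry, same role played by $q-p\le\alpha$ and $\law\ge 1$.

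The one place your write-up does not quite close is the final sandwich. You arrive at $\tfrac16 a(w)\le a(z)\le\tfrac{11}{6}a(w)$, and assert that this ``in particular'' gives $\tfrac{a(w)}{2\ca}<a(z)<2\ca\,a(w)$; for the lower bound this requires $\ca\ge 3$, but your ``without loss of generality, at least $1$'' normalization is not enough. The paper sidesteps this by first establishing $(5\rho_w)^\alpha<\inf_{Q_{5\rho_w}(w)}a$ and then feeding that back into the H\"older estimate, which produces $\sup<2\ca\inf$ directly and yields $(2\ca)^{\pm1}$ with no extra lower bound on $\ca$. You can fix your version either by reproducing that two-step bootstrap, or by observing that you are free to enlarge $\ca$ (the H\"older condition persists, and enlarging $\ca$ enlarges $K$, which only strengthens the hypothesis of this lemma while weakening the claimed conclusion), but either way the step needs to be said explicitly rather than absorbed.
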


\begin{proof}
    Note that it is enough to show
    \begin{align}\label{q_intr_claim}
        2\ca(5\rho_w)^\alpha<\sup_{z\in Q_{5\rho_w}(w)}a(z)\,.
    \end{align}
    Indeed, it follows from \eqref{def_holder} that
    \[
    2\ca(5\rho_w)^\alpha< \sup_{z\in Q_{5\rho_w}(w)}a(z)< \ca \left( \inf_{z\in Q_{5\rho_w}(w)}a(z) +(5\rho)^\alpha \right)
    \]
    and therefore, we have
    \[
    (5\rho_w)^\alpha<\inf_{z\in Q_{5\rho_w}(w)}a(z)\,.
    \]
    Again using the hypothesis on $a$, we have
    \[
    \sup_{z\in Q_{5\rho_w}(w)}a(z)\le \ca \left( \inf_{z\in Q_{5\rho_w}(w)}a(z)+(5\rho_w)^\alpha \right) < 2\ca\inf_{z\in Q_{5\rho_w}(w)}a(z)\,.
    \]
    The conclusion follows from the above inequality. We prove \eqref{q_intr_claim} by making contradiction. Suppose contrary
    \[
    2\ca(5\rho_w)^\alpha\ge\sup_{z\in Q_{5\rho_w}(w)}a(z)\,.
    \]
    Then since $Q_{4\rho_w}^{\law}(w)\subset Q_{5\rho_w}(w)$ holds, combining it with the assumption in this lemma, we obtain
    \[
    K<\left( \frac{\|u\|_{L^\infty(\Om_T)}}{\rho_w} \right)^{q-p}2\ca (5\rho_w)^\alpha\le 10\ca\|u\|_{L^\infty(\Om_T)}^{q-p}\rho_w^{p+\alpha-q}\,.
    \]
    Recalling $\rho_w\le r_2-r_1\le r\le 1$ and $q\leq p+\alpha$, the above inequality is a contradiction by the construction of $K$ in \eqref{K_and_kappa},
\end{proof}

We now prove the stopping time argument with the $(p,q)$-intrinsic cylinder.
\begin{lemma}
Let $r\le r_1<r_2\le 2r$.
    Suppose $w\in \Psi(\La,r_1)$.
    Under the same assumptions in Lemma~\ref{comp_a_lem}, there exists stopping time $\varsigma_{ w}\in(0,\rho_w)$ such that
\[
	\fiint_{G_{\varsigma_{ w}}^{\law}( w)}  ( H(z,|\na u|) +H(z,|F|)   )  \,dz= \La
\]
and 
for all $s\in (\varsigma_w,r_2-r_1)$ it holds
\[
\fiint_{G_s^{\law}( w)}  ( H(z,|\na u|) +H(z,|F|)   )  \,dz<\La\,.
\]
\end{lemma}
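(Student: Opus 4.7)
The plan is to parallel the stopping-time argument of Lemma~\ref{p_stopping_lem}, now with the $(p,q)$-intrinsic cylinders $G_s^{\law}(w)$ in place of $Q_s^{\law}(w)$. Setting $\phi(s):=\fiint_{G_s^{\law}(w)}(H(z,|\na u|)+H(z,|F|))\,dz$, I aim to verify two facts. First, by the Lebesgue point theorem applied at $w\in\Psi(\La,r_1)$, one has $\lim_{s\to 0^+}\phi(s)=H(w,|\na u(w)|)+H(w,|F(w)|)>\La$. Second, there is an interval $[s_*,r_2-r_1)$ with $s_*<\rho_w$ on which $\phi(s)<\La$. Once these two ingredients are in place, continuity of $\phi$ in $s$ and the intermediate value theorem produce the desired stopping time $\varsigma_w\in(0,\rho_w)$ with $\phi(\varsigma_w)=\La$ and $\phi(s)<\La$ on $(\varsigma_w,r_2-r_1)$.

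For the upper-end estimate, I would apply the Caccioppoli inequality (Lemma~\ref{caccio_lem}) to the pair $G_s^{\law}(w)\subset G_{2s}^{\law}(w)$ and bound the resulting phase, $L^2$, and forcing terms following the strategy in the proof of Lemma~\ref{p_stopping_lem}. The crude pointwise control $|u-u_{G_{2s}^{\law}(w)}|\le 2\|u\|_{L^\infty(\Om_T)}$, together with the definition~\eqref{def_la} of $\la_0$ and the comparability of $a$ from Lemma~\ref{comp_a_lem}, reduces each of the three terms to a power of $r/s$ times $\la_0^p$ or $\la_0^2$. The lower bound $\law\ge 4\ccc(8\kappa r/(r_2-r_1))^{q/p+(n+2)/2}\la_0$ furnished by Lemma~\ref{La_breaking_lem} is then used to absorb these powers of $r/s\le 8\kappa r/(r_2-r_1)$ into $\law$, yielding a bound of each contribution by $\La/4$.

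The main obstacle in this plan is the $L^2$ term: unlike the $p$-intrinsic case where its Caccioppoli coefficient is $\law^{p-2}$, in the $(p,q)$-intrinsic geometry it becomes $\La^2/\law^2$, which can be much larger. Absorbing $\La^2/\law^2\cdot\|u\|_{L^\infty(\Om_T)}^2/s^2$ by $\La/4$ is precisely what forces the exponent $q/p+(n+2)/2$ in the lower bound on $\law$, and it is also what pushes the admissible range of $s$ in the upper-end estimate below $\rho_w$, thereby producing the strict separation $\varsigma_w<\rho_w$.
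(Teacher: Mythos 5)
Your plan re-derives the upper bound on $\phi(s)=\fiint_{G_s^{\law}(w)}(H(z,|\na u|)+H(z,|F|))\,dz$ from scratch via the Caccioppoli inequality, mirroring Lemma~\ref{p_stopping_lem}. This is a genuinely different route from the paper's, and it runs into a gap. In the $p$-intrinsic stopping time argument, the Caccioppoli-based absorption works only for $s\ge(r_2-r_1)/(2\cv)$, because each term contains a factor like $\|u\|_{L^\infty}/s$, which must be controlled from above by a fixed multiple of $r/(r_2-r_1)$; as $s\to0$ those factors blow up. The same restriction would apply to your variant, so it would only establish $\phi(s)<\La$ for $s\in[(r_2-r_1)/(2\cv),r_2-r_1)$, which yields $\varsigma_w<(r_2-r_1)/(2\cv)$ but \emph{not} the asserted $\varsigma_w<\rho_w$. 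Since $\rho_w$ is itself strictly below $(r_2-r_1)/(2\cv)$ and can be arbitrarily small, the stronger conclusion $\varsigma_w<\rho_w$ does not follow — yet it is precisely this inequality that guarantees $G_{4\varsigma_w}^{\law}(w)\subset Q_{5\rho_w}(w)$, which is where Lemma~\ref{comp_a_lem} supplies the comparability of $a$ needed for condition \ref{q1}.

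The idea you are missing is the one the paper uses: since $a(w)>0$, the $(p,q)$-cylinder is contained in the $p$-cylinder, $G_s^{\law}(w)\subsetneq Q_s^{\law}(w)$, with the exact measure ratio $|Q_s^{\law}|/|G_s^{\law}|=\La/\law^p$. Consequently, for every $s\in[\rho_w,r_2-r_1)$,
\[
\fiint_{G_s^{\law}(w)}(H(z,|\na u|)+H(z,|F|))\,dz
\le \frac{|Q_s^{\law}|}{|G_s^{\law}|}\fiint_{Q_s^{\law}(w)}(H(z,|\na u|)+H(z,|F|))\,dz
\le \frac{\La}{\law^p}\,\law^p=\La\,,
\]
using the bound from Lemma~\ref{p_stopping_lem} directly. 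This transfers the $p$-intrinsic estimate to the $(p,q)$-intrinsic cylinder for all $s$ down to $\rho_w$, not just down to $(r_2-r_1)/(2\cv)$, and the stopping time $\varsigma_w<\rho_w$ then follows by the Lebesgue point argument as you describe. As a smaller point, the $(p,q)$-intrinsic Caccioppoli coefficient on the $L^2$ term is $\La/\law^2$, not $\La^2/\law^2$ (compare the statement of Lemma~\ref{q_sup_lem}); with the correct factor the absorption you envision does work, but it still does not fix the range-of-$s$ issue above.
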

\begin{proof}
    By the previous lemma, we have $a(w)>0$ and thus $G_s^{\law}(w)\Subset Q_s^{\law}(w)$ for any $s>0$.
    Now for $s\in [\rho_w,r_2-r_1)$, we have from Lemma~\ref{p_stopping_lem} that
    \begin{align*}
        \begin{split}
            &\fiint_{G_s^{\law}(w)}  ( H(z,|\na u|) +H(z,|F|)   ) \,dz\\
            &< \frac{|Q_{s}^{\law}(w)|}{|G_{s}^{\law}(w)|}\fiint_{Q_{s}^{\law}(w)} ( H(z,|\na u|) +H(z,|F|)   ) \,dz\\
            &\le \frac{\La}{\law^p}\law^p=\La\,.
        \end{split}
    \end{align*}
    Again by the Lebesgue point theorem and continuity of integral with respect to the radius, we find the stopping time $\varsigma_w\in (0,\rho_w)$ satisfying the claim of this lemma.
\end{proof}

We have proved that if \eqref{p_intr_ineq} fails, then \ref{q1}-\ref{q2} hold.

\subsection{Vitali type covering argument} Recall the stopping time parameter $\La$ for $(p,q)$-scenario, cf.~\ref{q2}.
Since intrinsic geometries may vary from point to point in $\Psi(\La,r_1)$, standard Vitali covering lemma cannot be directly applied. In this subsection, we modify the original proof to show covering lemma in our setting.

For each $ w\in \Psi(\La,r_1)$, where $r\le r_1<r_2\le 2r$ and 
\[
\La>(4\ccc)^{q}\left(\frac{8\cv r}{r_2-r_1}\right)^{ \frac{q^2}{p}+\frac{q(n+2)}{2}}\La_0\,.
\] we denote
\[
\mQ ( w)= 
\begin{cases}
Q_{2\rho_{ w}}^{\law}( w)&\text{if $p$-intrinsic case\,,}\\
G_{2\varsigma_{ w}}^{\law}( w)&\text{if $(p,q)$-intrinsic case\,.}
\end{cases}
\]
For the simplicity, we also write
\[
l_{ w}=\begin{cases}
		2\rho_{w}&\text{if $p$-intrinsic case\,,}\\
		2\varsigma_{w}&\text{if $(p,q)$-intrinsic case\,.}
	\end{cases}
\]
We will prove a Vitali type covering lemma from the following set of cylinders
\[
	\mathcal{F}=\left\{\mQ ( w):  w\in \Psi(\La,r_1)\right\}\,.
\]
In order to do this, we need the comparability of the scaling factors $\la_{(\cdot)}$ in the neighborhood.
\begin{lemma}\label{la_comp_lem}
    Suppose $U(w), U(v)\in \mathcal{F}$ with $U(w)\cap U(v)\ne \emptyset$ and $l_w\le 2l_v$. Then for $\cc\ge 2$ defined in Lemma~\ref{p_decay_lem}, we have
    \[
    \lav\le (2(1+\ca+10\cc\ca))^\frac{1}{p}\law\,.
    \]
    Moreover, if $U(v)$ is $(p,q)$-intrinsic, then we also have
    \[
    \law\le (2(1+\ca+10\cc\ca))^\frac{1}{p}\lav\,.
    \]
\end{lemma}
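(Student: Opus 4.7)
The plan is to transform the defining identity $H(w,\law)=\La=H(v,\lav)$ into a comparison of $\law$ and $\lav$, with the $\alpha$-Hölder continuity of $a$ controlling how much the two $H$-profiles can differ at the two centers. First I show that $w$ lies in a parabolic ball of radius comparable to $l_v$ around $v$. The spatial displacement satisfies $|x_w-x_v|\le l_w+l_v\le 3l_v$ from $U(w)\cap U(v)\neq\emptyset$ and $l_w\le 2l_v$. For the time component I note that $\law,\lav\ge 1$ (since $\La>1+\|a\|_{L^\infty(\Om_T)}$ and $\La\ge\lambda^p$) and $p\ge 2$, which forces the time half-extent of every cylinder in $\mathcal F$ to be bounded by its spatial radius: in the $p$-intrinsic case $\lambda^{(2-p)/2}\le 1$, and in the $(p,q)$-intrinsic case $\lambda/\La\le 1$. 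Hence $|t_w-t_v|^{1/2}\le\sqrt 5\,l_v$, and \eqref{def_holder} yields $|a(w)-a(v)|\le 3\ca l_v^\alpha$.

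Next I rewrite the identity as
\[
\lav^p-\law^p=a(w)\law^q-a(v)\lav^q
\]
and may assume $\lav\ge\law$, as otherwise the first inequality is trivial. I then analyze cases according to the intrinsic type of $U(v)$. When $U(v)$ is $p$-intrinsic and $a(v)\ge 3\ca l_v^\alpha$, the Hölder bound gives $a(w)\le 2a(v)$, so the identity becomes $\lav^p-\law^p\le a(v)(2\law^q-\lav^q)$; nonnegativity of the left-hand side forces $\lav\le 2^{1/q}\law$. When $U(v)$ is $p$-intrinsic and $a(v)<3\ca l_v^\alpha$, then $a(w)<6\ca l_v^\alpha$ and hence $\lav^p\le\law^p+6\ca l_v^\alpha\law^q$; Lemma~\ref{p_decay_lem} supplies $l_v\le 2\cc/\lav\le 2\cc/\law$, and the sharp range $q\le p+\alpha$ from~\eqref{range_pq} combined with $\law\ge 1$ and $\alpha\le 1$ gives $l_v^\alpha\law^{q-p}\le(2\cc)^\alpha\le 2\cc$, yielding $\lav^p\le(1+12\ca\cc)\law^p$. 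When $U(v)$ is $(p,q)$-intrinsic, the proof of Lemma~\ref{comp_a_lem} applied at $v$ already shows $a(v)>(5\rho_v)^\alpha>l_v^\alpha$ (using $\varsigma_v<\rho_v$, so $l_v=2\varsigma_v<2\rho_v<5\rho_v$), whence the Hölder bound improves to $a(w)\le(1+3\ca)a(v)$; the identity becomes $\lav^p-\law^p\le a(v)((1+3\ca)\law^q-\lav^q)$, and either $\lav\le\law$ or $\lav\le(1+3\ca)^{1/q}\law$, producing $\lav^p\le(1+3\ca)\law^p$.

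In every case $\lav^p\le 2(1+\ca+10\cc\ca)\law^p$, since each of $2^{p/q}$, $1+12\ca\cc$, and $1+3\ca$ is dominated by $2(1+\ca+10\cc\ca)$ when $\cc\ge 2$. For the moreover statement, when $U(v)$ is $(p,q)$-intrinsic the last argument above is symmetric in $w$ and $v$ (the lower bound $a(v)>l_v^\alpha$ holds at both centers after exchanging roles, using also $l_w\le 2l_v\le 4\rho_v$), so rerunning it with the roles exchanged yields the reverse inequality $\law^p\le 2(1+\ca+10\cc\ca)\lav^p$.

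The main obstacle is the $p$-intrinsic subcase with small $a(v)$: the sharp bound $q\le p+\alpha$ is invoked precisely there, so that the $l_v^\alpha$ factor produced by Hölder continuity exactly cancels the $\law^{q-p}$ coming from the $q$-growth of $H$; any slack $q-p-\alpha>0$ would leave an unabsorbable positive power of $\law$, and the comparability of scaling factors would break down.
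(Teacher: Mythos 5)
Your direct rearrangement of the identity $H(w,\law)=\La=H(v,\lav)$ is a genuinely different route from the paper, which argues by contradiction starting from the assumption $\lav>(2(1+\ca+10\cc\ca))^{1/p}\law$. For the first inequality your argument is correct: you verify $|a(w)-a(v)|\le 3\ca l_v^\alpha$ from the parabolic geometry, split into three cases ($p$-intrinsic with $a(v)$ large or small relative to $l_v^\alpha$, and $(p,q)$-intrinsic), and in each case close the estimate cleanly. The $p$-intrinsic small-$a(v)$ subcase, where Lemma~\ref{p_decay_lem} supplies $l_v\le 2\cc/\lav$ and the sharp range $q\le p+\alpha$ cancels $l_v^\alpha\law^{q-p}$, is exactly where the paper invokes Lemma~\ref{p_decay_lem} inside its case~(i); your more transparent split of case~(i) into large and small $a(v)$ is a readable alternative, and the constants you produce ($2^{p/q}$, $1+12\ca\cc$, $1+3\ca$) are all dominated by $2(1+\ca+10\cc\ca)$.

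The ``moreover'' direction, however, has a real gap. You write that ``the lower bound $a(v)>l_v^\alpha$ holds at both centers after exchanging roles,'' but that is precisely what is not available: $a(v)>(5\rho_v)^\alpha$ comes from the proof of Lemma~\ref{comp_a_lem} applied at $v$ because $U(v)$ is $(p,q)$-intrinsic, and nothing forces the analogous lower bound $a(w)\gtrsim l_v^\alpha$ at the other center (one only has $a(w)\ge a(v)-3\ca l_v^\alpha>(1-3\ca)\,l_v^\alpha$, which can be negative). Rerunning your $(p,q)$-intrinsic argument with roles exchanged needs $a(v)\le C\,a(w)$ with a uniform $C$; your Hölder bound plus $a(v)>l_v^\alpha$ only gives the wrong-direction estimate $a(w)\le(1+3\ca)a(v)$. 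The paper avoids this by using the full conclusion of Lemma~\ref{comp_a_lem} at $v$, namely the bilateral comparability $\tfrac{a(v)}{2\ca}<a(w)<2\ca a(v)$, whose lower bound $a(w)>a(v)/(2\ca)$ is exactly the missing ingredient and yields $\law^p\le(2\ca)^{p/q}\lav^p\le 2\ca\,\lav^p$. To fix your proof, replace the one-sided extraction $a(v)>l_v^\alpha$ by the two-sided conclusion of Lemma~\ref{comp_a_lem} (after checking that $w$ lies in the set where that comparability holds), and the reverse inequality follows with the stated constant.
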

\begin{proof}
By the assumption, we observe that $Q_{l_w}(w)\cap Q_{l_v}(v)\ne \emptyset$. By the standard Vitali covering lemma, there holds
\begin{align}\label{stndr_vitali}
    Q_{l_{w}}(w) \subset Q_{5l_{v}}(v)\,.
\end{align}
We will prove the first statement of this lemma by contradiction. Suppose we have
\begin{align}\label{la_comp_contr}
     \lav > (2(1+\ca+10\cc\ca))^\frac{1}{p}\law\,.
\end{align}
 Now, we divide cases:
 \begin{enumerate}[label=(\roman*)]
     \item\label{i} $\mQ (v)=Q_{l_v}^{\la_v}(v)$,
     \item\label{ii} $\mQ (v)=G_{l_v}^{\la_v}(v)$.
 \end{enumerate}

 \textit{Case} \ref{i}: Since we have $\La=H(w,\law)$ and \eqref{stndr_vitali} holds, we apply \eqref{def_holder} to have
 \[
0< \La\le\law^p+\ca( a(v)+(5l_v)^\alpha)\law^q\,.
 \]
 By using \eqref{la_comp_contr}, we have
 \[
 \La \le\frac{1}{2(1+\ca+10\cc\ca)}(\lav^p+\ca( a(v)+(5l_v)^\alpha)\lav^q )\,.
 \]
  As $U(v)$ is a $p$-intrinsic cylinder with $l_v=2\rho_v$, it follows from Lemma~\ref{p_decay_lem} that
 \[
 \La \le\frac{1}{2(1+\ca+10\cc\ca)}((1+10\ca\cc)\lav^p+\ca a(v)\lav^q )\le \frac{1}{2}\La\,,
 \]
where we used $\lav^{q-\alpha}\le \lav^p$, since $\lav>1$ and $q\leq p+\alpha$. Hence, it is a contradiction.

 \textit{Case} \ref{ii}: In this case, we have $l_v=\varsigma_v<\rho_v$ and thus by Lemma~\ref{comp_a_lem} and \eqref{stndr_vitali}, there holds
 \begin{align}\label{comp_a_1}
     \frac{a(v)}{2\ca}<a(w)<2\ca a(v)\,.
 \end{align}
Then from \eqref{la_comp_contr} and the above display, we get
\[
\La=\law^p+a(w)\law^q\le (1+2\ca) (\law^p+a(v)\law^q)\le \frac{1+2\ca}{2(1+\ca+10\cc\ca)}(\lav^p+a(v)\lav^q)<\La\,.
\]
Again, we arrived at the contradiction and the conclusion of the first claim is true. 

To prove the second statement, we assume 
\[
\law > (2(1+\ca+10\cc\ca))^\frac{1}{p}\lav.
\]
Then along with \eqref{comp_a_1}, we have
\[
\La=\lav^p+a(v)\lav^q\le (1+2\ca)(\lav^p+a(w)\lav^q)\le \frac{1+2\ca}{2(1+\ca+10\cc\ca)}(\law^p+a(w)\law^q)<\La\,.
\]
Therefore, the second statement also holds.
\end{proof}

\begin{lemma}\label{vitali_lem}
    Let $\mathcal{F}$ be defined as above and $\kappa$ is in \eqref{K_and_kappa}, Then there exists a pairwise disjoint countable subcollection $\mathcal{G}$ of $\mathcal{F}$ such that for any $U(w)\in \mathcal{F}$, there exists $U(v)\in \mathcal{G}$ with
    \[
    U(w)\subset \kappa U(v)\,.
    \]
\end{lemma}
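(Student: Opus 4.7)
The plan is a Vitali-type selection adapted to the varying intrinsic geometry: the time scaling of each $U(w)$ depends on $\lambda_w$ and on whether the case is $p$- or $(p,q)$-intrinsic, so the classical comparability of sizes of intersecting cubes must be replaced by the quantitative comparability of scaling factors supplied by Lemma~\ref{la_comp_lem}. I will stratify $\mathcal{F}$ by the spatial radius $l_w$ into dyadic generations, greedily extract a maximal disjoint subfamily from each, and verify the containment $U(w) \subset \kappa U(v)$ case-by-case.

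\textbf{Construction.} Every $U(w) \in \mathcal{F}$ satisfies $l_w \leq r_2 - r_1 < \infty$, so I set $L := \sup_{U(w) \in \mathcal{F}} l_w$ and, for each integer $j \geq 0$, define $\mathcal{F}_j := \{U(w) \in \mathcal{F} : L\cdot 2^{-(j+1)} < l_w \leq L\cdot 2^{-j}\}$. Let $\mathcal{G}_0$ be a maximal disjoint subfamily of $\mathcal{F}_0$, produced by Zorn's lemma (it is countable because disjoint cylinders in the bounded region $\Omega_T$ form a countable family). Inductively, let $\mathcal{G}_j$ be a maximal disjoint subfamily of those $U(w) \in \mathcal{F}_j$ disjoint from every element of $\bigcup_{i < j} \mathcal{G}_i$, and set $\mathcal{G} := \bigcup_{j \geq 0} \mathcal{G}_j$.

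\textbf{Containment.} Given $U(w) \in \mathcal{F}_j$, either $U(w) \in \mathcal{G}_j$ (take $U(v) := U(w)$), or by maximality there exists $U(v) \in \bigcup_{i \leq j} \mathcal{G}_i$ with $U(w) \cap U(v) \neq \emptyset$, necessarily satisfying $l_w \leq 2 l_v$. Then $|w - v| \leq l_w + l_v \leq 3 l_v$, whence $B_{l_w}(w) \subset B_{5 l_v}(v) \subset B_{\kappa l_v}(v)$ as $\kappa \geq 5$. For the time direction, Lemma~\ref{la_comp_lem} yields $\lambda_v \leq C \lambda_w$ with $C := (2(1 + c_a + 10 \mathbf{c} c_a))^{1/p}$, and additionally $\lambda_w \leq C \lambda_v$ when $U(v)$ is $(p,q)$-intrinsic. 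Splitting into the four sub-cases according to the type ($p$- or $(p,q)$-intrinsic) of each cylinder, I bound $|t_w - t_v|$ plus the time half-length of $U(w)$ by the time half-length of $\kappa U(v)$, using $l_w \leq 2 l_v$ and $p \geq 2$; the form of $\kappa$ in \eqref{K_and_kappa} absorbs the resulting numerical constants.

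\textbf{Main obstacle.} The most delicate sub-case is $U(v)$ $(p,q)$-intrinsic and $U(w)$ $p$-intrinsic, because the $G$-cylinder has a strictly narrower time extent than the $Q$-cylinder of the same radius and scaling. Reducing this case to a $\lambda_v$-independent $\kappa$ requires controlling the ratio $H(v,\lambda_v)/\lambda_v^p = 1 + a(v)\lambda_v^{q-p}$. I extract the bound $a(v)\lambda_v^{q-p} \lesssim_{\data} 1$ by combining the $p$-intrinsic inequality \eqref{p_intr_ineq} at $w$, Lemma~\ref{p_decay_lem} (giving $\rho_w \leq \mathbf{c}\lambda_w^{-1}$), Lemma~\ref{la_comp_lem} Part~2 ($\lambda_w \approx \lambda_v$), and the H\"older continuity of $a$ (to transfer the upper bound on $a$ from a neighborhood of $w$ to $v$). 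This is precisely the role of the mixed term $\mathbf{c}^q \|u\|_{L^\infty(\Omega_T)}^{q-p}$ in the definition of $\kappa$ in \eqref{K_and_kappa}.
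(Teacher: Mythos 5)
Your proposal follows essentially the same route as the paper: the same dyadic stratification by $l_w$, the same greedy extraction of maximal disjoint subfamilies, the same reduction of spatial containment to the classical $5l_v$ bound, and the same four-way case analysis on the intrinsic types using Lemma~\ref{la_comp_lem}, with Case~(d) ($U(v)$ being $(p,q)$-intrinsic and $U(w)$ $p$-intrinsic) correctly singled out as the delicate one requiring $a(w)\lambda_w^{q-p}\lesssim_{\data}1$ via Lemma~\ref{p_decay_lem} and the two-sided comparability $\lambda_w\approx\lambda_v$ from Lemma~\ref{la_comp_lem}. One small imprecision: in Case~(d) the paper does not need to transfer an $a$-bound from $w$ to $v$ by H\"older continuity — it uses $a(w)\lambda_w^{q-p}\le\mathbf{c}^q\|u\|_{L^\infty(\Om_T)}^{q-p}$ directly (since $\Lambda=H(w,\lambda_w)$) and then bounds $\lambda_w^2\lesssim\lambda_v^2$ — but this does not affect the soundness of your plan.
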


\begin{proof}
    Since $l_w\le 2\rho_w\le 2r$, for each $j\in\mathbb N$, we define
    \[
	\mathcal{F}_j=\left\{\mQ ( w)\in \mathcal{F}: \frac{2r}{2^j}<l_{ w}\le\frac{2r}{2^{j-1}} \right\}\,.
    \]
  Next, we consider subcollections $\mathcal{G}_j\subset \mathcal{F}_j$ for each $j\in\mathbb N$ recursively. Let $\mathcal{G}_1$ be a maximal disjoint collection of cylinders in $\mathcal{F}_1$. With chosen $\mathcal{G}_1,...,\mathcal{G}_{k}$, we take
\[
	\mathcal{G}_{k+1}=\Bigl\{ \mQ ( w)\in \mathcal{F}_k: \mQ ( w)\cap \mQ ( v)=\emptyset\quad\text{for all}\quad\mQ ( v)\in \bigcup_{j=1}^{k}\mathcal{G}_j\Bigr\}\,.    
\]
Note that if $U(w)\in \mathcal{F}_j$ for some $j$, then 
\[
|U(w)|\ge|G_{l_w}^{\law}(w)|=2|B_1|\frac{\law^2}{\La}l_w^{n+2}> 2^{-(j-1)(n+2)}\La^{-1}|B_1|r^{n+2}\,.
\]
Therefore, since $U(w)\subset Q_{2r}(z_0)$ holds for all $w$, the cardinality of $\mathcal{G}_j$ are finite. We now claim that
\[
\mathcal{G}=\bigcup_{j=1}^\infty\mathcal{G}_j
\]
is the desired subcollection. As it is constructed to be pairwise disjoint, it remains to show the covering property. 

For any $\mQ ( w)\in \mathcal{F}$, there exists $j \in \mathbb{N}$ such that $U ( w)\in \mathcal{F}_j$. Moreover, by the construction of $\mathcal{G}_j$, there exists a cylinder 
$U ( v)\in\cup_{i=1}^j \mathcal{G}_i$ such that $U(w)\cap U(v)\ne \emptyset$. Besides, we have 
\begin{align}\label{radi_comp}
    l_w\le 2l_v\,.
\end{align}
As in the proof in the previous lemma, we again have
\[
Q_{l_w}(w)\subset Q_{5l_v}(v)\,.
\]
Thus, denoting $w=(x,t)$ and $v=(y,s)$ for $x,y\in \mathbb{R}^n$ and $t,s\in \mathbb{R}$, we have
\[
B_{l_w}(x)\subset B_{5l_v}(y)\subset B_{\kappa l_v}(y)\,.
\]
Therefore, it is enough to prove the inclusion of the time interval part. Since the time interval of $U(w)$ and the time interval of $U(v)$ intersect, it is enough to show that 
\[
|\text{time interval of $U(w)$}|\le \frac{\kappa^2}{2}|\text{time interval of $U(v)$}|\,.
\]
We prove it by dividing cases:
\begin{enumerate}[label=(\alph*)]
\item\label{a} $\mQ (v)=Q_{l_v}^{\la_v}(v)$ and $\mQ (w)=Q_{l_w}^{\la_w}(w)$,
\item\label{b} $\mQ (v)=Q_{l_v}^{\la_v}(v)$ and $\mQ (w)=G_{l_w}^{\la_w}(w)$,
\item\label{c} $\mQ (v)=G_{l_v}^{\la_v}(v)$ and $ \mQ (w)=G_{l_w}^{\la_w}(w)$,
\item\label{d} $\mQ (v)=G_{l_v}^{\la_v}(v)$ and $ \mQ (w)=Q^{\la_w}_{l_w}(w)$.
\end{enumerate} 
\textit{Case} \ref{a}: Using Lemma~\ref{la_comp_lem} and \eqref{radi_comp}, we have
\begin{align*}
    \begin{split}
        2|I_{l_w}^{\law}(t)|
    &=2^2\law^{2-p}l_w^2\\
    &\le 2^2(2(1+\ca+10\cc\ca))^\frac{p-2}{p}\lav^{2-p}(2l_v)^2\\
    &= 2^4(2(1+\ca+10\cc\ca))^\frac{p-2}{p}|I_{l_v}^{\lav}(s)|\,.
    \end{split}
\end{align*}
Since we have set $2^4(2(1+\ca+10\ca\cc))^\frac{p-2}{p}\le \kappa^2$, we get
\[
2|I_{l_w}^{\law}(t)|\le \kappa^2|I_{l_v}^{\lav}(s)|=|I_{\kappa l_v}^{\lav}(s)|\,.
\]

\textit{Case} \ref{b}: As in the previous case, we again have
\[
2|J_{l_w}^{\law}(t)|\le 2|I_{l_w}^{\law}(t)|\le |I_{\kappa l_v}^{\lav}(s)|\,.
\]

\textit{Case} \ref{c}: We apply Lemma~\ref{la_comp_lem} and \eqref{radi_comp} to get
\begin{align*}
    \begin{split}
        2|J_{l_w}^{\law}(t)|
    &=2^2\frac{\la_w^2}{\La}l_w^2\\
    &\le 2^2(2(1+\ca+10\cc\ca))^\frac{2}{p}\frac{\la_v^2}{\La}(2l_v)^2\\
    &=2^4(2(1+\ca+10\cc\ca))^\frac{2}{p}|J_{l_v}^{\lav}(s)|\\
    &\le |J_{\kappa l_v}^{\lav}(s)|\,.
    \end{split}
\end{align*}

\textit{Case} \ref{d}: To begin with, note that Lemma~\ref{p_decay_lem} gives
\begin{align*}
    \begin{split}
        \law^{2-p}
        &=\frac{(1+\cc^q\|u\|_{L^\infty(\Om_T)}^{q-p}\law^2}{(1+\cc^q\|u\|_{L^\infty(\Om_T)}^{q-p})\law^p}\\
        &\le (1+\cc^q\|u\|_{L^\infty(\Om_T)}^{q-p})\frac{\la_w^2}{\law^p+a(w)\law^q}\\
        &\le (1+\cc^q\|u\|_{L^\infty(\Om_T)}^{q-p})\frac{\la_w^2}{\La}\,.
    \end{split}
\end{align*}
Therefore, applying Lemma~\ref{la_comp_lem} and \eqref{radi_comp}, we obtain
\begin{align*}
    \begin{split}
        2|I_{l_w}^{\law}(t)|
        &=2^2\law^{2-p}l_w^2\\
        &\le 2^2 (2(1+\cc^q\|u\|_{L^\infty(\Om_T)}^{q-p}+\ca +10\cc\ca ))^{1+\frac{2}{p}}\frac{\lav^2}{\La}(2l_v)^2\\
        &= 2^4 (2(1+\cc^q\|u\|_{L^\infty(\Om_T)}^{q-p}+\ca +10\cc\ca ))^{1+\frac{2}{p}}|J_{ l_v}^{\lav}(s)|\\
        &\le|J_{\kappa l_v}^{\lav}(s)|\,.
    \end{split}
\end{align*}
This completes the proof.
\end{proof}

\subsection{Proof of Theorem~\ref{main_theorem}}
Recall we have set $r\le r_1<r_2\le 2r$ with $r\in(0,1)$.
For selected subcollection $\mathcal{G}$ in Lemma~\ref{vitali_lem}, we simply write
\[
 \mathcal{G}=\{ \mQ_i \}_{1\le i \le \infty}\,,
\]
where $\mQ_i=\mQ(w_i)$ for some $w_i\in \Psi(\La,r_1)$. Depending on the intrinsic geometry of $\mQ_i\in \mathcal{G}$, it follows by Proposition~\ref{p_est_vitali} and Proposition~\ref{q_est_vitali} that for any $i\in \mathbb{N}$
\begin{align*}
    \begin{split}
        \iint_{\cv\mQ _{i}}H(z,|\na u|)\,dz
		&\le c\La^{1-\theta}\iint_{\mQ _i\cap \Psi(c^{-1}\La)}H(z,|\na u|)^\theta\,dz\\
        &\qquad + c\iint_{U_i\cap \Theta(c^{-1} \la) } H(z,|F|)\,dz\,,
    \end{split}
\end{align*}
where $c=c(\data)>1$ is a constant and fixed $\theta\in (0,1)$.
Employing disjointedness and covering property in Lemma~\ref{vitali_lem}, we get
\begin{align*}
    \begin{split}
        \iint_{\Psi(\La,r_1)}H(z,|\na u|)\,dz
&\le\sum_{i=1}^\infty\iint_{\cv\mQ _{i}}H(z,|\na u|)\,dz\\
&\le c\La^{1-\theta}\sum_{i=1}^\infty\iint_{\mQ _i\cap \Psi(c^{-1}\La)}H(z,|\na u|)^\theta\,dz\\
&\qquad +c \sum_{i=1}^\infty\iint_{\mQ _i\cap \Theta(c^{-1}\La)}H(z,|F|)\,dz\\
&\le c\La^{1-\theta}\iint_{\Psi(c^{-1}\La,r_2)}H(z,|\na u|)^\theta\,dz\\
&\qquad+ c\iint_{\Theta(c^{-1}\La,r_2)}H(z,|F|)\,dz\,.
    \end{split}
\end{align*}
On the other hand, since we have
\[
\iint_{\Psi(c^{-1}\La,r_1)\setminus \Psi(\La,r_1)}H(z,|\na u|)\,dz
\le\La^{1-\theta}\iint_{\Psi(c^{-1}\La,r_2)}H(z,|\na u|)^{\theta}\,dz\,,
\]
we deduce that
\begin{align}\label{covering_est_1}
\begin{split}
    \iint_{\Psi(c^{-1}\La,r_1)}H(z,|\na u|)\,dz
    &\le c\La^{1-\theta}\iint_{\Psi(c^{-1}\La,r_2)}H(z,|\na u|)^\theta\,dz\\
    &\qquad +c \iint_{\Theta(c^{-1}\La,r_2)}H(z,|F|)\,dz\,,
\end{split}
\end{align}
where $c=c(data)>1$ is a constant.
We now take $k\in\mathbb N$ and consider
\[		
H(z,|\na u|)_k=\min\{H(z,|\na u|),k\}
\]
and
\[
\Psi_k(\La,\rho)=\{z\in Q_{\rho}(z_0):H(z,|\na u(z)|)_k>\La\}\,.
\]
It is easy to see that if $\La>k$, then $\Psi_k(\La,\rho)=\emptyset$, and if $\La\le k$, then $\Psi_k(\La,\rho)=\Psi(\La,\rho)$. Therefore, along with these notations, \eqref{covering_est_1} becomes
\begin{align*}
	\begin{split}
		\iint_{\Psi_k(c^{-1}\La,r_1)}\left(H(z,|\na u|)_k\right)^{1-\theta}H(z,|\na u|)^\theta\,dz
		&\le c\La^{1-\theta}\iint_{\Psi_k(c^{-1}\La,r_2)}H(z,|\na u|)^\theta\,dz\\
        &\qquad +c \iint_{\Theta(c^{-1}\La,r_2)}H(z,|F|)\,dz
	\end{split}
\end{align*}
for any 
\[
\La>(4\ccc)^{q}\left(\frac{8\cv r}{r_2-r_1}\right)^{\frac{q^2}{p}+\frac{q(n+2)}{2} }\La_0\,.
\]
Denoting
\[
	\La_1= c^{-1}(4\ccc)^q\left(\frac{8\cv r}{r_2-r_1}\right)^{\frac{q^2}{p}+\frac{q(n+2)}{2}}\La_0\,,
\]
for any $\La>\La_1$ we obtain
\begin{align*}
	\begin{split}
		&\iint_{\Psi_k(\La,r_1)}\left(H(z,|\na u|)_k\right)^{1-\theta}H(z,|\na u|)^\theta\,dz\\
		&\le c\La^{1-\theta}\iint_{\Psi_k(\La,r_2)}H(z,|\na u|)^\theta\,dz+c\iint_{\Theta(\La,2r)}H(z,|F|)\,dz\,.
	\end{split}
\end{align*}

Let $\ep\in(0,1)$ will be determined later. We multiply the inequality above by $\La^{\ep-1}$ and integrate over $(\La_1,\infty)$. Then, we get
\begin{align*}
	\begin{split}
		\mathrm{I}&=\int_{\La_1}^{\infty}\La^{\ep-1}\iint_{\Psi_k(\La,r_1)}\left(H(z,|\na u|)_k\right)^{1-\theta}H(z,|\na u|)^\theta\,dz\,d\La\\
		&\le c\int_{\La_1}^{\infty}\La^{\ep-\theta}\iint_{\Psi_k(\La,r_2)}H(z,|\na u|)^\theta\,dz\,d\La  +  c \int_{\La_1}^{\infty}\La^{\ep-1} \iint_{\Theta(\La,2r)}H(z,|F|)\,dz \, d\La \\
		&= \mathrm{II} + \mathrm{III}\,.
	\end{split}
\end{align*}
Applying Fubini's theorem to $\mathrm{I}$, it follows that
\begin{align*}
	\begin{split}
		\mathrm{I}
		&=\frac{1}{\ep}\iint_{\Psi_k(\La_1,r_1)}\left(H(z,|\na u|)_k\right)^{1-\theta+\ep}H(z,|\na u|)^\theta\,dz\\
		&\qquad-\frac{1}{\ep}\La_1^\ep\iint_{\Psi_k(\La_1,r_1)}\left(H(z,|\na u|)_k\right)^{1-\theta}H(z,|\na u|)^\theta\,dz\,.
	\end{split}
\end{align*}
Meanwhile, since we have
\begin{align*}
	\begin{split}
		&\iint_{Q_{r_1}(z_0)\setminus \Psi_k(\La_1,r_1)}\left(H(z,|\na u|)_k\right)^{1-\theta+\ep}H(z,|\na u|)^\theta\,dz\\
		&\le \La_1^{\ep}\iint_{Q_{2r}(z_0)}\left(H(z,|\na u|)_k\right)^{1-\theta}H(z,|\na u|)^\theta\,dz\,,
	\end{split}
\end{align*}
we obtain
\begin{align*}
	\begin{split}
		\mathrm{I}\ge& \frac{1}{\ep}\iint_{Q_{r_1}(z_0)}\left(H(z,|\na u|)_k\right)^{1-\theta+\ep}H(z,|\na u|)^\theta\,dz\\
		&\qquad-\frac{2}{\ep}\La_1^\ep\iint_{Q_{2r}(z_0)}\left(H(z,|\na u|)_k\right)^{1-\theta}H(z,|\na u|)^\theta\,dz\,.
	\end{split}
\end{align*}
Similarly, by Fubini's theorem, we have
\[
		\mathrm{II}
		\le\frac{1}{1-\theta+\ep}\iint_{Q_{r_2}(z_0)}\left(H(z,|\na u|)_k\right)^{1-\theta+\ep}H(z,|\na u|)^\theta \,dz
\]
and
\[
\mathrm{III}\le \frac{1}{\ep}\iint_{Q_{2r}(z_0)}H(z,|F|)^{1+\ep} \,dz\,.
\]
Combining the estimates above, we obtain
\begin{align*}
	\begin{split}
		&\iint_{Q_{r_1}(z_0)}\left(H(z,|\na u|)_k\right)^{1-\theta+\ep}H(z,|\na u|)^\theta\,dz\\
		&\le \frac{c\ep}{1-\theta+\ep}\iint_{Q_{r_2}(z_0)}\left(H(z,|\na u|)_k\right)^{1-\theta+\ep}H(z,|\na u|)^\theta \,dz\\
		&\qquad+c\La_1^\ep\iint_{Q_{2r}(z_0)}\left(H(z,|\na u|)_k\right)^{1-\theta}H(z,|\na u|)^\theta\,dz \\
        &\qquad + c\iint_{Q_{2r}(z_0)}H(z,|F|)^{1+\ep} \,dz
	\end{split}
\end{align*}
for $c=c(\data)$ and $\theta=\theta(\data)\in(0,1)$. We choose $\ep_0=\ep_0(\data)\in(0,1)$ so that for any $\ep\in(0,\ep_0)$,
\[
	\frac{c\ep}{1-\theta+\ep}\le\frac{1}{2}\,.
\]
Then, there holds
\begin{align*}
	\begin{split}
		&\iint_{Q_{r_1}(z_0)}\left(H(z,|\na u|)_k\right)^{1-\theta+\ep}H(z,|\na u|)^\theta\,dz\\
		&\le \frac{1}{2}\iint_{Q_{r_2}(z_0)}\left(H(z,|\na u|)_k\right)^{1-\theta+\ep}H(z,|\na u|)^\theta \,dz\\
		&\qquad+c\La_1^\ep\iint_{Q_{2r}(z_0)}\left(H(z,|\na u|)_k\right)^{1-\theta}H(z,|\na u|)^\theta\,dz \\
        &\qquad+ c\iint_{Q_{2r}(z_0)}H(z,|F|)^{1+\ep} \,dz\,.
	\end{split}
\end{align*}
Recalling $r\le r_1<r_2\le 2r$, we conclude from Lemma~\ref{tech_lem} that
\begin{align*}
	\begin{split}
		&\iint_{Q_{r}(z_0)}\left(H(z,|\na u|)_k\right)^{1-\theta+\ep}H(z,|\na u|)^\theta\,dz\\
		&\le c\La_0^\ep\iint_{Q_{2r}(z_0)}\left(H(z,|\na u|)_k\right)^{1-\theta}H(z,|\na u|)^\theta\,dz\\
        &\qquad+ c\iint_{Q_{2r}(z_0)}H(z,|F|)^{1+\ep} \,dz\,.
	\end{split}
\end{align*}
Letting $k$ go to $\infty$ and recalling \eqref{def_la} and $\eqref{def_La}$, we have 
\begin{align*}
    \begin{split}
        &\fiint_{Q_{r}(z_0)}H(z,|\na u|)^{1+\epsilon}\,dz\\
		&\le c\left(\frac{\|u\|^p_{\infty}}{r^p} +\|a\|_{\infty} \frac{\|u\|^q_{\infty}}{r^q}  + 
 \left( \fiint_{Q_{4r}(z_0)}H(z,|F|)\,dz \right)^\frac{p}{2} +1\right)^{\frac{q\epsilon}{p}}\\
        &\qquad\times \iint_{Q_{2r}(z_0)} H(z,|\na u|) \,dz+ c\fiint_{Q_{2r}(z_0)}H(z,|F|)^{1+\ep} \,dz\,,
    \end{split}
\end{align*}
where we abbreviated $\| \cdot \|_{\infty}= \| \cdot \|_{L^\infty(\Om_T)}$.
Finally, applying Lemma~\ref{caccio_lem} with $Q_{2r}(z_0)$ and $Q_{4r}(z_0)$, we obtain
\begin{align*}
    \begin{split}
        &\fiint_{Q_{r}(z_0)}H(z,|\na u|)^{1+\epsilon}\,dz\\
		&\le c\left(\frac{\|u\|^p_{\infty}}{r^p} +\|a\|_{\infty} \frac{\|u\|^q_{\infty}}{r^q}  + 
 \left( \fiint_{Q_{4r}(z_0)}H(z,|F|)\,dz \right)^\frac{p}{2} +1\right)^{\frac{q\epsilon}{p}}\\
        &\qquad\times \left(\frac{\|u\|^p_{\infty}}{r^p} +\|a\|_{\infty} \frac{\|u\|^q_{\infty}}{r^q} + \frac{\|u\|^2_{\infty}}{r^2}  +\fiint_{Q_{4r}(z_0)}H(z,|F|)\,dz \right)\\
        &\qquad+ c\fiint_{Q_{2r}(z_0)}H(z,|F|)^{1+\ep} \,dz\,,
    \end{split}
\end{align*}
where $c=c(\data,\|a\|_{L^\infty(\Om_T)})$.
Since $p\ge2$, we apply Young's inequality to the term with exponent $2$ on the right hand side in order to have
\begin{align*}
    \begin{split}
        \fiint_{Q_{r}(z_0)}H(z,|\na u|)^{1+\epsilon}\,dz
	&\le c\left(\frac{\|u\|^p_{\infty}}{r^p} +\|a\|_{\infty}\frac{\|u\|^q_{\infty}}{r^q}  +1\right)^{1+\frac{q\epsilon}{p}} \\
    &\qquad+ c\left( \fiint_{Q_{4r}(z_0)}H(z,|F|)^{1+\ep} \,dz\right)^{1+\frac{q}{2}}\,.
    \end{split}
\end{align*}
The proof is completed. \qed

\bibliography{parabolic_double_phase_bib}
\bibliographystyle{abbrv}

\end{document}